
\documentclass[12pt]{amsart}

\usepackage[utf8]{inputenc}
\usepackage{amsmath, amsfonts, amssymb}
\usepackage{enumerate}
\usepackage{fullpage}
\usepackage{tikz-cd}
\usetikzlibrary{positioning,arrows}
\usetikzlibrary{decorations.markings}
\tikzset{%
  symbol/.style={
    draw=none,
    every to/.append style={
      edge node={node [sloped, allow upside down, auto=false]{$#1$}}
    },
  },
}

\usepackage[right]{lineno}

\usepackage{xcolor}

\def\black{\color{black}}


\numberwithin{subsection}{section}
\numberwithin{equation}{section}
\newcommand{\numberby}{equation}

\newtheorem{theorem}[\numberby]{Theorem}
\newtheorem{lemma}[\numberby]{Lemma}
\newtheorem{proposition}[\numberby]{Proposition}
\newtheorem{corollary}[\numberby]{Corollary}



\newenvironment{dremark}[1]{\refstepcounter{\numberby}%
\par \vskip 6pt  \noindent {\bf #1\ \thelemma .}}{\vskip 6pt \par}

\newenvironment{dremark*}[1]{\par \vskip 5pt \noindent 
{\bf #1.}}{\vskip 5pt \par}

\newcommand{\FLEX}{\relax}
\newcommand{\flex}[1]{\renewcommand{\FLEX}{#1}}
\newtheorem{flexthm}[\numberby]{\FLEX}
\newenvironment{flexstate}[2]{\flex{#1}\begin{flexthm}[#2]}{\end{flexthm}}

\theoremstyle{definition}
\newtheorem{remark}[\numberby]{Remark}
\newtheorem{definition}[\numberby]{Definition}
\newtheorem{example}[\numberby]{Example}
\newtheorem{notation}[\numberby]{Notation}

\newcommand{\A}{A}
\newcommand{\B}{B}
\newcommand{\C}{C}
 \newcommand{\D}{D}

\renewcommand{\H}{{\mathcal{H}}}

\newcommand{\bh}{\mathcal B(\H)}

\newcommand{\cstar}{\hbox{$C^*$}}
\newcommand{\ca}{\cstar}
\newcommand{\caop}{\cstar}
\newcommand{\cstaralg}{\cstar-algebra}
\newcommand{\cstarred}{C^*_r}

\newcommand{\iip}{ideal intersection property}
\newcommand{\riip}{regular ideal intersection property}
\newcommand{\inv}{(\textsc{inv})}
\newcommand{\inverse}{^{-1}}
\newcommand{\idealin}{\unlhd}
\newcommand{\dperp}{{\perp\perp}}

\DeclareMathOperator{\spn}{span}
\DeclareMathOperator{\reginv}{RegInv}
\DeclareMathOperator{\reg}{Reg}

\newcommand{\3}{\{1,2,3\}}
\newcommand{\aug}[1]{#1^\blacktriangle}

\newcommand{\dstext}[1]{\quad\text{#1}\quad}
\newcommand{\eps}{\ensuremath{\varepsilon}}

\providecommand{\idealin}{\unlhd}

\newcommand{\norm}[1]{\left\|{#1}\right\|}

\providecommand{\qed}%
{\hfill \vrule height5pt width4pt depth1pt \vspace{+2.00ex}}

\newcommand{\supp}{\operatorname{supp}}

\newcommand{\bbC}{{\mathbb{C}}}

\newcommand{\bbE}{{\mathbb{E}}}

\newcommand{\bbN}{{\mathbb{N}}}

\newcommand{\bbT}{{\mathbb{T}}}

\newcommand{\bbZ}{{\mathbb{Z}}}

\renewcommand{\H}{{\mathcal{H}}}

  \newcommand{\N}{{\mathcal{N}}}

  \newcommand{\U}{{\mathcal{U}}}

\newcommand{\fF}{{\mathfrak{F}}}

\newcommand{\fix}[1]{\operatorname{fix} #1}
\newcommand{\intfix}[1]{(\operatorname{fix} #1)^\circ}

\newcommand{\Iso}{\operatorname{Iso}}

\newcommand{\coexp}{\bbE}
\newcommand{\emb}{w}

\renewcommand{\theenumi}{\alph{enumi}}

\title{Regular ideals, ideal intersections, and quotients II}
\author[J.H. Brown]{Jonathan H. Brown}
\address[J.H. Brown]{
Department of Mathematics\\
University of Dayton\\
Dayton\\
OH 45469-2316\  U.S.A.} \email{jonathan.henry.brown@gmail.com}

\author[A.H. Fuller]{Adam H. Fuller}
\address[A.H. Fuller]{
Department of Mathematics\\
Ohio University\\
Athens\\
OH 45701\  U.S.A.}
\email{fullera@ohio.edu}

\author[D.R. Pitts]{David R. Pitts}
\address[D.R. Pitts]{
  Department of Mathematics\\
  University of Nebraska-Lincoln\\
  Lincoln\\
  NE 68588\ 
  U.S.A.}  \email{dpitts2@unl.edu}

\author[S.A. Reznikoff]{Sarah A. Reznikoff}
\address[S.A. Reznikoff]{
Department of Mathematics\\
Virginia Tech\\
Blacksburg, VA 24061-1026\  U.S.A. }
\email{reznikoff@vt.edu}

\subjclass[2020]{46L05}

\hyphenation{pseudo-ex-pec-ta-tion}

\begin{document}

\begin{abstract}
  Let $B \subseteq A$ be a regular inclusion of
  \cstaralg s satisfying the ideal intersection property and with a
  faithful invariant pseudo-expectation.  A complete description of
  the regular ideals of $A$ is given using the invariant regular
  ideals of $B$ and the pseudo-expectation.  Further, necessary and
  sufficient conditions are given for a quotient by a regular ideal to
  preserve the faithful unique pseudo-expectation property.

Special attention is given throughout to pseudo-Cartan inclusions,
i.e.\ regular inclusions with the faithful unique pseudo-expectation
property, equivalently, having a Cartan envelope.  We show that the quotient
of a pseudo-Cartan inclusion by a regular ideal is again a
pseudo-Cartan inclusion, and we describe the Cartan envelope of the
quotient. 
\end{abstract}

\maketitle

\section{Introduction}
Let $B\subseteq A$ be an inclusion of $\ca$-algebras.  In our previous
paper,~\cite{BFPR24}, we showed that under suitable hypotheses, there
are isomorphisms between the lattices of 
regular ideals of $A$ and  regular ideals of $B$.  The conditions
imposed there involved conditional expectations that satisfy an
invariance hypothesis, and particular attention was paid to Cartan
inclusions $B\subseteq A$.

However,  inclusions $\B\subseteq \A$
for which there is no conditional expectation of $\A$ onto $\B$ arise quite
naturally, see~\cite[Example 1.2]{Pit17}.   Inspired by this example, let $X$ be a connected, compact
Hausdorff space and $\Gamma$  a discrete abelian group acting on $X$
via homeomorphisms; denote the relative commutant of $C(X)$ in the
crossed product $C(X)\rtimes\Gamma$ by $C(X)^c$. Then $C(X)^c$ is a
regular and maximal 
abelian \cstar-subalgebra of $C(X)\rtimes \Gamma$. 
If there exists $s\in \Gamma$ such that
$\emptyset \subsetneq (\fix s)^\circ\subsetneq X$, where $\fix s$ is
the set of fixed points for the homeomorphism of $X$ corresponding to
$s$,  Proposition~\ref{noCEfam} below shows there is no conditional
expectation $\Delta:C(X)\rtimes\Gamma\rightarrow C(X)^c$. Other
examples of regular inclusions $\B\subseteq \A$ having no conditional
expectation arise when considering \cstaralg s of higher rank graphs,
see Example~\ref{2grExPre} below.

In the present article, we extend some of
the results from \cite{BFPR24} to settings where conditional
expectations may not be present. 
As a replacement for a conditional expectation, we use
pseudo-expectations.  Hamana showed that every $\ca$-algebra $A$ can
be rigidly embedded into an injective $\ca$-algebra $I(A)$, an injective
envelope for $A$ \cite{Ham79}.  Let $A$ be a $\ca$-algebra containing
a $\ca$-algebra $B$ and let $\iota$ be a rigid embedding of $B$ into its
injective envelope $I(B)$.  A pseudo-expectation is a completely contractive
positive map $\Phi \colon A \rightarrow I(B)$ which extends $\iota$.
Pseudo-expectations were introduced by Pitts as generalizations of
conditional expectations \cite{Pit17}.  In recent years, injective
envelopes and pseudo-expectations have played an important role in the
study of $C^*$-algebras and their ideal structure.  Examples include
Kalantar and Kennedy's classification of $\ca$-simple groups
\cite{KalKen17}; Kennedy and Schafhauser's work on crossed products
\cite{KenSch19}; the work of Borys \cite{Borys2020} and
Kennedy-Kim-Li-Raum-Ursu \cite{KennedyGroupoid} on groupoid
$\ca$-algebras; and the work of Pitts \cite{Pit17, Pit21} and
Pitts-Zarikian \cite{PitZar15} on regular inclusions of
$\ca$-algebras.

We shall pay particular attention to pseudo-Cartan inclusions, which
are regular inclusions $\D\subseteq\A$ with $\D$ abelian such that there is
a unique pseudo-expectation $E:\A\rightarrow I(\D)$ which is also
faithful.  (In general, there are many pseudo-expectations for an
inclusion.)  The notion of pseudo-Cartan inclusion is a natural
generalization of of Cartan inclusion: every Cartan inclusion is a
pseudo-Cartan inclusion, and a pseudo-Cartan inclusion is a Cartan
inclusion if and only if the pseudo-expectation ``is'' a conditional
expectation in the sense of
\cite[Proposition~2.3.19]{Pit25}. Furthermore, a regular inclusion
$\D\subseteq\A$ with $\D$ abelian is a pseudo-Cartan inclusion if and only
if $\D\subseteq\A$ can be minimally and regularly embedded into a Cartan
inclusion, i.e.\ has a Cartan envelope, see
~\cite[Theorem~3.27]{Pit25}.  The examples mentioned in the second
paragraph are pseudo-Cartan inclusions.  Also,
\cite[Proposition~2.4.4]{Pit25} shows that an inclusion
$\D\subseteq \A$ with $\A$ is abelian is a pseudo-Cartan inclusion if
and only if $\D\subseteq \A$ has the \iip; in particular, the example
found in~\cite[Section~5]{Exel23} is a pseudo-Cartan inclusion.

Regular ideals were introduced by Hamana in his work on monotone
completions of $\ca$-algebras.  Restricting the
one-to-one correspondence between the open sets of the primitive ideal
space $\mathrm{Prim}(A)$ and the ideals of $A$ to the complete Boolean
algebra of regular open sets of $\mathrm{Prim}(A)$ produces the
regular ideals of $A$, i.e., ideals $J \unlhd A$ satisfying
$J = J^{\perp\perp}$. Hamana further showed  there is a Boolean
isomorphism between the regular ideals of $A$ and the projection
lattice of $Z(I(A))$, where $Z(I(A))$  is the center of $I(A)$ \cite{Ham81, Ham82}. 

This
connection between regular ideals and an injective envelope was
exploited by Pitts and Zarikian to show that certain nice properties
are preserved when taking quotients by regular ideals
\cite{PitZar15}.  Explicitly, Theorem 3.8 of \cite{PitZar15} says that
if $D$ is an abelian $\ca$-subalgebra of a $\ca$-algebra $A$, $J$ is
an ideal of $A$ such that $D\cap J$ is a regular ideal of $D$, and
there is a unique pseudo-expectation $\Phi \colon A \rightarrow I(D)$,
then the inclusion $D/(J\cap D) \subseteq A/J$ also has a unique
pseudo-expectation.  In \cite[Example~4.3]{PitZar15}, Pitts and
Zarikian give an example where the unique pseudo-expectation is
faithful, but the unique pseudo-expectation in the quotient is not
faithful.

Theorem 3.8 of \cite{PitZar15} was the initial impetus for our study
of regular ideals (and quotients thereof) in inclusions of
$\ca$-algebras.  We began by considering regular ideals in graph
algebras in \cite{BFPR22}.  Let $\ca(E)$ be a graph $\ca$-algebra and
let $D$ be the algebra in $\ca(E)$ generated by the range projections
of the partial isometries in the Cuntz-Krieger family generating
$\ca(E)$.  In \cite{BFPR22} we give a complete description of the
gauge invariant regular ideals of $\ca(E)$ in terms of the directed
graph $E$.  Further, we showed that if $D$ is a Cartan subalgebra of
$\ca(E)$ (i.e. if $E$ is satisfies condition (L) of
\cite{KumPasRaeRen97}) and $J$ is a regular ideal of $\ca(E)$, then $\ca(E)/J$ is a graph algebra and
$D/(J\cap D)$ is a Cartan subalgebra of $\ca(E)/J$.  These results
were generalized to $k$-graph $\ca$-algebras by Schenkel \cite{Sch22}
and more general graph $\ca$-algebras by Va\v{s} \cite{Vas23};
algebraic analogues of these results are given for Leavitt-path
algebras by Gon\c{c}alves-Royer \cite{GonRoy22} and Va\v{s}
\cite{Vas23} and for Kumjian-Pask algebras by Schenkel \cite{Sch22}.

We generalized the graph algebra results of \cite{BFPR22} to arbitrary
Cartan inclusions $D \subseteq A$ in \cite{BFPR24}. We showed that if
$J$ is a regular ideal in $A$ then $D/(J\cap D)$ is a Cartan
subalgebra of $A/J$.  In particular, these results say that when $D$
is a Cartan subalgebra of $A$ then the quotient by a regular ideal of
$A$ preserves not just the unique pseudo-expectation property (as
guaranteed by \cite{PitZar15}), but the \emph{faithful} unique
pseudo-expectation property.  Both \cite{BFPR22} and \cite{BFPR24}
deal with regular ideals in inclusions with a faithful conditional
expectation, with particular emphasis on Cartan inclusions.

Many results in \cite{BFPR24} apply to settings beyond Cartan
inclusions.  For example, given the inclusion $(\A,\B)$,
\cite[Theorem~3.24]{BFPR24} shows that under suitable hypotheses,
including the existence of a faithful conditional expectation
$E: \A\rightarrow \B$, the map $J \mapsto J \cap B$ is a Boolean
isomorphism from the Boolean algebra of regular ideals of $A$ to the
invariant regular ideals of $B$; this is a key result of
\cite{BFPR24}.  Exel generalized this result in \cite{Exel23}.  A
special case of Exel's result is that the map $J \mapsto J \cap B$ can
be shown to be a Boolean isomorphism without the assumption that there
is a faithful conditional expectation (or any conditional expectation)
from $A$ to $B$.  However, the additional hypothesis in \cite{BFPR24}
of a conditional expectation has an advantage: in
\cite[Theorem~3.24]{BFPR24} we give an explicit description of the
inverse of the map $J \mapsto J \cap D$ using a faithful conditional
expectation.  This has implications for quotients of inclusions by
regular ideals (see \cite[Theorem~4.2]{BFPR24}), and in particular
leads to \cite[Theorem~4.8]{BFPR24}: if $(A,D)$ is a Cartan inclusion
and $J\idealin A$ is a regular ideal, then $(A/J, D/(D\cap J))$ is
again a Cartan inclusion.

In this article, we focus on the regular ideals in relation to
pseudo-expectations.  In Theorem~\ref{thm: 1-1 reg ideals pseudo}, we
describe the inverse of the map $J \mapsto J \cap B$ on regular ideals
$J \unlhd A$ for certain regular inclusions of $\ca$-algebras $B
\subseteq A$ satisfying the \iip\ and having a faithful
pseudo-expectation.  This result generalizes
\cite[Theorem~3.24]{BFPR24}.  We do not assume $B$ is abelian.
However, pseudo-Cartan inclusions are an interesting setting where $B$
is abelian, and the hypotheses of Theorem~\ref{thm: 1-1 reg ideals
pseudo} are satisfied for pseudo-Cartan inclusions.

Let $D \subseteq A$ be pseudo-Cartan, and let $D_1 \subseteq A_1$ be a
Cartan envelope, via the embedding $\alpha \colon A \rightarrow A_1$.
In Theorem~\ref{thm: all latice isomorphisms}, we show that there is a
Boolean isomorphism between the regular ideals of $A$ and the regular
ideals of $A_1$.  The key step is to show that the set of
invariant regular ideals of $D$ and  the set of invariant
regular ideals of $D_1$ are isomorphic.

In Section~\ref{sec: quotients}, we address quotients by regular
ideals.  Previous results in \cite{PitZar15, BFPR22, Sch22, BFPR24} explore
settings where quotients by regular ideals preserve the faithful unique
pseudo-expectation property.  In Theorem~\ref{thm: when quot has
  faithful}, we give necessary and sufficient conditions for when a 
quotient by a regular ideal preserves the faithful unique
pseudo-expectation property.  That is, we give necessary and
sufficient conditions for a version of \cite[Theorem~3.8]{PitZar15}
for \emph{faithful} unique pseudo-expectations.  We apply this result
to the two main types of inclusions of interest: regular inclusions
(Corollary~\ref{cor: when quot has faithful}) and inclusions of
abelian $\ca$-algebras (Corollary~\ref{cor: abelian quotients}).
Finally, we return to pseudo-Cartan inclusions.  A main result,
Theorem~\ref{thm: pseudo cartan quotient}, shows that the quotient of
a pseudo-Cartan
inclusion by a regular ideal is again a pseudo-Cartan inclusion, and the Cartan
envelope of the quotient is the quotient
of the Cartan envelope of the original pseudo-Cartan inclusion. This
considerably generalizes~\cite[Theorem~4.8]{BFPR24}.

In Section~\ref{Sec:RCP}, we consider reduced crossed products of $C(X)$ by a
discrete group $\Gamma$.  When $X$ is a connected and compact
Hausdorff space, we give a dynamical characterization of when
$C(X)^c\subseteq C(X)\rtimes_r\Gamma$ is a Cartan inclusion, where
$C(X)^c$ is the relative commutant of $C(X)$ in $C(X)\rtimes_r
\Gamma$, see
Theorem~\ref{connect}.  Since $C(X)^c\subseteq C(X)\rtimes_r\Gamma$
is always a pseudo-Cartan subalgebra, Theorem~\ref{connect} gives a
means for constructing a large class of pseudo-Cartan inclusions which
are not Cartan inclusions.

Finally, Section~\ref{Sec:Examples} is devoted to a number of examples
arising from reduced crossed products and higher-rank graph \cstaralg s,
which illustrate our results.

\section{Preliminaries on inclusions of $C^*$-algebras}\label{Sec:
prelim}

By an \emph{inclusion of $\ca$-algebras} we mean a $\ca$-algebra $A$
and a $\ca$-subalgebra $B \subseteq A$ such that $B$ contains an
approximate unit for $A$.  (We caution the reader that the approximate
unit hypothesis for inclusions is not always found in the literature: for example,
this is not a blanket assumption in~\cite{Pit25}.) We will denote an inclusion $B \subseteq A$
by $(A,B)$ (with the largest algebra first).  An element $n \in A$ is
a \emph{normalizer} of $B$ if $nB n^*,\ n^*Bn \subseteq B$.  Given an
inclusion $(A, B)$ we denote by $N(A,B)$ the collection of all
normalizers of $B$ in $A$.  If $N(A,B)$ span a dense subset of $A$ we
say that the inclusion $(A,B)$ is a \emph{regular inclusion of
$\ca$-algebras}.  If $N \subseteq N(A,B)$ is a $*$-semigroup such that
the span of $N$ is dense in $A$, then we call $N$ a \emph{generating
$*$-semigroup} for the regular inclusion $(A,B)$.

The assumption that $\B$ contains an approximate unit for $\A$ is
strong enough to ensure regularity of inclusions in the abelian case.

\begin{lemma}\label{abel+AUP} If $(\A,\B)$ is  an inclusion of
  \cstaralg s and 
  $\A$ is abelian, then $(\A,\B)$ is regular.
\end{lemma}
\begin{proof} 
 Let $(e_\lambda)$ be an approximate unit for $\A$ which is contained in $\B$.
If $\A$ is unital, then  $I_\A=\lim_\lambda I_\A e_\lambda
=\lim_\lambda e_\lambda$, whence $I_\A\in \B$.  On the other
  hand, if $\B$ is unital, then 
  $e_\lambda\rightarrow I_\B$, and it follows that $I_\B$ is
  the unit for $\A$.  If either of these cases hold, both $\A$ and
  $\B$ are unital and $I_\A=I_\B$.  Since every unitary in $\A$
  normalizes $\B$ and every element of $\A$ is a linear combination of
  four unitaries, $(\A,\B)$ is regular.

  Thus we may as well assume that both $\A$ and $\B$ are non-unital.
  Let $a\in \A$.  Since $\B$ contains an approximate unit for $\A$,
  given $\eps>0$, there exists $0\leq b\in \B$ with $\norm{b}\leq 1$
  and $\norm{ab-a}<\eps$.  We may find
  $\{(x_j,\lambda_j)\}_{j=1}^4\subseteq \U(\tilde \A)$ and scalars
  $c_j$ such that
 \begin{equation}\label{abel+AUP0}
   (a,0)=\sum_{j=1}^4 c_j(x_j,\lambda_j).
 \end{equation}
As $(x_j,\lambda_j)$ is a unitary in $\tilde \A$,   $(x_j,
\lambda_j)(b,0)\in N(\tilde\A,\tilde\B)$, 
hence \[n_j:=x_jb+\lambda_j b\in N(\A,\B).\]
Thus,
 \[\norm{a-\sum_{j=1}^4
       c_jn_j}=\norm{(a,0)-\sum_{j=1}^4c_j
     (x_j,\lambda_j)(b,0)}=\norm{a-ab}<\eps,\] showing that $a\in
 \overline{\spn}\, N(\A,\B)$.  Thus $(\A,\B)$ is regular.
\end{proof}

Hamana \cite{Ham79} showed that any unital $\ca$-algebra $A$ has a
(unique) \emph{injective envelope}.  That is, there is an 
injective $\ca$-algebra $I(A)$ and a $*$-monomorphism $\iota:
\A\rightarrow I(\A)$ such that whenever $\phi: I(\A)\rightarrow I(\A)$
is a unital and completely positive map satisfying
$\phi\circ\iota=\iota$, then $\phi$ is the identity map on $I(\A)$.  When $A$ is
a non-unital $C^*$-algebra, an injective envelope for $A$ is defined
to be an injective envelope for the unitization of $A$, $\tilde{A}$.
We will denote an injective envelope for $A$ by $(I(A), \iota)$.  We
refer the reader to \cite{Ham79} and \cite[Section~2.3]{Pit25} for
more details on the definition of the injective envelope and the
precise uniqueness statement.

Let $(I(A),\iota)$ be an injective envelope for a $\ca$-algebra $A$.
If $C \subseteq A$ is a hereditary $\ca$-subalgebra, then by
\cite[Lemma~1.1]{Ham81}, there is a unique projection $p \in I(A)$ such that
$(pI(A)p, \iota|_C)$ is an injective envelope for $C$.  We call the
projection $p$ the \emph{structure projection} for $C$.  
When $C$ is an ideal of $A$, the structure projection $p$ is a central
projection in $I(A)$.

Let $(A,B)$ be an inclusion of $\ca$-algebras, and $(I(B),\iota)$ be
an injective envelope for $B$.  A
normalizer $n \in N(A,B)$ determines partial dynamics on $B$ and also
on the injective envelope $I(B)$.  We show this in Lemma~\ref{lem:
theta maps} below.  Lemma~\ref{lem: theta maps} is a direct
generalization of \cite[Lemma~2.1]{Pit17} and can be seen as a
``point-free" version of \cite[Proposition~1.6]{Kum86}.  We begin with
a preliminary lemma.

\begin{lemma}[{cf.~\cite[Proposition~II.3.4.2]{Blackbook}}]\label{lem:
hereditary subalg} Let $(A,B)$ be an inclusion of $\ca$-algebras and
let $n \in N(A,B)$ be a normalizer.  Then $\overline{nBn^*}$ is a
hereditary $\ca$-subalgebra of $B$ and $\overline{nBn^*} =
\overline{|n^*| B |n^*|}$.
\end{lemma}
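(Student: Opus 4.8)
The plan is to verify the two assertions in turn, first extracting the basic algebraic facts forced by the normalizer axioms $nBn^*\subseteq B$ and $n^*Bn\subseteq B$, and then identifying $\overline{nBn^*}$ as the hereditary subalgebra of $B$ generated by $nn^*$. First I would record the elementary consequences. Since $B$ contains an approximate unit $(e_\lambda)$ for $A$, the elements $n e_\lambda n^*\in nBn^*\subseteq B$ converge to $nn^*$, so $nn^*\in B$; likewise $n^*n\in B$, and hence $|n^*|=(nn^*)^{1/2}\in B$ by functional calculus. Next I would check that $\overline{nBn^*}$ is a $\ca$-subalgebra of $B$: it is self-adjoint because $(nbn^*)^*=nb^*n^*$, and closed under multiplication because $(nb_1n^*)(nb_2n^*)=n\,b_1(n^*n)b_2\,n^*$ with $b_1(n^*n)b_2\in B$ (using $n^*n\in B$). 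Heredity then follows from the criterion $CBC\subseteq C$: for $b_1,b_2,b\in B$ one has $(nb_1n^*)\,b\,(nb_2n^*)=n\,b_1(n^*bn)b_2\,n^*\in nBn^*$, where $n^*bn\in B$ is precisely the second normalizer condition; passing to closures gives $\overline{nBn^*}\,B\,\overline{nBn^*}\subseteq\overline{nBn^*}$.

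For the equality, the easy inclusion is to observe that $\overline{|n^*|B|n^*|}=\overline{(nn^*)^{1/2}B(nn^*)^{1/2}}$ is, by the standard theory of hereditary subalgebras (\cite[Proposition~II.3.4.2]{Blackbook}), the hereditary subalgebra of $B$ generated by $nn^*$; more precisely $\overline{aBa}=\overline{a^{1/2}Ba^{1/2}}$ for the positive element $a=nn^*\in B$. Since $nn^*\in\overline{nBn^*}$ and $\overline{nBn^*}$ is hereditary, it must contain this generated hereditary subalgebra, yielding $\overline{|n^*|B|n^*|}\subseteq\overline{nBn^*}$.

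The reverse inclusion is the main obstacle, and it is here that I would concentrate the work. The point is to recover $n$ from $|n^*|$ using only multipliers that are functions of $nn^*$. Setting $c_\epsilon=(nn^*)^{1/2}(nn^*+\epsilon)^{-1}$, a continuous function of $nn^*\in B$ vanishing at $0$ and hence lying in $B$, one has $|n^*|c_\epsilon=(nn^*)(nn^*+\epsilon)^{-1}$, so that $n-|n^*|c_\epsilon n=\epsilon(nn^*+\epsilon)^{-1}n$. A resolvent estimate then gives $\|n-|n^*|c_\epsilon n\|^2=\|(nn^*)\,\epsilon^2(nn^*+\epsilon)^{-2}\|\le \epsilon/4\to0$, so $n=\lim_\epsilon|n^*|c_\epsilon n$ in norm. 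Consequently, for any $b\in B$, $nbn^*=\lim_\epsilon|n^*|\,(c_\epsilon\,nbn^*\,c_\epsilon^*)\,|n^*|$, and $c_\epsilon\,nbn^*\,c_\epsilon^*\in B$ because $nbn^*\in B$ and $c_\epsilon\in B$; hence $nbn^*\in\overline{|n^*|B|n^*|}$.

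Taking closures yields $\overline{nBn^*}\subseteq\overline{|n^*|B|n^*|}$, and combining the two inclusions completes the proof. The only genuinely delicate point is the approximation $n=\lim_\epsilon|n^*|c_\epsilon n$, which recovers $n$ as $|n^*|$ times elements of $B$ applied to $n$; everything else is bookkeeping with the normalizer axioms, approximate units, and the standard description of hereditary subalgebras generated by a positive element.
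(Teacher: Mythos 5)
Your proof is correct, and in substance it travels the same road as the paper's: both arguments hinge on the observation that the approximate-unit hypothesis forces $nn^*, n^*n\in B$, both use the normalizer identity $(nb_1n^*)b(nb_2n^*)=nb_1(n^*bn)b_2n^*$, and both obtain the hard inclusion $nBn^*\subseteq\overline{|n^*|B|n^*|}$ by sandwiching $nbn^*$ between functions of $nn^*$ vanishing at $0$. The organization, however, is reversed in an instructive way. The paper never verifies heredity of $\overline{nBn^*}$ directly: it proves the equality first — the inclusion $|n^*|B|n^*|\subseteq\overline{nBn^*}$ from $|n^*|=(nn^*)^{1/2}\in\overline{nBn^*}$, and the reverse from the limit $nbn^*=\lim_k(nn^*)^{1/k}\,nbn^*\,(nn^*)^{1/k}=\lim_k|n^*|^{2/k}\,nbn^*\,|n^*|^{2/k}$ — and only then deduces heredity by citing the standard fact that $\overline{aBa}$ is hereditary for positive $a$ (\cite[Corollary~3.2.4]{MurphyBook}). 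You instead establish heredity of $\overline{nBn^*}$ first, directly from the criterion $CBC\subseteq C$, and then get the easy inclusion from the minimality of the hereditary subalgebra generated by $nn^*$; your resolvent device $c_\epsilon=(nn^*)^{1/2}(nn^*+\epsilon)^{-1}$, with the quantitative bound $\|n-|n^*|c_\epsilon n\|^2\le\epsilon/4$, plays the role of the paper's fractional powers $(nn^*)^{1/k}$. A small point in your favor: your sandwich $|n^*|\,(c_\epsilon\,nbn^*\,c_\epsilon)\,|n^*|$ lands literally in $|n^*|B|n^*|$, whereas the paper's expression $|n^*|^{2/k}\,nbn^*\,|n^*|^{2/k}$ still requires the (implicit, routine) functional-calculus step that $|n^*|^{2/k}B|n^*|^{2/k}\subseteq\overline{|n^*|B|n^*|}$; your version is also self-contained on heredity, at the cost of invoking the $\overline{aBa}=\overline{a^{1/2}Ba^{1/2}}$ description from \cite{Blackbook} for the easy inclusion, which the paper's direct containment avoids.
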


\begin{proof} Note that $nn^*, n^*n \in B$ by the assumption that $B$
contains an approximate unit for $A$.  Thus $|n^*| = (nn^*)^{1/2} \in
\overline{nBn^*}$, and so $|n^*| B |n^*| \subseteq \overline{nBn^*}.$
Further, for any $b \in B$,
\begin{align*} nbn^* & = \lim_k (nn^*)^{1/k} nbn^* (nn^*)^{1/k} \\ &=
\lim_k |n^*|^{2/k} nbn^* |n^*|^{2/k}.
\end{align*} Thus $nBn^* \subseteq \overline{ |n^*| B |n^*|}$.  It
follows that $\overline{nBn^*} = \overline{ |n^*| B |n^*|}$ and is
thus a hereditary $\ca$-subalgebra of $B$, by
\cite[Corollary~3.2.4]{MurphyBook}.
\end{proof}

\begin{lemma}\label{lem: theta maps} Suppose $(A,B)$ is an inclusion of
$\ca$-algebras and  $(I(B),\iota)$ is an injective envelope for
$B$.  Let $n \in N(A,B)$ be a normalizer and let $p,q \in I(B)$ be the
structure projections for the hereditary $\ca$-subalgebras
$\overline{nBn^*}$ and $\overline{n^*B n}$ respectively.

  Then the map $|n^*|\, b\, |n^*| \mapsto n^*bn$  is well-defined and uniquely
 extends to isomorphisms
$$\theta_n \colon \overline{nBn^*} \rightarrow \overline{n^*B n}\quad\text{
  and } \quad\tilde{\theta}_n \colon pI(B)p \rightarrow qI(B)q. $$
Furthermore,
\[\tilde\theta_n\circ\iota|_{\overline{nBn^*}}=\iota\circ\theta_n.\]
\end{lemma}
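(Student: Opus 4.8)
The plan is to construct $\theta_n$ by hand at the level of the hereditary subalgebras, and then to obtain $\tilde\theta_n$ as a soft consequence of the rigidity of injective envelopes. First I would record the relevant facts from Lemma~\ref{lem: hereditary subalg}: applying it to $n$ and to $n^*$ gives $\overline{nBn^*} = \overline{|n^*|B|n^*|}$ and $\overline{n^*Bn} = \overline{|n|B|n|}$, while $|n| = (n^*n)^{1/2}$ and $|n^*| = (nn^*)^{1/2}$ both lie in $B$. Note also that $|n^*|B|n^*|$ is a dense $*$-subalgebra of $\overline{nBn^*}$, since $(|n^*|b_1|n^*|)(|n^*|b_2|n^*|) = |n^*|(b_1|n^*|^2b_2)|n^*|$.

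To define $\theta_n$ I would pass to the bidual and write the polar decomposition $n = v|n|$ in $A^{**}$, where $v$ is a partial isometry with $v^*v$ the support projection of $|n|$ and $vv^*$ the support projection of $|n^*|$; in particular $|n^*| = v|n|v^*$. I would then set $\theta_n := \operatorname{Ad}(v^*)$ restricted to $\overline{nBn^*}$, i.e. $\theta_n(x) = v^*xv$. A direct computation using $|n^*| = v|n|v^*$, $v^*v|n| = |n|$, $n = v|n|$, and $n^* = |n|v^*$ gives, for $b \in B$,
\[ \theta_n(|n^*|b|n^*|) = v^*(v|n|v^*)\,b\,(v|n|v^*)v = |n|v^*bv|n| = n^*bn, \]
so $\theta_n$ does extend the prescribed map. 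Since every $x \in \overline{nBn^*}$ satisfies $vv^*\,x\,vv^* = x$ (because $vv^*$ is the support projection of $|n^*|$), conjugation by $v$ implements mutually inverse isometric $*$-isomorphisms between the corners $vv^*A^{**}vv^*$ and $v^*vA^{**}v^*v$; hence $\theta_n$ is an isometric $*$-homomorphism. This simultaneously shows that the prescribed map is well-defined (it coincides with the single-valued map $\operatorname{Ad}(v^*)$) and isometric, which is the step I expect to carry the real content of the argument—everything after it is formal. By continuity $\theta_n$ maps $\overline{nBn^*}$ into $\overline{n^*Bn}$, and its image is a closed $*$-subalgebra containing the dense set $n^*Bn$, so $\theta_n$ is onto $\overline{n^*Bn}$, with inverse $\operatorname{Ad}(v)|_{\overline{n^*Bn}} = \theta_{n^*}$. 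Uniqueness of the extension is automatic, as a $*$-homomorphism is determined on a dense $*$-subalgebra.

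It then remains to produce $\tilde\theta_n$ and its compatibility with $\iota$, and here I would invoke the rigidity of injective envelopes. By definition of the structure projections, $(pI(B)p,\ \iota|_{\overline{nBn^*}})$ is an injective envelope for $\overline{nBn^*}$ and $(qI(B)q,\ \iota|_{\overline{n^*Bn}})$ is an injective envelope for $\overline{n^*Bn}$. Transporting the latter along the isomorphism $\theta_n$ exhibits $(qI(B)q,\ \iota|_{\overline{n^*Bn}}\circ\theta_n)$ as a second injective envelope for $\overline{nBn^*}$. Since two injective envelopes of the same $\ca$-algebra are isomorphic via a unique $*$-isomorphism intertwining the embeddings \cite{Ham79}, this yields a unique $*$-isomorphism $\tilde\theta_n\colon pI(B)p \to qI(B)q$ with $\tilde\theta_n\circ\iota|_{\overline{nBn^*}} = \iota|_{\overline{n^*Bn}}\circ\theta_n = \iota\circ\theta_n$, which is exactly the asserted isomorphism together with the final displayed identity.
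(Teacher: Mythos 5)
Your proof is correct, and while it uses the same two raw ingredients as the paper's proof---a polar decomposition of $n$ in an enveloping algebra, plus Hamana's identification of $pI(B)p$ and $qI(B)q$ as injective envelopes of the hereditary subalgebras---it organizes them in a genuinely different and more economical way. The paper first establishes well-definedness of $|n^*|b|n^*| \mapsto n^*bn$ by a separate, intrinsic polynomial-approximation argument (choosing polynomials $p_k$ with $p_k(0)=0$ and $\lim_k p_k(n^*n)n^* = n^*$ to show $|n^*|b|n^*| = 0 \iff n^*bn = 0$), then uses a polar decomposition $n^* = u^*|n^*|$ in an ambient AW$^*$-algebra only for the isometry estimate $\|n^*bn\| = \||n^*|b|n^*|\|$, and finally delegates the actual construction of $\theta_n$ to \cite[Lemma~2.1]{Pit17}. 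By instead \emph{defining} $\theta_n$ as the restriction of $\mathrm{Ad}(v^*)$ (with $n = v|n|$ in $A^{**}$, a special case of the paper's AW$^*$-embedding), you get well-definedness, isometry, $*$-multiplicativity, and---after the density and closed-range observations you make---surjectivity all from the single fact that conjugation by a partial isometry is a $*$-isomorphism between the corners it links; the paper's polynomial trick becomes unnecessary, and your argument is self-contained where the paper cites \cite{Pit17}. For $\tilde\theta_n$ both proofs rest on \cite[Lemma~1.1]{Ham81}, but you spell out the step the paper compresses into ``follows likewise'': transporting the embedding along $\theta_n$ to exhibit $(qI(B)q, \iota|_{\overline{n^*Bn}}\circ\theta_n)$ as a second injective envelope of $\overline{nBn^*}$ and invoking uniqueness (rigidity) of the intertwining isomorphism, which also delivers the uniqueness assertion for $\tilde\theta_n$. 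The two points that carry the burden in your version---that $vv^*xvv^* = x$ for all $x \in \overline{nBn^*} = \overline{|n^*|B|n^*|}$ (so $\mathrm{Ad}(v^*)$ is multiplicative there), and that the image lands in $\overline{n^*Bn} \subseteq A$ even though $v$ lives only in $A^{**}$---are both handled correctly via the dense subalgebra $|n^*|B|n^*|$ and continuity, so there is no gap.
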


\begin{proof}  Fix $n\in N(A,B)$.  We first show the map $|n^*|\, b\, |n^*|
\mapsto n^*bn$ is well-defined.  To do this, we show that for $b\in
B$,
  \begin{equation*}\label{thetam1} |n^*|\, b\, |n^*| =0 \iff n^*bn=0.
  \end{equation*} Since $n^*=\lim_{j\rightarrow \infty} |n^*n|^{1/j}
n^*$, there is a sequence of polynomials $p_k$ with real coefficients
such that $p_k(0)=0$ and $\lim_k p_k(n^*n)n^* =n^*$.  Now suppose
$b\in B$ and $|n^*| b |n^*|=0$.  Multiplying on the left by
$n^*|n^*|$ gives $(n^*n)n^*b |n^*|=0$, whence
  \[n^*b|n^*|=\lim_k p_k(n^*n)n^*b|n^*|=0.\] Likewise, multiplying
$0=n^*b |n^*|$ on the right by $|n^*| n$ yields
  \[0=\lim_k n^*b np_k(n^*n)=n^*bn.\] Similar arguments give the
converse.  It follows the map $|n^*|\, b\, |n^*| \mapsto n^*bn$ is
well-defined.

  Next, assume that $A$ sits inside an AW$^*$-algebra $W$ (e.g. by
taking a faithful representation of $A$ into some $B(H)$ or by
embedding $A$ into its injective envelope $I(A)$).  Then there is a
partial isometry $u^* \in W$ so that $n^* = u^*|n^*|$ is the polar
decomposition of $n^*$.  Thus $n = |n^*|u$, $|n^*| = nu^* = un^*$, and
$u^*u n^* = n^*$.  Take any $b \in B$. Then
\begin{align*} \| n^* b n \| & = \| u^* u n^* b n u^* u \| \\ & \leq
\| u n^* b n u^* \| = \| |n^*|\, b\,  |n^*| \| \\ & \leq \| n^* b n\|.
\end{align*} Thus the map $|n^*|\, b\, |n^*| \mapsto n^* b n$ is
isometric.  By Lemma~\ref{lem: hereditary subalg}, the existence of
the map $\theta_n$ now follows as in \cite[Lemma~2.1]{Pit17}.  The
existence of the map $\tilde{\theta}_n$ follows from
\cite[Lemma~1.1]{Ham81} as $(pI(B)p, \iota|_{\overline{nBn^*}})$ and
$(qI(B)q, \iota|_{\overline{n^*Bn}})$ are injective envelopes of
$\overline{nBn^*}$ and $\overline{n^*Bn}$, respectively.   That 
$\iota\circ \theta_n=\tilde\theta_n\circ\iota|_{\overline{nBn^*}}$ follows likewise.
\end{proof}

Let $A$ and $B$ be $\ca$-algebras with $B \subseteq A$ and let
$(I(B),\iota)$ be an injective envelope for $B$.  Following
\cite{Pit17, Pit25}, we call a completely positive contractive map
$\Phi\colon A \rightarrow I(B)$ a \emph{pseudo-expectation} if
$\Phi|_B = \iota$.  That is, a completely positive contraction $\Phi$
is a pseudo-expectation if the following diagram commutes
\[
\begin{tikzcd} A \arrow[rd, "\Phi"] & \\ B \arrow[u, symbol=\subseteq]
\arrow[hook, r, "\iota"] & I(B).
\end{tikzcd}
\] A pseudo-expectation always exists by \cite[Lemma~2.3.8]{Pit25}.
When $\Phi$ is a pseudo-expectation and $\Phi(A) =\iota(B)$, we
suppress $\iota$ and simply write $\Phi \colon A \rightarrow B$.  In
this case $\Phi$ is a \emph{conditional expectation}.

Like conditional expectations, pseudo-expectations are always bimodule
maps. 

\begin{lemma}[{\cite[Proposition~2.3.11]{Pit25}}]\label{lem:
bimod} Let $(A,B)$ be an inclusion of $\ca$-algebras and let
$(I(B),\iota)$ be an injective envelope for $B$.  Let $\Phi \colon A
\rightarrow I(B)$ be a pseudo-expectation.  Then
$$\iota(b_1)\Phi(a) \iota(b_2) = \Phi(b_1ab_2)$$
for all $b_1,\ b_2 \in B$ and $a \in A$.
\end{lemma}

An inclusion $(A,B)$ of $\ca$-algebras has the \emph{ideal
intersection property} if $J \cap B \neq \{0\}$ for every non-zero
ideal $J \unlhd A$.  The following lemma, adapted from
\cite[Proposition~3.1]{PitZar15}, relates the \iip\ to
pseudo-expectations.

\begin{proposition}\label{prop: iip and faithful pseudo} Let $(A,B)$
be an inclusion of $\ca$-algebras.  If $J \unlhd A$ is an ideal such
that $J \cap B = \{0\}$, then there is a pseudo-expectation $\Phi
\colon A \rightarrow I(B)$ such that $\Phi(x^*x) = 0$ for all $x \in
J.$

  In particular, if all pseudo-expectations $\Psi \colon A \rightarrow
I(B)$ are faithful, then $(A,B)$ has the \iip.
\end{proposition}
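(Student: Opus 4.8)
The plan is to manufacture the required pseudo-expectation by pulling one back from the quotient $A/J$ along the quotient map, exploiting the fact that the hypothesis $J \cap B = \{0\}$ makes that quotient map injective on $B$. Write $\pi \colon A \to A/J$ for the quotient $*$-homomorphism. Since $\ker(\pi|_B) = J \cap B = \{0\}$, the restriction $\pi|_B$ is an injective $*$-homomorphism, hence a $*$-isomorphism of $B$ onto $\pi(B) \subseteq A/J$. Because $B$ contains an approximate unit for $A$, its image $\pi(B)$ contains an approximate unit for $A/J = \pi(A)$, so $(A/J, \pi(B))$ is an inclusion in the sense of Section~\ref{Sec: prelim}.

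Next I would match up the injective envelopes. Composing the $*$-isomorphism $\pi|_B \colon B \to \pi(B)$ with a rigid embedding $\iota' \colon \pi(B) \to I(\pi(B))$ gives a rigid embedding of $B$ into $I(\pi(B))$, so $\bigl(I(\pi(B)), \iota' \circ \pi|_B\bigr)$ is an injective envelope for $B$. By the essential uniqueness of the injective envelope, I may identify $I(\pi(B))$ with $I(B)$ in such a way that $\iota = \iota' \circ \pi|_B$. With this identification in hand, the construction is immediate: by the existence of pseudo-expectations (\cite[Lemma~2.3.8]{Pit25}) there is a pseudo-expectation $\Psi \colon A/J \to I(\pi(B)) = I(B)$ for the inclusion $(A/J, \pi(B))$, and I would set $\Phi := \Psi \circ \pi \colon A \to I(B)$.

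It then remains to verify the three defining properties and the vanishing condition, all of which are routine. As a composite of a $*$-homomorphism with a completely positive contraction, $\Phi$ is completely positive and contractive; for $b \in B$ we have $\Phi(b) = \Psi(\pi(b)) = \iota'(\pi(b)) = \iota(b)$, so $\Phi|_B = \iota$ and $\Phi$ is a pseudo-expectation for $(A,B)$; and for $x \in J$ we have $\pi(x) = 0$, whence $\Phi(x^*x) = \Psi\bigl(\pi(x)^*\pi(x)\bigr) = 0$. For the ``in particular'' statement I would argue contrapositively: if $(A,B)$ fails the \iip, choose a nonzero ideal $J \unlhd A$ with $J \cap B = \{0\}$ and a nonzero $x \in J$; the pseudo-expectation $\Phi$ just built then kills the nonzero positive element $x^*x$, so $\Phi$ is not faithful. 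Hence if every pseudo-expectation is faithful, $(A,B)$ must have the \iip.

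I expect the only genuine subtlety to lie in the second step, namely the identification of $I(\pi(B))$ with $I(B)$ compatibly with the embeddings; this is precisely where the rigidity and uniqueness of injective envelopes are needed to guarantee that the transported map $\Psi \circ \pi$ lands in $I(B)$ and restricts to $\iota$ on $B$. Once that identification is pinned down, the remaining verifications amount to the observation that pre-composition with a $*$-homomorphism preserves complete positivity, contractivity, and the bimodule behaviour recorded in Lemma~\ref{lem: bimod}.
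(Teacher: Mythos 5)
Your proof is correct, but it takes a genuinely different route from the paper's, which is much shorter: the paper defines $E \colon J + B \rightarrow B$ by $E(x+b) = b$ (well defined precisely because $J \cap B = \{0\}$; here $J+B$ is a $\ca$-subalgebra of $A$ and $E$ is in effect the $*$-homomorphism $(\pi|_B)^{-1}\circ \pi|_{J+B}$ in your notation), and then uses injectivity of $I(B)$ once to extend $\iota \circ E$ to a completely positive contraction $\Phi \colon A \rightarrow I(B)$, which by construction satisfies $\Phi|_B = \iota$ and $\Phi(x^*x)=0$ for $x \in J$. So where you pass to the quotient $A/J$, transport the injective envelope along the isomorphism $\pi|_B$ via rigidity and essential uniqueness, and then invoke the general existence of pseudo-expectations \cite[Lemma~2.3.8]{Pit25} for the inclusion $(A/J, \pi(B))$, the paper performs a single injectivity extension inside $A$ and never has to match up envelopes. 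The two arguments are close in mechanism --- your appeal to \cite[Lemma~2.3.8]{Pit25} is itself an injectivity extension, applied in $A/J$ rather than in $A$ --- but your version makes explicit the structural fact that the pseudo-expectation can be chosen to factor through $A/J$ (in the paper's construction this follows only a posteriori, via the Schwarz inequality, from the vanishing of $\Phi$ on positive elements of $J$), and it previews the quotient techniques of Section~\ref{sec: quotients}; the paper's version buys brevity and sidesteps the envelope-identification step, which is the one place in your argument where genuine care is needed (rigidity of $\iota' \circ \pi|_B$, uniqueness of envelopes, and the non-unital convention that $I(\cdot)$ is the envelope of the unitization) and which you correctly flag and handle, including the verification that $\pi(B)$ contains an approximate unit for $A/J$. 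Your contrapositive argument for the ``in particular'' clause is exactly the intended (and in the paper implicit) deduction.
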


\begin{proof} Let $J \unlhd A$ be an ideal such that $J \cap B =
\{0\}$.  Define a map $E \colon J + B \rightarrow B$ by $E(x
+ b) = b.$ As $I(B)$ is injective, $E$ extends to a
pseudo-expectation $\Phi \colon A \rightarrow I(B)$.  By construction,
$\Phi(x^*x) = 0$ for all $x \in J$.
\end{proof}

\subsection{Pseudo-Cartan inclusions} The theory of
pseudo-expectations is better developed for inclusions $(A,D)$ where
$D$ is abelian.  Indeed, this is the setting studied in \cite{Pit17,
Pit21, PitZar15}.  Lemma~\ref{lem: theta maps} will allow us to avoid
this assumption in several places.  However, $D$ is abelian in the
main class of examples of interest to us: pseudo-Cartan inclusions
$(A,D)$.  We end this section with a summary of the key facts on
pseudo-Cartan inclusions.  The key definitions and Theorem~\ref{thm:
pseudo-Cartan equiv} are from \cite{Pit21, Pit25}; the term
\emph{pseudo-Cartan} was coined in \cite{Pit25}.

\begin{definition} A regular inclusion of $\ca$-algebras $(A,D)$ is a
\emph{pseudo-Cartan inclusion} if
    \begin{enumerate}
    \item $D$ is abelian;
    \item $D^c$, the relative commutant of $D$ in $A$, is abelian;
    \item\label{pcdef3} both the inclusions $(D^c, D)$ and $(A,D^c)$ have the \iip.
    \end{enumerate} A pseudo-Cartan inclusion $(A,D)$ is called
\emph{a virtual Cartan inclusion} if $D$ is maximal abelian in $A$.
By condition~\eqref{pcdef3}, every 
pseudo-Cartan inclusion has the \iip.

\end{definition}
By combining~\cite[Proposition~2.3.19 and
Theorem~3.27]{Pit25}, one finds that a virtual Cartan inclusion $(A,D)$ is a Cartan
inclusion if and only if there is a (necessarily faithful)
conditional expectation $E \colon A \rightarrow D$.

Pseudo-Cartan inclusions are precisely the regular inclusions that have a
Cartan envelope.  We recall the precise definition of a Cartan
envelope in Definition~\ref{def: Cartan env} below, and record several
equivalent conditions for an inclusion to be pseudo-Cartan in
Theorem~\ref{thm: pseudo-Cartan equiv}.  We first recall the
definition of a regular morphism of regular inclusions from
\cite{Pit25}.

\begin{definition}
  Let $(A,B)$ and $(A_1,B_1)$ be regular inclusions of $\ca$-algebras.
  A \emph{regular morphism} $\alpha$ from $(A,B)$ to $(A_1,B_1)$,
  denoted
 $$\alpha \colon (A,B) \rightarrow (A_1,B_1),$$
 is a homomorphism $\alpha \colon A \rightarrow A_1$ such that
 $\alpha(N(A,B)) \subseteq N(A_1,B_1)$.  If the regular morphism
 $\alpha$ is a $^*$-monomorphism and $\alpha(B)\subseteq B_1$, we call
 $\alpha$ a \emph{regular expansion}, see \cite{Pit25}.
\end{definition}

\begin{remark} Let $(A,D)$ and $(A_1,D_1)$ be regular inclusions with
$D$ and $D_1$ abelian, and let $\alpha \colon (A,D) \rightarrow
(A_1,D_1)$ be a regular $*$-monomorphism.  A consequence of our
standing assumption that inclusions have the approximate unit property
is that $\alpha$ is automatically a regular expansion (see
\cite[Lemma~3.3(b)]{Pit25}).
\end{remark}

\begin{definition}[{\cite[Definition~3.13]{Pit25}}]\label{def: Cartan
env} Let $(A,D)$ be a regular inclusion with $D$ abelian.  A
\emph{Cartan envelope} of $(A,D)$ is a triple $(A_1,D_1, \alpha)$ such
that
\begin{enumerate}
\item \label{def: Cartan env1} $(A_1,D_1)$ is a Cartan inclusion;
\item \label{def: Cartan env2} $\alpha \colon (A,D) \rightarrow (A_1,D_1)$ is a regular
expansion with $A_1 = \cstar(\alpha(A) \cup E_1(\alpha(A)))$ and $D_1 =
\cstar(E_1(\alpha(A)))$, where $E_1 \colon A_1 \rightarrow D_1$ is the
faithful conditional expectation;
\item \label{def: Cartan env3} $(D_1,\alpha(D))$ has the \iip.
\end{enumerate}
\end{definition}

\begin{definition}[{\cite{PitZar15}}] Let $(A,B)$ be a regular inclusion of
C$^*$-algebras.  We say that $(A,B)$ has the \emph{faithful unique
pseudo-expectation property} if there is unique pseudo-expectation
$\Phi \colon A \rightarrow I(B)$ and $\Phi$ is faithful.
\end{definition}

\begin{theorem}[{c.f.~\cite[Theorem~3.27]{Pit25}}] \label{thm:
pseudo-Cartan equiv} Let $(A,D)$ be a regular inclusion of
$\ca$-algebras with $D$ abelian.  The following statements are equivalent:
\begin{enumerate}
    \item $(A,D)$ is a pseudo-Cartan inclusion;
    \item $(A,D)$ has a Cartan envelope;
    \item $(A,D)$ has the faithful unique pseudo-expectation property;
    \item $(\tilde{A},\tilde{D})$ is a pseudo-Cartan inclusion, where
$\tilde{A}$ and $\tilde{D}$ are the unitizations of $A$ and $D$
respectively.
\end{enumerate}
\end{theorem}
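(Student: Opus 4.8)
The plan is to obtain the equivalences $(1)\Leftrightarrow(2)\Leftrightarrow(3)$ directly from \cite[Theorem~3.27]{Pit25} and to establish $(1)\Leftrightarrow(4)$ by transporting the defining clauses of a pseudo-Cartan inclusion across the minimal unitization. The first task is bookkeeping: I would check that the notions used in $(1)$--$(3)$ match Pitts's. Our definition of a pseudo-Cartan inclusion---regular, $D$ and $D^c$ abelian, with the \iip\ for both $(D^c,D)$ and $(A,D^c)$---is the hypothesis isolated in \cite{Pit25}; Definition~\ref{def: Cartan env} is the notion of Cartan envelope used there; and the faithful unique pseudo-expectation property is likewise as in \cite{Pit25}. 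Once the dictionary is fixed, $(1)\Leftrightarrow(2)\Leftrightarrow(3)$ is the content of \cite[Theorem~3.27]{Pit25} for a regular inclusion with $D$ abelian.

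For $(1)\Leftrightarrow(4)$ I would show that each clause in the definition of a pseudo-Cartan inclusion is insensitive to adjoining a unit, so that all three clauses hold for $(A,D)$ precisely when they hold for $(\tilde A,\tilde D)$. Three points are needed. (a) Regularity and abelianness transfer: every $n\in N(A,D)$ lies in $N(\tilde A,\tilde D)$ because $nn^*,n^*n\in D$ and $n\tilde D n^*\subseteq D$, so $\tilde A=\overline{\spn}\,(N(A,D)\cup\{1\})\subseteq\overline{\spn}\,N(\tilde A,\tilde D)$, and $\tilde D$ is abelian exactly when $D$ is. (b) The relative commutant is computed by $\tilde D^c=D^c+\bC 1$, since the adjoined unit is central; hence $\tilde D^c$ is abelian if and only if $D^c$ is, and $(\tilde A,\tilde D^c)$ and $(\tilde D^c,\tilde D)$ are the unitizations of $(A,D^c)$ and $(D^c,D)$ respectively. (c) The \iip\ transfers between any inclusion $(C,B)$ (with $C$ one of $A$ or $D^c$) and its unitization.

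The substance of $(1)\Leftrightarrow(4)$ is therefore clause (c), which is where I expect the only real care to be required. The forward direction is immediate when $C$ is non-unital, because an ideal of $C$ is again an ideal of $\tilde C$ and $\tilde B\cap C=B$, so a nonzero ideal meeting $\tilde B$ already meets $B$. For the reverse direction one must rule out ``exotic'' small ideals of $\tilde C$: given a nonzero ideal $\tilde J\unlhd\tilde C$ with $\tilde J\cap C=\{0\}$, the quotient map $\tilde C\to\tilde C/C\cong\bC$ embeds $\tilde J$, forcing $\tilde J=\bC e$ for a central projection $e=c_0+1$; but then $eC\subseteq\tilde J\cap C=\{0\}$ shows $1-e=-c_0\in C$ is a unit for $C$, contradicting non-unitality. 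Hence $C$ is an essential ideal of $\tilde C$, so every nonzero ideal of $\tilde C$ meets $C$, after which the \iip\ of $(C,B)$ supplies an element of $B\subseteq\tilde B$. The remaining, genuinely unital, case ($C$ already unital) is handled by the splitting $\tilde C=C\oplus\bC$, under which the \iip\ is manifestly preserved.

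With (a)--(c) in hand, the three defining clauses hold for $(A,D)$ if and only if they hold for $(\tilde A,\tilde D)$, giving $(1)\Leftrightarrow(4)$. I note an alternative route to $(4)$ through $(3)$: since $I(D)=I(\tilde D)$ by definition, a pseudo-expectation $\Phi\colon A\to I(D)$ extends to $\tilde\Phi\colon\tilde A\to I(\tilde D)$ by $\tilde\Phi(a+\lambda 1)=\Phi(a)+\lambda 1$, and this is a bijection onto the pseudo-expectations of $(\tilde A,\tilde D)$, so uniqueness transfers at once. This route, however, again forces one to confront essentiality of $A$ in $\tilde A$ when transferring \emph{faithfulness}: a computation using Lemma~\ref{lem: bimod} reduces the vanishing $\tilde\Phi((a+\lambda 1)^*(a+\lambda 1))=0$ to $\Phi(a^*a)=|\lambda|^2 1$, and to conclude $\lambda=0$ one must rule out a positive element of the (non-unital) algebra $A$ having $\Phi$-value a nonzero multiple of the identity, via an approximate-unit argument exploiting essentiality. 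Since this is the same obstruction met in clause (c), the direct argument via (a)--(c) offers the cleaner path, and I would present that as the proof of $(1)\Leftrightarrow(4)$.
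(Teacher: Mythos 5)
Your proposal is correct, and it does strictly more than the paper, which in fact contains no written proof of this theorem: the equivalence $(1)\Leftrightarrow(2)\Leftrightarrow(3)$ is quoted from \cite[Theorem~3.27]{Pit25}, and the equivalence with $(4)$ is dispatched in the remark immediately following the statement by citing \cite[Observation~4.2.1]{Pit25}, with the standing approximate-unit assumption for inclusions doing the work. Your handling of $(1)\Leftrightarrow(2)\Leftrightarrow(3)$ thus coincides with the paper's (a citation), while for $(1)\Leftrightarrow(4)$ you re-derive from scratch what the paper outsources. Your clause-by-clause transfer is sound: (a) uses $nn^*, n^*n\in D$, which is precisely where the approximate-unit hypothesis enters (compare the first line of the proof of Lemma~\ref{lem: hereditary subalg}); (b) the computation $\tilde D^c=D^c+\bC 1$ is immediate; and (c) correctly isolates the only substantive point, namely essentiality of $C$ in $\tilde C$ when $C$ is non-unital, which your one-dimensional-ideal argument (a nonzero ideal $\tilde J$ with $\tilde J\cap C=\{0\}$ embeds in $\tilde C/C\cong\bC$, is spanned by a central projection $e=c_0+1$, and $eC=\{0\}$ forces $-c_0$ to be a unit for $C$) establishes cleanly; the unital case also works as you say, since an approximate unit of a unital $C^*$-algebra converges in norm to the unit, so $1_C\in B$ and $\tilde B\cong B\oplus\bC$ compatibly with $\tilde C\cong C\oplus\bC$. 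What the paper's route buys is brevity and alignment with \cite{Pit25}, where inclusions need not share approximate units and the equivalence with $(4)$ genuinely requires that hypothesis (hence the pointed phrasing of the remark); what your route buys is a self-contained argument making visible exactly where the approximate-unit assumption is used. One small caveat in your closing aside: the identification $I(D)=I(\tilde D)$ holds by the paper's convention only for non-unital $D$ (for unital $D$ one has $I(\tilde D)\cong I(D)\oplus\bC$), so the alternative route through $(3)$ would need that case split; since you present the direct argument via (a)--(c) as the proof, this is harmless.
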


\begin{remark} Condition (4) in Theorem~\ref{thm: pseudo-Cartan equiv}
is equivalent to the other three conditions by
\cite[Observation~4.2.1]{Pit25} since we are assuming $A$ and $D$
share an approximate unit.   Also, when $(A,D)$ has a Cartan envelope,
it is minimal and unique; see~\cite[Theorem~3.27]{Pit25} for the
precise statements.
\end{remark}

\begin{remark}\label{nexttryA.0}   See~\cite[Section 2]{PittsCoStReInII} for an example
  of a pseudo-Cartan inclusion whose Cartan envelope is explicitly
  described.  Additional examples are found in
  Section~\ref{Sec:Examples}.
  \end{remark}

\begin{example}[Higher-rank graph algebras]\label{ex: k-graph} The universal algebra  $\cstar(\Lambda)$ of a $k$-graph $\Lambda$ is generated by a family
of partial isometries $\{s_\lambda \colon \lambda \in \Lambda\}$
satisfying the Cuntz-Krieger relations of
\cite[Definition~1.5]{KumPas00}; we  point the reader to that reference for details.    The \emph{diagonal} subalgebra $D_\Lambda$ is the abelian
subalgebra generated by projections $\{s_\lambda s_\lambda^* \colon
\lambda \in \Lambda\}$.  Denote by $\Lambda^\infty$ the infinite path
space of $\Lambda$.  The \emph{cycline} subalgebra $M_{\Lambda}
\subseteq \cstar(\Lambda)$, generated by $\{s^{}_\lambda
s_\nu^* \colon \mu x = \nu x \text{ for all }x \in \Lambda^\infty\}$,  was introduced in \cite{BNR14} in order to
strengthen the Cuntz-Krieger Uniqueness Theorem.

The cycline algebra $M_{\Lambda}$ satisfies $D_\Lambda \subseteq
M_\Lambda$ and $M_\Lambda$ is abelian by \cite[Remark~7.2]{BNR14}.  By
\cite[Theorem~3.5]{Yan16}, $M_{\Lambda}$ is the relative commutant of
$D_\Lambda$ in $\cstar(\Lambda)$ and hence is maximal abelian.  As
$D_\Lambda$ is regular in $\cstar(\Lambda)$, $M_{\Lambda} \subseteq
\cstar(\Lambda)$ is also a regular inclusion, by
\cite[Lemma~2.10]{Pit17}.  Finally, $M_{\Lambda} \subseteq
\cstar(\Lambda)$ has the ideal intersection property by
\cite[Theorem~7.10]{BNR14}.  Thus, $(\cstar(\Lambda), M_\Lambda)$ is a
virtual Cartan inclusion and hence a pseudo-Cartan inclusion.  Note
that there may be no conditional expectation $E:
C^*(\Lambda)\rightarrow M_\Lambda$, see~\cite[Example~4.7]{BrownNagyReznikoffSimsWilliamsCaSuC*AlHaEtGr}.  Further analysis of ~\cite[Example~4.7]{BrownNagyReznikoffSimsWilliamsCaSuC*AlHaEtGr} is found in
Section~\ref{hrg} below.    
\end{example}

\begin{remark} Note that, in general, it is not the case that
  $(\cstar(\Lambda), D_{\Lambda})$ is pseudo-Cartan.  Indeed, if
  $\Lambda$ is a row-finite $k$-graph with no sources then, by
  \cite[Theorem~4.5]{KumPas00} and \cite[Proposition~5.5]{BCFS14},
  $(\cstar(\Lambda),D_\Lambda)$ has the \iip\ if and only if $\Lambda$
  satisfies Kumjian and Pask's aperiodicity condition
  (\cite[Definition~4.3]{KumPas00}).  In this case, $D_\Lambda$ is
  maximal abelian, and thus $D_\Lambda$ is a Cartan subalgebra of
  $\cstar(\Lambda)$.  Thus, in this setting,
  $(\ca(\Lambda),D_\Lambda)$ is a pseudo-Cartan inclusion if and only
  if it is a Cartan inclusion.
\end{remark}

\section{Isomorphisms of lattices of regular ideals} Let $A$ be a
$\ca$-algebra.  For a subset $X \subseteq A$, we denote by $X^{\perp}$
the set
$$ X^\perp = \{a \in A \colon aX = Xa = \{0\}\}. $$
The set $X^\perp$ is sometimes called the annihilator of $X$, e.g., in
\cite{Exel23, Vas23}, and $\mathrm{Ann}(X)$ is an alternative notation
for $X^\perp$.  If $(A,B)$ is an inclusion and $X \subseteq B$ we will
denote by $X^{\perp_B}$ and $X^{\perp\perp_B}$ the sets
\begin{align*} X^{\perp_B} &= X^{\perp} \cap B = \{ b \in B \colon bX
= Xb = \{0\}\}, \text{ and}\\ X^{\perp\perp_B} &= \{ b \in B \colon
bX^{\perp_B} = X^{\perp_B}b = \{0\}\} = (X^{\perp_B})^\perp \cap B.
\end{align*}

An ideal $J \unlhd A$ is a \emph{regular ideal} if $J =
J^{\perp\perp}$.  We refer the reader to \cite[Section~2]{BFPR24} for
a brief introduction to regular ideals.  By~\cite[Lemma~1.4]{Ham82}, the
collection of regular ideals in a $\ca$-algebra $A$ is 
a complete lattice, which we  denote   by
$\reg(A)$.

\begin{definition}\label{def: N inv ideal} Let $(A,B)$ be a regular
inclusion of $\ca$-algebras.  Let $N \subseteq N(A,B)$ be a generating
$*$-semigroup.  If $K \unlhd B$ we say that $K$ is called an
\emph{$N$-invariant ideal} if $nKn^* \subseteq K$ for all $n\in N$.
When $N = N(A,B)$ we will simply say that $K$ is an \emph{invariant
ideal}.  We denote all the regular invariant ideals in $B$ by
$\reginv_A(B)$.
\end{definition}

Let $(A,D)$ be a pseudo-Cartan inclusion and let $(A_1, D_1, \alpha)$
be a Cartan envelope for $(A,D)$.  Our goal in this section is to
prove Theorem~\ref{thm: all latice isomorphisms}, which says the
lattices $\reg(A)$, $\reginv_A(D)$, $\reg{A_1}$, and
$\reginv_{A_1}(D_1)$ are all isomorphic.  This theorem is broken into
three main steps.
\begin{enumerate}
    \item $\reg(A)$ and $\reginv_A(D)$ are isomorphic by
      \cite[Corollary~4.5]{Exel23}.  With additional hypotheses,
      this isomorphism is studied in more
detail in  Theorem~\ref{thm: 1-1 reg ideals pseudo}.
    \item An isomorphism between $\reginv_A(D)$ and
$\reginv_{A_1}(D_1)$ is given in Theorem~\ref{thm: D lattice
isomorphism}.
    \item The isomorphism between $\reg(A_1)$ and $\reginv_{A_1}(D_1)$
follows from \cite{BFPR24} (or \cite{Exel23}).
\end{enumerate}

Before explaining these isomorphisms, we establish the following useful
lemma.  Let $A$ be a $\ca$-algebra with injective envelope
$(I(A),\iota)$.  For regular ideals, Hamana shows more than just
the existence of the structure projection $p$.  We record this
information now.

\begin{lemma}[{cf.~\cite[Lemma~1.3]{Ham82}}]\label{lem: support
projections} Let $A$ be a $C^*$-algebra with injective envelope
$(I(A), \iota)$.  Let $J \unlhd A$ be a regular ideal and let $p \in
I(A)$ be the structure projection for $J$.  Then
$$ J = \{a \in A \colon \iota(a) \in pI(A)\}. $$
Further $pI(A) = \iota(J)^{\perp\perp}$.
\end{lemma}

\begin{proof} Let $J\unlhd A$ be a regular ideal.  The existence of
  the central projection $p \in I(A)$ satisfying
  $\iota(J) = pI(A) \cap \iota(A)$ follows from
  \cite[Lemma~1.3~(iii)]{Ham82} and \cite[Theorem~6.3]{Ham81}.  Since
  $p^\perp I(A)=\iota(J)^\perp$, we obtain
  $pI(A) = \iota(J)^{\perp\perp}$.
\end{proof}

\subsection{Regular ideals and faithful pseudo-expectations} The
following definition was introduced in \cite{Exel23}.

\begin{definition}[{\cite[Definition~3.3]{Exel23}}]\label{ExelInv} Let $(A,B)$ be
an inclusion of $\ca$-algebras.  The inclusion $(A,B)$ satisfies
\emph{condition \inv} if for all $J \unlhd A$
\begin{equation*} \overline{\spn}\{ab \colon a \in A,\ b \in J \cap
B\} = \overline{\spn}\{ba \colon a \in A,\ b \in J \cap B\}.
\end{equation*}
\end{definition}

Condition \inv\  is satisfied by the two main classes of inclusions
that we are interested in: abelian inclusions and regular inclusions.

\begin{example}\label{INVex} Let $(A,B)$ be an inclusion of $\ca$-algebras.
    \begin{enumerate}
        \item \label{INVex1} If $A$, and hence $B$, are abelian $\ca$-algebras then
$(A,B)$ satisfies \inv;
        \item \label{INVex2} If $(A,B)$ is a regular inclusion then $(A,B)$ satisfies
\inv\  by \cite[Proposition~4.2]{Exel23}.
    \end{enumerate}
\end{example}

In \cite[Theorem~3.24]{BFPR24} it was shown that if $(A,B)$ is a
regular inclusion satisfying the \iip\ and there is a faithful
conditional expectation satisfying an invariance condition, then $J
\mapsto J \cap B$ gives a lattice isomorphism from $\reg(A)$ to
$\reginv_A(B)$.  This result was greatly generalized by Exel:
\cite[Theorem~3.5]{Exel23} shows that if
$(A,B)$ is an inclusion satisfying the \iip\ and \inv, then the map
$J \mapsto J \cap B$ gives a lattice isomorphism from $\reg(A)$ to
$\reginv_A(B)$.

This leads to the following question.  Let $(A,B)$ be an inclusion of
$\ca$-algebras.  Given $J \unlhd A$, when is $J \cap B$ a regular
ideal in $B$?  Answering this question is needed to practically apply
\cite[Theorem~3.8]{PitZar15}.  In \cite[Proposition~3.7]{BFPR24} it is
shown that if there is a faithful conditional expectation $E \colon A
\rightarrow B$ then $J \cap B$ is a regular ideal in $B$ whenever $J$
is a regular ideal in $A$.  In \cite[Corollary~3.4(ii)]{Exel23} it is
shown that if $(A,B)$ satisfies the \iip\ and \inv,  then $J \cap B$
is a regular ideal in $B$ whenever $J$ is a regular ideal in $A$.  By
Proposition~\ref{prop: iip and faithful pseudo}, all
pseudo-expectations being faithful implies the \iip.  We now show that
a single faithful pseudo-expectation is all that is necessary for $J
\cap B$ to be regular when $J \unlhd A$ is regular, even in the
absence of \inv.

\begin{proposition}\label{prop: int reg pseudo} Let $(A,B)$ be an
inclusion of $C^*$-algebras, and let $(I(B),\iota)$ be an injective
envelope for $B$.  Assume there is a faithful pseudo-expectation $\Phi
\colon A \rightarrow I(B)$.  If $J\unlhd A$ is a regular ideal, then
$J \cap B$ is a regular ideal in $B$.
\end{proposition}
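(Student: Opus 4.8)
The goal is to show $(J\cap B)^{\perp\perp_B} = J\cap B$ for a regular ideal $J \unlhd A$, given a faithful pseudo-expectation $\Phi$. The plan is to reduce the regularity of $J\cap B$ inside $B$ to the regularity of $J$ inside $A$, using the faithfulness of $\Phi$ and the structure projection $p \in I(B)$ to transport the double-annihilator computation into the injective envelope.

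\textbf{Step 1: Set up annihilators and the easy inclusion.} Write $K = J\cap B$. One inclusion, $K \subseteq K^{\perp\perp_B}$, is automatic from the definition of the double annihilator. So the whole task is the reverse inclusion $K^{\perp\perp_B} \subseteq K = J\cap B$. Since $J$ is regular in $A$, Lemma~\ref{lem: support projections} gives a central structure projection $p \in I(B)$... but one must be careful: the natural projection lives in $I(A)$, not $I(B)$. I expect the cleaner route is to work directly with annihilators in $B$ and exploit that $\Phi$ is a $B$-bimodule map by Lemma~\ref{lem: bimod}.

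\textbf{Step 2: Use faithfulness of $\Phi$ to control $K^\perp$.} The key idea: take $b \in K^{\perp\perp_B}$. I want to show $b \in J$. Because $\Phi$ is faithful, it suffices to understand $\Phi$ on the relevant elements, and to show $b$ annihilates $J^\perp$ (which would give $b \in J^{\perp\perp} = J$, hence $b \in J\cap B = K$). So the plan is to prove the annihilator identity $(J\cap B)^{\perp_B}$ ``sees'' enough of $J^\perp$. Concretely, I would argue that if $b \in K^{\perp_B}$ then $\Phi(xb) = \Phi(bx) = 0$ for $x \in J$: indeed $bx \in J$ and $b$ kills $J\cap B$, and using the bimodule property $\iota(b)\Phi(x) = \Phi(bx)$ one relates $\Phi$-values on $J$ to the ideal $K$ in $B$. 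The faithfulness of $\Phi$ then upgrades these $\Phi$-identities to genuine annihilation statements in $A$, letting me pass between $K^{\perp_B}$ and $J^\perp \cap B$.

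\textbf{Step 3: Close the loop and identify the main obstacle.} Once I establish that $K^{\perp_B}$ and $J^{\perp}\cap B$ have the same double-annihilator behavior relative to $b$, the regularity $J = J^{\perp\perp}$ in $A$ forces $b \in J$, completing $K^{\perp\perp_B}\subseteq J\cap B$. The main obstacle is Step~2: the annihilator is computed in $B$ for the $B$-regularity, but $J$'s regularity is an $A$-statement, and a priori $K^{\perp_B} = K^\perp\cap B$ need not relate simply to $J^\perp\cap B$. The faithful pseudo-expectation is the only bridge, and the delicate point is converting $\Phi(x^*x)=0 \iff x = 0$ into the right two-sided annihilation statements without a conditional expectation (so without $\Phi(A)\subseteq B$) and without condition~\inv\ to symmetrize left and right ideals; here I anticipate leaning on the positivity/Cauchy--Schwarz inequality for completely positive maps, $\Phi(a)^*\Phi(a) \le \Phi(a^*a)$, to turn one-sided information into the faithfulness-usable form $\Phi(x^*x)=0$.
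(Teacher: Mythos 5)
There is a genuine gap, and it sits exactly where you flagged your ``main obstacle'': Step 2 does not work, and it cannot be repaired along the lines you propose. You claim that $b \in K^{\perp_B}$ forces $\Phi(bx)=\Phi(xb)=0$ for $x\in J$, and that you can then ``pass between $K^{\perp_B}$ and $J^{\perp}\cap B$.'' Only the inclusion $J^{\perp}\cap B \subseteq K^{\perp_B}$ holds in general; the reverse passage is false in this setting, because the proposition deliberately does \emph{not} assume the \iip\ (a single faithful pseudo-expectation does not imply it -- Proposition~\ref{prop: iip and faithful pseudo} needs \emph{all} pseudo-expectations faithful). Concretely, take $A=C([0,1])$, $B=\mathbb{C}1$ (so $I(B)=\mathbb{C}$ and pseudo-expectations are states), $\Phi$ integration against a fully supported probability measure (a faithful state), and $J=\{f: f|_{[0,1/2]}=0\}$, which is a regular ideal. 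Then $K=J\cap B=\{0\}$, so $K^{\perp_B}=B\ni 1$, yet $\Phi(1\cdot x)=\Phi(x)\neq 0$ for $0\neq x\in J$ positive, and $J^{\perp}\cap B=\{0\}\neq K^{\perp_B}$. The structural reason is that your argument tries to recover information about $J$ from $K=J\cap B$, but without the \iip\ the intersection can be trivial while $J$ is large, so $K^{\perp_B}$ carries no information about $J$ at all. Note also that your Step 3 target, $b\in K^{\perp\perp_B}\Rightarrow b\in J$, \emph{is} the proposition (it says $K^{\perp\perp_B}\subseteq J\cap B=K$), so without a correct bridge in Step 2 the loop-closing is circular.

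The paper's proof runs in the opposite direction and uses machinery you did not anticipate. For an \emph{arbitrary} ideal $J'\unlhd A$ it identifies $J'^{\perp}\cap B$ (not $(J'\cap B)^{\perp_B}$) with $L=\{b\in B: \iota(b)\in \Phi(J')^{\perp}\}$; faithfulness and the bimodule property (Lemma~\ref{lem: bimod}) are used exactly here, in the form $\iota(b)^*\Phi(a^*a)\iota(b)=\Phi(b^*a^*ab)=0 \Rightarrow ab=0$. It then takes the structure projection $p\in I(B)$ of the ideal $L\unlhd B$ -- this resolves your worry about $I(A)$ versus $I(B)$: the projection used is attached to an ideal of $B$, not to $J$ -- and shows $p\in\Phi(J')^{\perp}$ by a monotone-completeness computation: $p=\sup_\lambda \iota(b_\lambda)$ for an approximate unit $(b_\lambda)$ of $L$, whence $\Phi(a)p\Phi(a)^*=\sup_\lambda \Phi(a)\iota(b_\lambda)\Phi(a)^*=0$ for $a\in J'$. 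Consequently $L=\{b\in B:\iota(b)p=\iota(b)\}$, which is regular by Hamana's criterion. Regularity of $J$ enters only in the last line: $J\cap B=(J^{\perp})^{\perp}\cap B$ is regular by applying the above with $J'=J^{\perp}$. The passage to $J^{\perp}$ and the Hamana structure-projection/monotone-supremum arguments are precisely what substitute for the missing \iip\ and condition \inv, and neither appears in your proposal.
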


\begin{proof} First, let $J \unlhd A$ be an arbitrary ideal and let
$$ L  = \{ b \in B \colon \iota(b) \in \Phi(J)^\perp\}.$$
Then $L$ is a closed ideal in $B$.  If $b \in L$ and $a \in J$, then
$$ \Phi(b^*a^*ab) = \iota(b)^* \Phi(a^*a) \iota(b) =0. $$
Since $\Phi$ is faithful, it follows that $ab=0$.  Similarly $ba = 0$,
and so $L \subseteq J^\perp \cap B$.  Conversely, if $b \in J^\perp
\cap B$, then for any $a \in J$
$$\iota(b)\Phi(a) = \Phi(ba) = 0 \text{ and }\Phi(a)\iota(b)=\Phi(ab) = 0.$$
And so $b \in L$.  Thus, $L = J^\perp \cap B$.

Let $p \in I(B)$ be the structure projection for $L$ and let
$(b_\lambda)_\lambda$ be a positive, increasing approximate unit for
$L$.
Then, by \cite[Lemma~1.1]{Ham82}, $p = \sup_\lambda
\iota(b_\lambda)$.  Now take any $a \in J$.  By
\cite[Proposition~2.1.10]{SaiWriBook}
$$ \Phi(a)p\Phi(a)^* =\sup_\lambda \Phi(a) \iota(b_\lambda) \Phi(a)^* =0.$$
Hence $p \in \Phi(J)^\perp$, and therefore
$$ L = \{b \in B \colon \iota(b) = p\iota(b)\}.$$
Thus $L = J^\perp \cap B$ is a regular ideal in $B$ by
\cite[Lemma~1.3]{Ham82}.

Now take a regular ideal $J \unlhd A$.  Then, by the above argument,
$$ J \cap B = (J^\perp)^\perp \cap B$$
is a regular ideal in $B$.
\end{proof}

\begin{definition}\label{def: N inv psuedo} Let $(A,B)$ be a regular
inclusion of $\ca$-algebras, and let $(I(B), \iota)$ be an injective
envelope for $B$.  Let $N \subseteq N(A,B)$ be a generating
$*$-semigroup.  A pseudo-expectation $\Phi \colon A \rightarrow I(B)$
is \emph{$N$-invariant} if for $n \in N$ and $a \in A$
$$ \Phi(n^*an) = \tilde{\theta}_n(\Phi(|n^*|\, a\, |n^*|)),$$
where $\tilde{\theta}_n$ is the extension of the map
$|n^*|\, b\, |n^*| \mapsto n^*b n$ given in Lemma~\ref{lem: theta
  maps}.\footnote{For the equality
  $\Phi(n^*an) = \tilde{\theta}_n(\Phi(|n^*|\, a\, |n^*|))$ in
  Definition~\ref{def: N inv psuedo} to make sense,
  $\Phi(|n^*|\, a\, |n^*|)$ must belong to the domain of $\tilde\theta_n$.
  To see this, note that $(|n^*|^{1/k})$ is an approximate unit for
  $|n^*|\, B\, |n^*|$, so \cite[Lemma~1.1(i)]{Ham82} shows the structure
  projection $p$ for $|n^*|\, B\, |n^*|$ satisfies
  $p=\sup_{I(B)_{sa}} \iota(|n^*|^{1/k})$.  It follows that for every
  $a\in A$,
  $\Phi(|n^*|\, a\, |n^*|)=\iota(|n^*|)\Phi(a)\iota(|n^*|) \in pI(B) p
  =\text{dom}\, \tilde\theta_n$.  \black}

When $N = N(C,B)$,
i.e. when $\Phi$ is $N(C,B)$-invariant, we will simply call $\Phi$ an
\emph{invariant} pseudo-expectation.
\end{definition}

\begin{example} Let $(A,D)$ be a regular inclusion with $D$ abelian
and $A$ unital.  If there is a \emph{unique} pseudo-expectation $\Phi
\colon A \rightarrow I(D)$, then $\Phi$ is invariant by
\cite[Proposition~6.2]{Pit21}.
\end{example}

\begin{example} Let $(A,B)$ be an inclusion of $\ca$-algebras, let
$(I(B), \iota)$ be an injective envelope for $B$, and let $N \subseteq
N(A,B)$ be a $*$-semigroup.  Suppose that $\Phi \colon A \rightarrow
B$ is a conditional expectation such that $\iota \circ \Phi$ is an
$N$-invariant pseudo-expectation.  Then for any $a \in A$ and $n \in
N$
\begin{align*} \iota(\Phi(n^*an)) &=
\tilde{\theta}_n(\iota(\Phi(|n^*|\, a\, |n^*|))) \\ &=
\iota(\theta_n(|n^*|\Phi(a)|n^*|)) \\ &= \iota(n^* \Phi(a) n),
\end{align*} and so $\Phi(n^* a n) = n^* \Phi(a) n$.  Hence, for
conditional expectations, Definition~\ref{def: N inv psuedo} implies
\cite[Definition~3.13]{BFPR24}.  Thus all the examples in
\cite[Examples~3.15]{BFPR24} are examples of $N$-invariant
pseudo-expectations.
\end{example}

As previously stated, if $(A,B)$ is a regular inclusion of
$\ca$-algebras with the \iip, then $J \mapsto J\cap B$ is a lattice
isomorphism from $\reg(A)$ to $\reginv_A(B)$, by
\cite[Corollary~4.5]{Exel23}.  For the remainder of this subsection we
work towards a description of the inverse of this isomorphism when
there is a faithful $N$-invariant pseudo-expectation $\Phi \colon A
\rightarrow I(B)$.  This will culminate in Theorem~\ref{thm: 1-1 reg
ideals pseudo}, which will be used to study quotients of regular
ideals in Section~\ref{sec: quotients}.

\begin{notation}\label{not: J_K pseudo}
  Let $(A,B)$ be a regular inclusion of $\ca$-algebras with
  $(I(B),\iota)$ an injective envelope for $B$, and let
  $\Phi \colon A \rightarrow I(B)$ be a pseudo-expectation.  Let
  $K \unlhd B$ be an ideal.  Denote by $J_K$ the set
$$ J_K = \{ a \in A \colon \Phi(a^*a)\in \iota(K)\}, $$
and denote by $L_K$ the set
$$ L_K = \{ a \in A \colon \Phi(a^*a)\in \iota(K)^{\perp\perp}\}. $$
\end{notation}

In \cite[Proposition~3.19]{BFPR24}, it is shown that if $K \unlhd B$ is
an $N$-invariant regular ideal and $\Phi$ is a faithful $N$-invariant
conditional expectation, then $J_K$ is a regular ideal of $B$;
furthermore, in this
context,  $J_K=L_K$.    
When the hypothesis that there is an $N$-invariant
conditional expectation is relaxed to assuming there is an $N$-invariant
pseudo-expectation, we shall next show $J_K$ is a \emph{right} ideal,
but we make no claim on $J_K$ being a left ideal.  This is because we
do not know whether $\iota(K)$ is hereditary in $I(B)$ for an ideal
$K \unlhd B$.

\begin{proposition}\label{prop: JK ideal pseudo} Let $(A,B)$ be a
regular inclusion of $C^*$-algebras and let $(I(B),\iota)$ be an
injective envelope for $B$.  Let $N \subseteq N(A,B)$ be a generating
$*$-semigroup.  Assume that there is an $N$-invariant
pseudo-expectation $\Phi \colon A \rightarrow I(B)$ and let $K \unlhd B$
be an $N$-invariant ideal.  The following statements hold.
\begin{enumerate}[(i)]
\item $J_K$ is a right ideal in $A$ satisfying $J_K \cap B = K$.
\item $L_K$ is an ideal in $A$ satisfying $L_K \cap B =
K^{\perp\perp_B}$.
\item If $\Phi$ is faithful and  $K$ is in addition assumed to be a
  regular ideal,
  then $L_K$ is a regular ideal of $A$ satisfying $L_K =
J_K^{\perp\perp}$.
\end{enumerate}
\end{proposition}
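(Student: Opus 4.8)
The plan is to work throughout with the structure projection $p\in I(B)$ of the ideal $K\unlhd B$; since $K$ is an ideal, $p$ is central in $I(B)$. Arguing as in Lemma~\ref{lem: support projections} (via \cite[Lemma~1.3]{Ham82}) I would first record two identities: $\iota(K)^{\perp\perp}=pI(B)$ and $\{b\in B:\iota(b)\in pI(B)\}=K^{\perp\perp_B}$. The first rewrites $L_K=\{a\in A:\Phi(a^*a)\in pI(B)\}=\{a\in A:p^\perp\Phi(a^*a)=0\}$, and the second will be used to compute traces on $B$. Centrality of $p$ makes domination well behaved: if $0\le S\le T$ with $T\in pI(B)$ then $S\in pI(B)$, a fact I will use repeatedly.

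For (ii), let $\varpi\colon I(B)\to I(B)/pI(B)$ be the quotient by the (central, hence two-sided) ideal $pI(B)$, and set $\bar\Phi=\varpi\circ\Phi$. Then $L_K=\{a:\bar\Phi(a^*a)=0\}$ is the left kernel of the completely positive map $\bar\Phi$, so it is automatically a norm-closed left ideal (Kadison--Schwarz gives $\bar\Phi((ca)^*(ca))\le\|c\|^2\bar\Phi(a^*a)$, and the same inequality forces the cross terms to vanish, giving additivity). To see $L_K$ is also a right ideal it suffices, by density of $\spn N$ and norm-closedness, to check $an\in L_K$ for $a\in L_K$, $n\in N$. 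Here invariance does the work: by Lemma~\ref{lem: bimod} and $N$-invariance of $\Phi$, $\Phi(n^*a^*an)=\tilde\theta_n(\iota(|n^*|)\Phi(a^*a)\iota(|n^*|))$, and because $K$ is $N$-invariant the isomorphism $\tilde\theta_n$ of Lemma~\ref{lem: theta maps} respects the central projection $p$ (on $\iota(K)$ it is $\theta_n(|n^*|k|n^*|)=n^*kn\in K$), so the right-hand side lies in $pI(B)$. Finally $L_K\cap B=\{b:\iota(b)\in pI(B)\}=K^{\perp\perp_B}$ by the second identity.

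For (i), the equality $J_K\cap B=K$ is immediate, since for $b\in B$ one has $\Phi(b^*b)=\iota(b^*b)$, and injectivity of $\iota$ together with $K$ being an ideal gives $b^*b\in K\iff b\in K$. Norm-closedness and scalar invariance are clear, and closure under right multiplication by each $n\in N$ is proved as in (ii) but keeping track of $\iota(K)$ itself: writing $\Phi(a^*a)=\iota(k)$ with $k\in K$, $\Phi((an)^*(an))=\tilde\theta_n(\iota(|n^*|k|n^*|))=\iota(\theta_n(|n^*|k|n^*|))=\iota(n^*kn)\in\iota(K)$, where $n^*kn\in K$ by $N$-invariance. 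The remaining and genuinely delicate point is additivity: for $a,b\in J_K$ the positivity of the $2\times2$ matrix with entries $\Phi(a^*a),\Phi(a^*b),\Phi(b^*a),\Phi(b^*b)$ (image of a positive element under $\Phi\otimes\mathrm{id}_{M_2}$) controls the cross term, and together with the fact that $K$ is hereditary \emph{in $B$} one deduces $\Phi((a+b)^*(a+b))\in\iota(K)$. I expect this to be the main obstacle, precisely because $\iota(K)$ need not be hereditary in $I(B)$; this asymmetry---invariance is available to push elements to the right, but heredity in $I(B)$ is not---is exactly why only a right ideal is claimed. Once additivity holds, right multiplication extends from $N$ to all of $A$ by density.

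For (iii) assume $\Phi$ faithful and $K=K^{\perp\perp_B}$ regular and $N$-invariant; then $K^{\perp_B}$ is again regular and $N$-invariant, with structure projection $p^\perp$. The heart is the annihilator formula $J_K^\perp=L_K^\perp=\{x\in A:\Phi(x^*x)\in p^\perp I(B)\}=L_{K^{\perp_B}}$. For $J_K^\perp\subseteq L_{K^{\perp_B}}$, take an approximate unit $(e_\lambda)$ of $K\subseteq J_K$; if $x\in J_K^\perp$ then $xe_\lambda=0$, so $\iota(e_\lambda)\Phi(x^*x)\iota(e_\lambda)=\Phi((xe_\lambda)^*(xe_\lambda))=0$, and letting $\lambda$ increase with $\sup_\lambda\iota(e_\lambda)=p$ and \cite[Proposition~2.1.10]{SaiWriBook} gives $p\Phi(x^*x)p=0$, i.e.\ $\Phi(x^*x)\in p^\perp I(B)$. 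Conversely, if $\Phi(x^*x)\in p^\perp I(B)$ and $a\in L_K$, then $ax\in L_K$ by (ii), so $\Phi((ax)^*(ax))\in pI(B)$, while $\Phi((ax)^*(ax))=\Phi(x^*a^*ax)\le\|a\|^2\Phi(x^*x)\in p^\perp I(B)$ forces $\Phi((ax)^*(ax))\in pI(B)\cap p^\perp I(B)=\{0\}$; faithfulness gives $ax=0$, and applying this to $a^*\in L_K$, $x^*$ (using that $L_K$ and $L_{K^{\perp_B}}$ are $*$-closed ideals) gives $xa=0$. This yields $L_{K^{\perp_B}}\subseteq L_K^\perp\subseteq J_K^\perp$, so all three sets coincide. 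Applying the formula to $K^{\perp_B}$ in place of $K$ then gives $L_K^{\perp\perp}=(L_{K^{\perp_B}})^\perp=L_{K^{\perp\perp_B}}=L_K$, so $L_K$ is regular, and $J_K^{\perp\perp}=(J_K^\perp)^\perp=L_K$, completing (iii).
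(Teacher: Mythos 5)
Your proposal follows the paper's proof in all essentials. The computation $\Phi(n^*a^*an)=\tilde\theta_n(\iota(|n^*|)\Phi(a^*a)\iota(|n^*|))$ pushing right multiplication by $N$ through invariance, the trace identities $J_K\cap B=K$ and $L_K\cap B=K^{\dperp_B}$, and in (iii) the annihilator formula $J_K^\perp=L_K^\perp=L_{K^{\perp_B}}$ followed by a second application with $K^{\perp_B}$ in place of $K$ are exactly the paper's steps. Your repackagings are harmless or mildly better: passing to the quotient $I(B)/pI(B)$ is the same as the paper's appeal to heredity of the ideal $\iota(K)^{\dperp}=pI(B)$; your approximate-unit/supremum argument for $J_K^\perp\subseteq L_{K^{\perp_B}}$ replaces the paper's one-line bimodule computation $\Phi(a^*a)\iota(b)=\Phi(a^*ab)=0$ for $b\in K$; and your (iii) routes every right multiplication through the two-sided ideal $L_K$, whereas the paper's step ``$ca\in J_K$'' for $c\in J_K$, $a\in L_{K^\perp}$ invokes the full right-ideal property of $J_K$ --- on this point your argument is tighter. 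Two of your assertions need fleshing out. First, that $\tilde\theta_n$ ``respects $p$'' is exactly what the paper proves carefully: $\theta_n$ restricts to $\theta_n^K\colon\overline{|n^*|K|n^*|}\rightarrow\overline{n^*Kn}$, and uniqueness of injective-envelope extensions gives $\widetilde{\theta_n^K}=\tilde\theta_n|_{q_1I(B)q_1}$ for the structure projections $q_1,q_2$, whence $\tilde\theta_n$ maps its domain intersected with $\iota(K)^{\dperp}$ into $\iota(K)^{\dperp}$; as written your parenthetical is an assertion, not a proof. Second, you use that $K^{\perp_B}$ is $N$-invariant; Lemma~\ref{lem: intertwine ideals} covers only abelian $D$, but the general case is one line: for $b\in K^{\perp_B}$ and $k\in K$ one has $(nbn^*k)(nbn^*k)^*=nb(n^*kk^*n)b^*n^*=0$, since $n^*kk^*n\in K$ and $bK=\{0\}$.

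The one genuine soft spot is the additivity of $J_K$ in (i), and your proposed fix does not work as stated. The $2\times 2$-matrix positivity controls the cross term only through Schwarz-type inequalities such as $\Phi(a^*b)^*\Phi(a^*b)\le\|\Phi(a^*a)\|\,\Phi(b^*b)$, and $\Phi(a^*b)$ is an element of $I(B)$, not of $\iota(B)$, so heredity of $K$ \emph{in $B$} never gets purchase. Likewise the bound $\Phi((a+b)^*(a+b))\le 2\iota(k_1)+2\iota(k_2)$ only places $\Phi((a+b)^*(a+b))$ in the hereditary subalgebra of $I(B)$ generated by $\iota(K)$; upgrading that to membership in $\iota(K)$ is precisely the heredity question for $\iota(K)$ in $I(B)$ that the authors flag as unknown. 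To be fair, the paper's own proof of (i) elides the same point: it deduces ``right ideal'' from $J_KN\subseteq J_K$ together with density of $\spn N$, a step which tacitly presupposes that $J_K$ is closed under sums, and no argument for that is given there either. The issue is quarantined for everything downstream: the proof of Theorem~\ref{thm: 1-1 reg ideals pseudo} and part (iii) only use $K\subseteq J_K\subseteq L_K$, $J_KB\subseteq J_K$, $J_KN\subseteq J_K$, and the two-sided ideal $L_K$, all of which you (and the paper) do establish --- and, as noted, your version of (iii) avoids the right-ideal property of $J_K$ altogether, so your proof of (iii) stands independently of this point.
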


\begin{proof} (i) For $n \in N$, recall the $*$-isomorphisms
  $\theta_n: \overline{n B n^*}\rightarrow \overline{n^* B n}$ and
  $\tilde{\theta}_n \colon I(\overline{n B n^*}) \rightarrow
  I(\overline{n^* B n})$ from Lemma~\ref{lem: theta maps}.

Given $a\in J_K$,
we can find $k\in K$ such that $\Phi(a^*a)=\iota(k)$.  Fix $n \in
N$ and let $$c = |n^*| = (nn^*)^{1/2}.$$  Since $\Phi$ is
$N$-invariant  and $c\in B$,
\begin{align*}\Phi(n^*a^*an)&=\tilde\theta_n(\Phi(ca^*ac))=\tilde\theta_n(\iota(c)\Phi(a^*a)\iota(c))=\tilde\theta_n(\iota(c)\iota(k)\iota(c))\\
&=\tilde\theta_n(\iota(ckc)) = \iota(\theta_n(ckc))=\iota(n^*kn).
\end{align*} Since $K$ is $N$-invariant, we have $\Phi(n^*a^*an)\in
\iota(K)$, and so $an\in J_K$.  As the $*$-semigroup $N$ spans a dense
subset of $A$, we conclude $J_K$ is a right ideal in $A$.

Let $b \in B$. Then
$$ b \in J_K \iff \Phi(b^*b) \in \iota(K).$$
Since $\Phi(b^*b) = \iota(b^*b)$, and $\iota$ is injective, it follows
that $b \in J_K$ if and only if $b \in K$.  Hence, $J_K \cap B = K,$
completing the proof of (i).

(ii) We will now show that $L_K$ is an ideal.  To show that $L_K$ is a
left ideal we will follow similar reasoning as in
\cite[Proposition~3.19(i)]{BFPR24}.  Take $a \in L_K$ and $b \in A$.
Then $a^*b^*ba \leq \|b\|^2 a^*a$ and hence
$$\Phi((ba)^* ba) = \Phi(a^*b^*ba) \leq \|b\|^2 \Phi(a^*a) \in \iota(K)^{\perp\perp},$$
since pseudo-expectations are (completely) positive maps.  As ideals
in $C^*$-algebras are hereditary, it follows that $\Phi((ba)^*(ba))
\in \iota(K)^{\perp\perp}$.   Hence, $ba \in L_K$, and so $L_K$ is a
left-ideal of $A$.

Note that, for $k \in K$ and $n \in N$, $\theta_n(|n^*|\, k\, |n^*|) = n^*kn
\in K$, since $K$ is $N$-invariant.  Hence $\theta_n$ restricts to a
$*$-isomorphism
$$ \theta_n^K \colon \overline{|n^*| K |n^*|} \longrightarrow \overline{n^* K n}.$$
Let $q_1$ be the structure projection for $\overline{|n^*|K|n^*|}$ and
let $q_2$ be the structure projection for $\overline{n^*Kn}$.  Then
$\theta_n^K$ extends uniquely to a $*$-isomorphism
$$ \widetilde{\theta_n^K} \colon q_1 I(B) q_1 \rightarrow q_2 I(B) q_2$$
satisfying $\widetilde{\theta_n^K} \circ \iota = \iota \circ
\theta_n^K.$ As $\theta_n^K$ and $\theta_n$ agree on
$\overline{|n^*|K|n^*|}$, the uniqueness of $\widetilde{\theta_n^K}$
says that $$\widetilde{\theta_n^K} = \tilde\theta_n|_{q_1I(B)q_1}.$$
In particular, if $x \in \iota(K)^{\perp\perp}=I(K)$ (the equality is
from Lemma~\ref{lem: support projections}) and $x$ is in the
domain of $\tilde{\theta}_n$, then $\tilde{\theta}_n(x) \in
\iota(K)^{\perp\perp}$.

Take any $a \in L_K$.  Using again the fact that $\Phi$ is
$N$-invariant, we get
\begin{align*}\Phi(n^*a^*an)&=\tilde\theta_n(\Phi(|n^*|\, a^*a\, |n^*|))=\tilde\theta_n(\iota(|n^*|)\Phi(a^*a)\iota(|n^*|))\in
 \iota(K)^{\perp\perp}.\end{align*} Hence, $an \in L_K$.  Since the $*$-semigroup $N$ spans a
dense subset of $A$, we conclude $L_K$ is an ideal in $A$.

Note that $b \in L_K \cap B$ if and only if $\Phi(b^*b) \in
\iota(K)^{\perp\perp}$.  However, as $\Phi|_B = \iota$, and $\iota$ is
injective, it follows that $\iota(b) \in \iota(K)^{\perp\perp} \cap
\iota(B)$.  As $\iota(K)^{\perp\perp}\cap \iota(B) =
\iota(K^{\perp\perp})$ by Lemma~\ref{lem: support projections}, we
have  $L_K \cap B = K^{\perp\perp}.$ This completes the proof of
(ii).

(iii) Now assume that $K \unlhd B$ is an $N$-invariant regular ideal
and that $\Phi$ is a faithful $N$-invariant pseudo-expectation.
Since $\iota(K^\perp_B)^{\perp\perp} = \iota(K)^\perp$,
Lemma~\ref{lem: support projections} gives
$$ L_{K^\perp} = \{a \in A \colon \Phi(a^*a) \in \iota(K)^\perp \}. $$

If $a \in J_K^\perp$ and $b \in K$, we have $\Phi(a^*a)\iota(b)=
\Phi(a^*ab) = 0,$ and $\iota(b)\Phi(a^*a)= \Phi(ba^*a) = 0.$ Hence, if
$a \in J_K^\perp$, $\Phi(a^*a) \in \iota(K)^\perp$.  Thus $J_K^\perp
\subseteq L_{K^\perp}$.

Conversely, if $a \in L_{K^\perp}$ and $c \in J_K$ then $ca \in J_K
\cap L_{K^\perp}$.  Hence $\Phi(a^*c^*ca) \in \iota(K) \cap
\iota(K)^\perp = \{0\}$.  As $\Phi$ is faithful, it follows that $ca =
0$.

To show that $ac = 0$, note that
$$ \Phi(c^*a^*ac) \leq \|a\|^2 \Phi(c^*c) \in \iota(K). $$
Hence, $\Phi(c^*a^*ac)$ lies in the ideal generated by $\iota(K)$ in
$I(B)$.  In particular, $\Phi(c^*a^*ac) \in \iota(K)^{\perp\perp}.$ As
$a \in L_{K^\perp}$, we also have that $ac \in L_{K^{\perp}}$.
Therefore,
$$\Phi (c^*a^*ac) \in \iota(K)^{\perp\perp} \cap \iota(K)^\perp = \{0\}.$$
Thus $J_K^\perp = L_{K^\perp}$.  It follows that
$$ J_K^{\perp\perp} = L_{K^\perp}^\perp = \{a \in A \colon \Phi(a^*a) \in \iota(K)^{\perp\perp} \} = L_K. $$
Hence $L_K$ is regular and (iii) is proved.
\end{proof}

Let $(A,B)$ be a regular inclusion satisfying the \iip, let
$(I(B),\iota)$ be an injective envelope for $B$, and let
$\Phi \colon A \rightarrow I(B)$ be a faithful pseudo-expectation.  By
\cite[Corollary~4.5]{Exel23}, the map $J \mapsto J \cap B$ is a lattice
isomorphism from $\reg(A)$ to $\reginv_A(B)$, with inverse map
$K \mapsto K^{\perp\perp_A}$.  In this context, an alternate
description for the inverse map will be useful for us in Section 4,
when we study quotients by regular ideals.  Theorem~\ref{thm: 1-1 reg
  ideals pseudo} gives the alternate description; note that this
theorem generalizes \cite[Theorem~3.24]{BFPR24}.

\begin{theorem}\label{thm: 1-1 reg ideals pseudo}
  Let $(A,B)$ be a regular inclusion of $\ca$-algebras satisfying the
  \iip, let $(I(B),\iota)$ be an injective envelope for $B$, and let
  $N$ be a generating $*$-semigroup. Assume there is an $N$-invariant
  faithful pseudo-expectation $\Phi \colon A \rightarrow I(B)$.

  The invariant regular ideals of $B$ form a Boolean algebra.  The map
  $J \mapsto J \cap B$ has inverse given by
  $K \mapsto L_K = J_K^{\perp\perp} = K^{\perp\perp}$ and is a
  Boolean algebra isomorphism between the regular ideals of $A$ and
  the invariant regular ideals of $B$.
\end{theorem}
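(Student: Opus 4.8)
The plan is to assemble the statement from Exel's correspondence together with Proposition~\ref{prop: JK ideal pseudo}; most of the analytic work has already been done, so the task is to glue the pieces and match up the relevant invariance conditions. First I would observe that $(A,B)$, being a regular inclusion, automatically satisfies condition \inv\ by Example~\ref{INVex}. Combined with the standing \iip\ hypothesis, this lets me invoke \cite[Corollary~4.5]{Exel23}: the map $\Theta \colon J \mapsto J \cap B$ is a lattice isomorphism from $\reg(A)$ onto $\reginv_A(B)$, with inverse $K \mapsto K^{\perp\perp}$ (all annihilators here computed in $A$). The Boolean algebra assertion then follows for free: by Hamana's correspondence between regular ideals and the projection lattice of $Z(I(A))$ (\cite{Ham81, Ham82}), $\reg(A)$ is a complete Boolean algebra, and a lattice isomorphism preserves top, bottom, joins and meets, hence carries complements to complements. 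Thus $\reginv_A(B)$ inherits a Boolean algebra structure and $\Theta$ is a Boolean algebra isomorphism.

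Next I would identify the abstract inverse $K \mapsto K^{\perp\perp}$ concretely with $K \mapsto L_K$. Fix $K \in \reginv_A(B)$, so $K$ is a regular $N(A,B)$-invariant ideal of $B$. Since $N \subseteq N(A,B)$, $K$ is in particular $N$-invariant, which is exactly what is needed to apply Proposition~\ref{prop: JK ideal pseudo}. Part (iii) gives that $L_K$ is a regular ideal of $A$ with $L_K = J_K^{\perp\perp}$, and part (ii) gives $L_K \cap B = K^{\perp\perp_B}$, which equals $K$ because $K$ is regular in $B$. Hence $L_K$ is a regular ideal of $A$ with $\Theta(L_K) = L_K \cap B = K$. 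Because $\Theta$ is a bijection with inverse $K \mapsto K^{\perp\perp}$, the regular ideal $L_K$ must coincide with $K^{\perp\perp}$. Chaining the equalities yields $K \mapsto L_K = J_K^{\perp\perp} = K^{\perp\perp}$ as the inverse of $\Theta$, completing the proof.

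The genuinely load-bearing results are Proposition~\ref{prop: JK ideal pseudo} and Exel's correspondence; what remains is essentially bookkeeping, with two points deserving attention. The first is the compatibility of invariance hypotheses: the pseudo-expectation is only assumed $N$-invariant for the chosen generating $*$-semigroup $N$, whereas $\reginv_A(B)$ is defined via full $N(A,B)$-invariance. This is harmless in the direction we need, since $N(A,B)$-invariance is the stronger condition (as $N \subseteq N(A,B)$), so every $K \in \reginv_A(B)$ legitimately feeds into Proposition~\ref{prop: JK ideal pseudo}. The second, and the only step where one might worry about gaps, is the passage from $L_K \cap B = K$ to $L_K = K^{\perp\perp}$: matching intersections alone would not suffice were it not for the injectivity of $\Theta$, which forces the two regular ideals $L_K$ and $K^{\perp\perp}$ of $A$ — both having intersection $K$ with $B$ — to be equal. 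I expect no substantive obstacle beyond verifying these two bookkeeping points carefully.
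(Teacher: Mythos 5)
Your proposal is correct, but it is organized genuinely differently from the paper's proof. You take Exel's bijection \cite[Corollary~4.5]{Exel23} as the backbone --- injectivity, surjectivity, and the inverse formula $K \mapsto K^{\perp\perp}$ all come from there --- and the only new content you verify is that $L_K$ is a regular ideal with $L_K \cap B = K^{\perp\perp_B} = K$ via Proposition~\ref{prop: JK ideal pseudo}(ii),(iii), after which injectivity of $J \mapsto J \cap B$ forces $L_K = K^{\perp\perp}$. The paper argues in the opposite direction: starting from a regular ideal $J \unlhd A$ with $K = J \cap B$ (regular by Proposition~\ref{prop: int reg pseudo}), it proves $J = L_K$ \emph{directly} by two hands-on applications of the \iip\ (if $J \not\subseteq L_K$ then $J \cap L_K^{\perp}$ is a nonzero regular ideal meeting $B$ trivially; if $J \subsetneq L_K$ then $L_K \cap J^{\perp}$ is), citing \cite[Theorem~3.5]{Exel23} only for the final identification $L_K = K^{\perp\perp}$ and deferring the Boolean-algebra assertion to the argument of \cite[Theorem~3.24]{BFPR24}. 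Your alternative for the Boolean statement --- transferring Hamana's complete Boolean structure on $\reg(A)$ through the lattice isomorphism --- is legitimate and arguably cleaner, since being a Boolean algebra is an order-theoretic property preserved by any order isomorphism. What the paper's more laborious route buys is Remark~\ref{rem: iip riip}: because its intersection arguments involve only \emph{regular} ideals of $A$, the proof visibly uses no more than the \riip, whereas your wholesale appeal to \cite[Corollary~4.5]{Exel23}, invoked under the full \iip, obscures that refinement. What your route buys is brevity and a clean division of labor: all bijectivity is outsourced, and Proposition~\ref{prop: JK ideal pseudo} is used exactly once, to identify the inverse concretely. Your two flagged bookkeeping points --- that $N(A,B)$-invariance of $K$ implies the $N$-invariance needed to apply Proposition~\ref{prop: JK ideal pseudo}, and that $L_K \cap B = K$ alone does not give $L_K = K^{\perp\perp}$ without injectivity --- are both identified and handled correctly.
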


\begin{proof}
  That the invariant regular ideals of $B$ form a Boolean algebra
  follows as in the proof of \cite[Theorem~3.24]{BFPR24}.

Let $J \unlhd A$ be a regular ideal.  Let $K = J \cap B$.  Then,
$K\unlhd B$ is an invariant ideal.  By Proposition~\ref{prop: int reg
  pseudo}, $K$ is also a regular ideal.  By Proposition~\ref{prop: JK
  ideal pseudo}, $L_K = J_K^{\perp\perp}$ is regular ideal of $A$
satisfying $L_K \cap B = K = J \cap B$.

Suppose $J \not\subseteq L_K$.  Then there is $a \in J$ and 
$b \in L_K^\perp$ such that $ab \neq 0$.  Hence
$J \cap L_K^{\perp} \neq \{0\}$.  However, $J \cap L_K^\perp$ is a
regular ideal having trivial intersection with $B$, contradicting the
\iip.  Hence $J \subseteq L_K$.

If $J \neq L_K$, then the regular ideal $L = L_K \cap J^\perp$ is
non-zero.  Note that $L \subseteq L_K$ and $L \cap J = \{0\}$.  Hence
$L \cap B = \{0\}$.  This contradicts the \iip, and hence $J = L_K$.
It follows that the map $J \mapsto J \cap B$ is one-to-one with
inverse $K \mapsto L_K$.  That $L_K = K^{\perp\perp}$ follows from
\cite[Theorem~3.5]{Exel23}.
\end{proof}

\begin{remark}\label{rem: iip riip}
  The proof of Theorem~\ref{thm: 1-1 reg ideals pseudo} does not use
  the full power of the \iip.  Instead, it is only necessary to assume
  that all non-zero regular ideals of $A$ have non-trivial
  intersection with $B$.  That is, Theorem~\ref{thm: 1-1 reg ideals
    pseudo} could be stated with the assumption that $(A,B)$ has the
  \emph{\riip} in place of the \iip.  The difference (or agreement) of
  the \iip\ and the \riip\ is addressed in more detail in
  \cite[Section~7]{BFPR24}.
\end{remark}

\begin{theorem}\label{PseudoCarLatIso}
  If $(A,D)$ is a pseudo-Cartan inclusion then the Boolean algebras
  $\reg(A)$ and $\reginv_A(D)$ are isomorphic via the map
  $J \mapsto J \cap D$, with inverse given by
  $K \mapsto L_K = J_K^{\perp\perp} = K^{\perp\perp}$.
\end{theorem}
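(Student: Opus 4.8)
The plan is to deduce this theorem directly from Theorem~\ref{thm: 1-1 reg ideals pseudo}, applied to the inclusion $(A,D)$ with $B = D$ and $N = N(A,D)$. To do so I must verify the three hypotheses of that theorem: that $(A,D)$ is a regular inclusion, that it satisfies the \iip, and that there is an $N(A,D)$-invariant faithful pseudo-expectation $\Phi \colon A \to I(D)$. Regularity is immediate, since it is built into the definition of a pseudo-Cartan inclusion. By the equivalence (1)$\iff$(3) in Theorem~\ref{thm: pseudo-Cartan equiv}, $(A,D)$ has the faithful unique pseudo-expectation property; let $\Phi$ denote the unique (faithful) pseudo-expectation.

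The \iip\ then follows at once: since the unique pseudo-expectation $\Phi$ is faithful, \emph{every} pseudo-expectation $A \to I(D)$ is faithful, so Proposition~\ref{prop: iip and faithful pseudo} gives the \iip. (Alternatively, one may obtain the \iip\ by composing the \iip's of $(A,D^c)$ and $(D^c,D)$: a nonzero ideal $J \idealin A$ meets $D^c$ in a nonzero ideal of $D^c$, which in turn meets $D$.)

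The one substantive point is the $N(A,D)$-invariance of $\Phi$. When $A$ is unital this is exactly the content of the Example following Definition~\ref{def: N inv psuedo}, which applies because $D$ is abelian and $\Phi$ is the unique pseudo-expectation. For the general, possibly non-unital, case I would pass to the unitization. By condition (4) of Theorem~\ref{thm: pseudo-Cartan equiv}, $(\tilde A, \tilde D)$ is again pseudo-Cartan and now unital, so its unique pseudo-expectation $\tilde\Phi \colon \tilde A \to I(\tilde D) = I(D)$ is $N(\tilde A, \tilde D)$-invariant by the unital case. I would then check that invariance descends to $\Phi$. Uniqueness forces $\tilde\Phi|_A = \Phi$, since the restriction of $\tilde\Phi$ to $A$ is a pseudo-expectation $A \to I(D)$. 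Every $n \in N(A,D)$ lies in $N(\tilde A, \tilde D)$: as noted in Lemma~\ref{lem: hereditary subalg} we have $nn^* \in D$, whence $n\tilde D n^* \subseteq \tilde D$ and likewise $n^*\tilde D n \subseteq \tilde D$. Moreover, because $nn^* = |n^*|^2 \in \overline{nDn^*}$ by Lemma~\ref{lem: hereditary subalg}, we get $\overline{n\tilde D n^*} = \overline{nDn^*}$, so the structure projections agree and the maps $\tilde\theta_n$ of Lemma~\ref{lem: theta maps} computed for $(\tilde A, \tilde D)$ and for $(A,D)$ coincide on the relevant corner. Since $n^*an$ and $|n^*|a|n^*|$ lie in $A$ for $a \in A$, the invariance identity for $\tilde\Phi$ restricts to the invariance identity for $\Phi$. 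This matching of the $\tilde\theta_n$ maps under unitization is the step I expect to require the most care.

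With all three hypotheses verified, Theorem~\ref{thm: 1-1 reg ideals pseudo} (applied with $N = N(A,D)$) yields that the invariant regular ideals of $D$ form a Boolean algebra and that $J \mapsto J \cap D$ is a Boolean isomorphism from $\reg(A)$ onto $\reginv_A(D)$ with inverse $K \mapsto L_K = J_K^{\perp\perp} = K^{\perp\perp}$, which is precisely the assertion.
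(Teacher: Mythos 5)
Your proposal is correct and follows essentially the same route as the paper: both verify the hypotheses of Theorem~\ref{thm: 1-1 reg ideals pseudo} by obtaining the faithful unique pseudo-expectation from Theorem~\ref{thm: pseudo-Cartan equiv}, passing to the unitization $(\tilde A,\tilde D)$ to invoke \cite[Proposition~6.2]{Pit21} for invariance, and restricting back via $N(A,D)\subseteq N(\tilde A,\tilde D)$. The one place you add substance is in spelling out why the maps $\tilde\theta_n$ for $(\tilde A,\tilde D)$ and $(A,D)$ agree (via $\overline{n\tilde Dn^*}=\overline{nDn^*}$), a detail the paper leaves implicit in its ``hence $\Phi$ is invariant,'' and your verification of it is sound.
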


\begin{proof}
  A pseudo-Cartan inclusion has the \iip, and hence the \riip.  By
  Theorem~\ref{thm: pseudo-Cartan equiv}, $(A,D)$ has the faithful
  unique pseudo-expectation property.  Let
  $\Phi \colon A \rightarrow I(D)$ be the faithful unique
  pseudo-expectation and let $(\tilde{A}, \tilde{D})$ be the unitization
  of $(A,D)$.  Then $(\tilde{A}, \tilde{D})$ has the unique
  pseudo-expectation property by Theorem~\ref{thm: pseudo-Cartan
    equiv}, and
  $\tilde{\Phi} \colon \tilde{A} \rightarrow I(\tilde{D})$ is the
  faithful unique pseudo-expectation.  By
  \cite[Proposition~6.2]{Pit21}, $\tilde{\Phi}$ is invariant.  Since
  $D$ contains an approximate unit for $A$,
  $N(A,D) \subseteq N(\tilde{A},\tilde{D})$, and hence $\Phi$ is
  invariant.  The result thus follows immediately from
  Theorem~\ref{thm: 1-1 reg ideals pseudo}.
\end{proof}

\subsection{Regular ideals and Cartan envelopes}
Let $(A,D)$ be a pseudo-Cartan inclusion with Cartan envelope
$(A_1, D_1, \alpha)$.  We wish to prove  the
lattices $\reg(A)$, $\reginv_A(D)$, $\reg(A_1)$, and
$\reginv_{A_1}(D_1)$ are all isomorphic.  To complete this it suffices to prove the
$\reginv_A(D)$ and $\reginv_{A_1}(D_1)$ are isomorphic (note that
$\reg(A) \simeq \reginv_A(D)$, and
$\reg(A_1) \simeq \reginv_{A_1}(D_1)$ by Theorem~\ref{thm: 1-1 reg
  ideals pseudo} or \cite[Theorem~3.5]{Exel23}).

If $(A,B)$ is a regular inclusion of $\ca$-algebras, and $J \unlhd B$ is an invariant ideal, then \cite[Proposition~4.2]{Exel23} shows that
$$ \overline{\spn}\{ ax \colon a \in A,\ x\in J \} = \overline{\spn}\{
xa \colon a \in A,\ x\in J \}\quad\text{(cf.\ Definition~\ref{ExelInv})}.$$
When $B$ is an abelian $\ca$-algebra we can give a more explicit
description of this intertwining property.  We do this in the
following lemma, which will be useful in proving Theorem~\ref{thm: D
  lattice isomorphism}.

\begin{lemma}\label{lem: intertwine ideals}
  Let $(A,D)$ be an inclusion of $\ca$-algebras, with $D$ abelian.  If
  $J \unlhd D$ is an ideal and $n \in N(A,D)$ satisfies
  $nJn^* \subseteq J$ and $n^*Jn \subseteq J$, then
    \begin{enumerate}
    \item $Jn \subseteq \overline{nJ}$, and $nJ \subseteq \overline{Jn}$; 
    \item $nJ^\perp n^* \subseteq J^\perp$ and $n^*J^\perp n \subseteq J^\perp$.
    \end{enumerate}
 In particular, if $J \unlhd D$ is an invariant ideal then $J^\perp$ is an invariant ideal.
\end{lemma}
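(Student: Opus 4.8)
The plan is to establish (1) first by a direct functional-calculus computation, deduce (2) as a formal annihilator argument from (1), and then obtain the final assertion as the special case in which the hypotheses hold for every normalizer. Throughout I will use that $n^*n,nn^*\in D$ (since $D$ contains an approximate unit for $A$) and the normalizer intertwining identity $g(nn^*)\,n=n\,g(n^*n)$, valid for every continuous function $g$; this is checked on monomials and extended by continuity.

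For the inclusion $Jn\subseteq\overline{nJ}$ in (1), the easy step is that for $j\in J$ one has $(nn^*)jn=n(n^*jn)\in nJ$, using the hypothesis $n^*Jn\subseteq J$. Writing a polynomial $p$ with $p(0)=0$ as $p(t)=t\,q(t)$ and applying the intertwining identity then gives $p(nn^*)\,jn=n\,\big(q(n^*n)\,n^*jn\big)\in nJ$, so by continuity $f(nn^*)\,jn\in\overline{nJ}$ for every continuous $f$ vanishing at $0$. The hard part will be recovering $jn$ itself, since $nn^*$ is not invertible, and this is precisely where abelianness of $D$ is needed. I would take $f_k(t)=t^{1/k}$, so that $f_k(0)=0$ and $f_k\to 1$ pointwise on $(0,\infty)$; the spectral calculus gives $\|(1-f_k(nn^*))\,n\|^2=\sup_{t\in\sigma(nn^*)}(1-t^{1/k})^2\,t\to 0$. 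Since $D$ is abelian, $j$ commutes with $f_k(nn^*)$, whence $(1-f_k(nn^*))jn=j\,(1-f_k(nn^*))\,n\to 0$, and therefore $jn=\lim_k f_k(nn^*)jn\in\overline{nJ}$. The companion inclusion $nJ\subseteq\overline{Jn}$ then follows by applying the above to $n^*$ (which satisfies the same two hypotheses) and taking adjoints, using $J=J^*$.

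With (1) established, (2) is purely formal. For $a\in J^\perp$ I would compute $(nan^*)J=na(n^*J)$ and invoke $n^*J\subseteq\overline{Jn^*}$ (part (1) for $n^*$) together with $aJ=0$ to get $(nan^*)J=0$; symmetrically $J(nan^*)=(Jn)an^*$, where $Jn\subseteq\overline{nJ}$ and $Ja=0$ force $J(nan^*)=0$. Hence $nan^*\in J^\perp$, and $n^*J^\perp n\subseteq J^\perp$ is the same statement for $n^*$. Finally, for the concluding claim: if $J$ is invariant, then $nJn^*\subseteq J$ for all $n\in N(A,D)$ and, applying invariance to $n^*$, also $n^*Jn\subseteq J$, so (2) applies to every normalizer; restricting to $D$ and using $nDn^*\subseteq D$ shows that $J^\perp\cap D$ is carried into itself by every $n$, i.e.\ it is again an invariant ideal.
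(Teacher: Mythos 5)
Your proposal is correct and takes essentially the same route as the paper's proof: part (1) via functional calculus with $f_k(t)=t^{1/k}$ applied to $nn^*$, using that $j$ commutes with $f_k(nn^*)$ because $D$ is abelian and that $n^*Jn\subseteq J$ pulls everything into $nJ$; part (2) as a formal annihilator consequence of (1); and the final claim by applying (2) to every normalizer. Your added details --- the intertwining identity $g(nn^*)n=ng(n^*n)$, the explicit spectral estimate $\sup_{t\in\sigma(nn^*)}(1-t^{1/k})^2t\to 0$, checking both sides of the annihilation in (2), and intersecting with $D$ at the end --- simply spell out steps the paper compresses into ``$an=\lim_k a(nn^*)^{1/k}n$'' and ``a similar argument shows.''
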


\begin{proof}
    Take $a \in J$. Then
    $$ an = \lim_k a (nn^*)^{1/k}n = \lim_k (nn^*)^{1/k}an.$$
    Note that, for all $k$, $(nn^*)^{1/k}a \in \overline{nn^* J}.$
    Thus $an \in \overline{nn^* J n}\subseteq \overline{nJ}.$
    A similar argument shows that $nJ \subseteq \overline{Jn}$.

    Now take any $b \in J^\perp$ and $a \in J$.
    Choose a sequence $(h_k)$ in $J$ such that $an = \lim_k n h_k$.
    Then 
    $$ a (n b n^*) = \lim_k (n h_k) b n^* =  0.$$
    Thus $n J^\perp n^* \subseteq J^\perp$.
    It can be similarly be shown that $n^* J^\perp n \subseteq J^\perp$.
\end{proof}

\begin{theorem}\label{thm: D lattice isomorphism}
  Let $(A,D)$ be a pseudo-Cartan inclusion with Cartan envelope
  $(A_1,D_1,\alpha)$.  The map $J \mapsto \alpha(J)^{\perp\perp}$ is
  an isomorphism from the lattice $\reginv_A(D)$ of invariant regular
  ideals of $D$ to the lattice $\reginv_{A_1}(D_1)$ of invariant
  regular ideals of $D_1$.  The inverse map is given by
  $K \mapsto \alpha^{-1}(K)$.
\end{theorem}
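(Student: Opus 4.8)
The plan is to establish that the map $J \mapsto \alpha(J)^{\perp\perp}$ is a well-defined order-isomorphism by first verifying it lands in the correct target lattice and then exhibiting $K \mapsto \alpha^{-1}(K)$ as a two-sided inverse. The main structural fact I would exploit is that $(A_1,D_1)$ is a Cartan inclusion, so by Theorem~\ref{thm: pseudo-Cartan equiv} it has a faithful conditional expectation $E_1 \colon A_1 \rightarrow D_1$, and hence Theorem~\ref{thm: 1-1 reg ideals pseudo} (or \cite[Theorem~3.5]{Exel23}) applies to give a Boolean isomorphism $\reg(A_1) \simeq \reginv_{A_1}(D_1)$ via intersection with $D_1$. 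The key geometric input relating the two inclusions is that $\alpha$ is a regular expansion, so $\alpha(N(A,D)) \subseteq N(A_1,D_1)$, which lets invariance under normalizers of $D$ transfer to invariance under the relevant normalizers of $D_1$.

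First I would check well-definedness: given an invariant regular ideal $J \unlhd D$, I would show $\alpha(J)^{\perp\perp}$ (the double-annihilator taken inside $D_1$) is a regular ideal of $D_1$, which is automatic since any double-annihilator is regular, and then show it is $N(A_1,D_1)$-invariant. For the invariance I would take a normalizer $m \in N(A_1,D_1)$; the subtlety is that $m$ need not lie in $\alpha(N(A,D))$, so I cannot directly use $J$'s invariance. The cleanest route is to pass through the conditional expectation picture: using Theorem~\ref{thm: 1-1 reg ideals pseudo} for $(A_1,D_1)$, invariant regular ideals of $D_1$ correspond to regular ideals of $A_1$, and I would instead track the corresponding ideal $\alpha(J)^{\perp\perp_{A_1}}$ upstairs in $A_1$. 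Concretely, I would argue $\alpha(J)^{\perp\perp}$ (in $D_1$) equals $(\tilde J \cap D_1)$ where $\tilde J \unlhd A_1$ is the regular ideal generated appropriately, and then invariance is inherited from the $A_1$-side correspondence.

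For the inverse, given an invariant regular ideal $K \unlhd D_1$, I would show $\alpha^{-1}(K) = \{d \in D : \alpha(d) \in K\}$ is an invariant regular ideal of $D$; invariance here \emph{does} follow directly from $\alpha(N(A,D)) \subseteq N(A_1,D_1)$, since for $n \in N(A,D)$ and $d \in \alpha^{-1}(K)$ we get $\alpha(n^*dn) = \alpha(n)^*\alpha(d)\alpha(n) \in K$ by $K$'s invariance under $\alpha(n) \in N(A_1,D_1)$. Regularity of $\alpha^{-1}(K)$ in $D$ I would deduce from Proposition~\ref{prop: int reg pseudo} applied to the inclusion $(A_1,D)$ (or $(D_1,D)$), using that the faithful pseudo-expectation on the Cartan envelope restricts to witness the \iip\ for $(D_1, \alpha(D))$, which is exactly Cartan-envelope axiom \eqref{def: Cartan env3}.

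The two composite identities $\alpha^{-1}(\alpha(J)^{\perp\perp}) = J$ and $\alpha(\alpha^{-1}(K))^{\perp\perp} = K$ are where I expect the real work to lie, and this is the main obstacle. The first requires showing $\alpha(J)^{\perp\perp} \cap \alpha(D) = \alpha(J)$ as subsets of $D_1$, i.e.\ that passing to the double-annihilator in $D_1$ does not enlarge $J$ after pulling back; this should follow from $J$ being regular in $D$ together with the \iip\ for $(D_1,\alpha(D))$, via an argument parallel to the injectivity proof in Theorem~\ref{thm: 1-1 reg ideals pseudo} (if $J \subsetneq \alpha^{-1}(\alpha(J)^{\perp\perp})$, produce a nonzero regular ideal of $D_1$ meeting $\alpha(D)$ trivially, contradicting axiom~\eqref{def: Cartan env3}). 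The second identity requires that $K$, being generated over $\alpha(D)$ in the appropriate sense, is recovered as the double-annihilator of its pullback; here I would use that $K$ is invariant and regular in $D_1$ together with the density of $\alpha(D)$-bimodule combinations guaranteed by $D_1 = C^*(E_1(\alpha(A)))$, so that $K$ is determined by $K \cap \alpha(D) = \alpha(\alpha^{-1}(K))$ and the double-annihilator reconstructs it. Finally, order-preservation in both directions is immediate from monotonicity of $\alpha$, $\alpha^{-1}$, and the double-annihilator operation, which upgrades the bijection to a lattice isomorphism.
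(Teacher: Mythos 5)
Your skeleton is right and your backward direction matches the paper exactly: invariance of $\alpha^{-1}(K)$ does follow directly from $\alpha(N(A,D))\subseteq N(A_1,D_1)$. But there is a genuine gap at precisely the point you flag yourself, namely invariance of $J_1:=\alpha(J)^{\perp\perp_{D_1}}$ under \emph{all} of $N(A_1,D_1)$, and your proposed fix does not close it. You propose realizing $J_1$ as $\tilde J\cap D_1$ for a regular ideal $\tilde J\unlhd A_1$ ``generated appropriately'' and then inheriting invariance from the correspondence $\reg(A_1)\simeq\reginv_{A_1}(D_1)$. The inheritance part is fine, but the identification is unproved and is where all the work lives: note first that $\alpha(J)$ is an ideal of $\alpha(D)$ but not of $A_1$, so $\alpha(J)^{\perp_{A_1}}$ need not be an ideal and $\alpha(J)^{\perp\perp_{A_1}}$ is not obviously a regular ideal of $A_1$. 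If you instead take $\tilde J$ to be the smallest regular ideal of $A_1$ containing $\alpha(J)$, you get $J_1\subseteq \tilde J\cap D_1$ for free, but the reverse inclusion requires showing that every $x\in\alpha(J)^{\perp_{D_1}}$ annihilates $\overline{A_1\,\alpha(J)\,A_1}$, i.e.\ $x\,m\,\alpha(j)=0$ for normalizers $m$ of $D_1$ --- and this is exactly the intertwining fact your proposal never establishes. The paper supplies it via Lemma~\ref{lem: intertwine ideals}: from $nJ\subseteq\overline{Jn}$ and $Jn\subseteq\overline{nJ}$ one gets $\alpha(n)^*\alpha(J)^\perp\alpha(n)\subseteq\alpha(J)^\perp$, whence $\alpha(n)^*J_1\alpha(n)\subseteq J_1$ for $n\in N(A,D)$. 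Even then a second ingredient is needed, since (as you correctly observe) a general $m\in N(A_1,D_1)$ need not lie in $\alpha(N(A,D))$: the paper verifies invariance only under the $*$-semigroup $M$ generated by $\alpha(N(A,D))\cup D_1$, notes that $\spn M$ is dense in $A_1$ by Definition~\ref{def: Cartan env}\eqref{def: Cartan env2}, and invokes \cite[Corollary~3.20]{BFPR24} to upgrade invariance under a generating $*$-semigroup to invariance under all of $N(A_1,D_1)$. Your proposal contains no counterpart to either step, so the forward direction does not close.

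A secondary issue is that you mislocate the ``real work.'' The two composite identities you single out as the main obstacle come essentially for free: $(D_1,\alpha(D))$ is an inclusion of \emph{abelian} $\ca$-algebras, so it satisfies condition \inv\ by Example~\ref{INVex}\eqref{INVex1}, and it has the \iip\ by Definition~\ref{def: Cartan env}\eqref{def: Cartan env3}; hence \cite[Theorem~3.5]{Exel23} applied to this inclusion already gives that $J\mapsto\alpha(J)^{\perp\perp_{D_1}}$ is a bijection from $\reg(D)$ onto $\reg(D_1)$ with inverse $K\mapsto\alpha^{-1}(K)$ (in an abelian inclusion every ideal is invariant). This is the paper's one-line foundation, and it also yields regularity of $\alpha^{-1}(K)$ without your detour through Proposition~\ref{prop: int reg pseudo}, which would in any case require you first to produce a faithful pseudo-expectation for $(D_1,\alpha(D))$ --- a step you leave unaddressed. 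Finally, your appeal to $D_1=C^*(E_1(\alpha(A)))$ to recover $K$ from $K\cap\alpha(D)$ is the wrong tool: $\ca$-generation does not show a regular ideal equals the double annihilator of its intersection with a subalgebra; the correct mechanism is again the \iip\ annihilator argument (if $K\neq(K\cap\alpha(D))^{\perp\perp_{D_1}}$, then $K\cap(K\cap\alpha(D))^{\perp_{D_1}}$ is a nonzero ideal of $D_1$ meeting $\alpha(D)$ trivially).
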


\begin{proof}
  Since $\alpha(D)$ has the \iip\ in $D_1$, \cite[Theorem~3.5]{Exel23}
  shows that the map $J \mapsto \alpha(J)^{\perp\perp}$ is an
  isomorphism from the lattice of regular ideals in $D$ to the regular
  ideals in $D_1$.  The inverse of this map is given by
  $\alpha^{-1}(K)$ for a regular ideal $K$ in $D_1$.  We will show
  that the restriction of this map to the invariant regular ideals of
  $D$ maps onto the invariant regular ideals of $D_1$.

Let $J \unlhd D$ be a regular, invariant ideal and put
 $$J_1:= \alpha(J)^{\perp\perp}.$$
Then $J_1$ is a regular ideal in $D_1$.    We must show $J_1$ is an
invariant ideal.
To do this, take $k \in \alpha(J)^\perp$, $n \in N(A,D)$ and $a \in J$.  By
Lemma~\ref{lem: intertwine ideals}, there is a sequence $(h_m)_m$ in
$J$ such that $na = \lim_m h_m n$.  Then
\begin{align*}
    \alpha(n)^* k \alpha(n) \alpha(a) & =  \alpha(n)^* k \alpha(na) \\
    &= \lim_m \alpha(n)^* k \alpha(h_m n) \\
    &= \lim_m \alpha(n)^* k \alpha(h_m) \alpha(n) \\
    &= 0.
\end{align*}
Thus $\alpha(n)^* k \alpha(n) \alpha(J)=0$.  Similarly,
$\alpha(J)\alpha(n)^* k \alpha(n)=0$.  Thus, for all $n \in N(A,D)$,
$$ \alpha(n)^* \alpha(J)^\perp \alpha(n)^* \subseteq \alpha(J)^\perp.$$

Now take $a_1 \in J_1$, $x \in \alpha(J)^\perp$ and $n \in N(A,D)$.
By Lemma~\ref{lem: intertwine ideals}, there is a sequence $(x_m)_m$ in $\alpha(J)^\perp$ such that $\alpha(n)x = \lim_m x_m \alpha(n)$.
Thus
\begin{align*}
    \alpha(n)^* a_1 \alpha(n) x &= \lim_m \alpha(n)^* a x_m \alpha(n)^* \\ 
    &= 0.
\end{align*}
Hence $J_1$ is invariant for $\alpha(n)$ for all $n \in N(A,D)$.  As
$J_1 \unlhd D_1$, we also have that $dJ_1d^* \subseteq J_1$ for all
$d \in D_1$.  Hence $J_1$ is invariant under the $*$-semigroup
generated by $\alpha(N(A,D))\cup D_1$.  We call this semigroup $M$.
By definition of the Cartan envelope, $\spn M$ is dense in $A_1$.
Hence, by
\cite[Corollary~3.20]{BFPR24}, $J_1$ is an invariant ideal.

Next we show that the inverse map, $ J_1\mapsto \alpha^{-1}(J_1)$,
carries invariant regular ideals of $D_1$ to invariant regular ideals
in $D$.  Fix an invariant regular ideal $J_1 \unlhd D_1$ and let
$J = \alpha^{-1}(J_1)$.  From the definition of Cartan envelope,
$(D_1, \alpha(D))$ has the \iip, 
and $(D_1, \alpha(D))$ satisfies
\inv\  by Example~\ref{INVex}\eqref{INVex1}.
By~\cite[Theorem~3.5]{Exel23}, $J$ is a
regular ideal in $D$.
It remains to show that $J$ is invariant.
For $n \in N(A,D)$, 
\begin{align*}
    \alpha(nJn^*) &= \alpha(n) \alpha(J) \alpha(n)^* \\
    & \subseteq \alpha(J)^{\perp\perp}=J_1.
\end{align*}
Therefore $nJn^* \subseteq J$, and hence $J$ is an invariant ideal.
\end{proof}

We now have all the ingredients to prove the main theorem of this subsection.

\begin{theorem}\label{thm: all latice isomorphisms}
Let $(A,D)$ be a pseudo-Cartan inclusion with Cartan envelope $(A_1,D_1,\alpha)$.
Then the following lattices of ideals are isomorphic
$$  \reg(A) \simeq \reginv_A(D) \simeq \reginv_{A_1}(D_1) \simeq \reg(A_1)$$ 
\end{theorem}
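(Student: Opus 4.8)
The plan is to read the displayed chain from left to right and recognize each isomorphism as an instance of a result already established; since the substantive work is done in the preceding theorems, the proof amounts to assembling them and checking that the hypotheses hold at each stage.

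First I would obtain the leftmost isomorphism $\reg(A) \simeq \reginv_A(D)$ directly from Theorem~\ref{PseudoCarLatIso}: as $(A,D)$ is pseudo-Cartan, the map $J \mapsto J \cap D$ is a Boolean isomorphism with inverse $K \mapsto K^{\perp\perp}$. The middle isomorphism $\reginv_A(D) \simeq \reginv_{A_1}(D_1)$ is exactly Theorem~\ref{thm: D lattice isomorphism}, realized by $J \mapsto \alpha(J)^{\perp\perp}$ with inverse $K \mapsto \alpha^{-1}(K)$. For the rightmost isomorphism $\reginv_{A_1}(D_1) \simeq \reg(A_1)$, I would note that $(A_1,D_1)$ is a Cartan inclusion by Definition~\ref{def: Cartan env}\eqref{def: Cartan env1}, hence in particular pseudo-Cartan, so Theorem~\ref{PseudoCarLatIso} applied to $(A_1,D_1)$ delivers the isomorphism; alternatively one may cite \cite[Corollary~4.5]{Exel23}, since the Cartan inclusion $(A_1,D_1)$ satisfies both the \iip\ and condition \inv. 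Composing the three links then yields the full chain.

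The only step needing a word of care---and the closest thing to an obstacle---is confirming that $(A_1,D_1)$ meets the hypotheses for the last link. This is immediate: a Cartan inclusion is in particular a pseudo-Cartan inclusion, so by Theorem~\ref{thm: pseudo-Cartan equiv} it has the \iip\ and the faithful unique pseudo-expectation property, and the remark following that theorem guarantees that its faithful conditional expectation induces a faithful invariant pseudo-expectation. Thus all the standing assumptions of Theorem~\ref{PseudoCarLatIso} are satisfied for $(A_1,D_1)$, and the chain of isomorphisms is complete.
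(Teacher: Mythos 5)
Your proof is correct and follows essentially the same route as the paper: the paper likewise composes the isomorphism $J \mapsto J \cap D$ from Theorem~\ref{thm: 1-1 reg ideals pseudo} (of which Theorem~\ref{PseudoCarLatIso} is the pseudo-Cartan specialization you invoke), the map $K \mapsto \alpha(K)^{\perp\perp_{D_1}}$ from Theorem~\ref{thm: D lattice isomorphism}, and $K \mapsto L_K = K^{\perp\perp}$ for the Cartan inclusion $(A_1,D_1)$. One cosmetic note: your final sentence is unnecessary (Theorem~\ref{PseudoCarLatIso} requires only that $(A_1,D_1)$ be pseudo-Cartan, which a Cartan inclusion is), and the remark following Theorem~\ref{thm: pseudo-Cartan equiv} concerns unitizations and uniqueness of the Cartan envelope rather than invariance of the pseudo-expectation --- invariance is verified inside the proof of Theorem~\ref{PseudoCarLatIso} via the unitization argument and \cite[Proposition~6.2]{Pit21}, so nothing is missing.
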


\begin{proof}
  The result follows immediately from Theorem~\ref{thm: 1-1 reg ideals
    pseudo} (or \cite[Corollary~4.5]{Exel23}) and Theorem~\ref{thm: D
    lattice isomorphism}.  Explicitly, the isomorphisms are given by:
\begin{align*}
    \Theta_1  \colon & \reg(A) \longrightarrow \reginv_A(D) \\
    &  J \longmapsto J \cap D \\
    \Theta_2  \colon  & \reginv_A(D) \longrightarrow \reginv_{A_1}(D_1) \\
    &  K \longmapsto \alpha(K)^{\perp\perp_{D_1}} \\
    \Theta_3  \colon  & \reginv_{A_1}(D_1) \longrightarrow \reg(A_1) \\
    &  K \longmapsto L_K = K^{\perp\perp}.\qedhere
\end{align*}
\end{proof}

\section{Quotients by regular ideals}\label{sec: quotients}
Suppose $(A,B)$ is a unital inclusion and $J\idealin A$ is such that
$J\cap B\in \reg(B)$.  By~\cite[Theorem~3.8]{PitZar15}, if $(A,B)$ has
the unique pseudo-expectation property, so does the quotient inclusion
$(A/J, B/(B\cap J))$.  However, \cite[Example~4.3]{PitZar15} shows
that it is possible that $(A/J, B/(J\cap B))$ does not have a
\emph{faithful} unique pseudo-expectation, even if the
pseudo-expectation for $(A,B)$ is faithful.  The purpose of this
section is to explore the behaviour of the faithful unique
pseudo-expectation property under the quotient  by an ideal $J$
satisfying various hypotheses. 

In Lemma~\ref{lem: PZ
  quotient} we extend~\cite[Theorem~3.8]{PitZar15}
to not-necessarily unital inclusions.  Then in
Theorem~\ref{thm: when quot has faithful} we give necessary and
sufficient conditions for $(A/J,B/(J\cap B))$ to have the faithful
unique pseudo-expectation property, provided $(A,B)$ has the faithful unique
pseudo-expectation property.  It follows that the faithful unique
pseudo-expectation property is preserved by quotients by regular
ideals in both regular inclusions and inclusions of abelian
$\ca$-algebras.
We end the section by returning to pseudo-Cartan inclusions.  In
Theorem~\ref{thm: pseudo cartan quotient} we show that pseudo-Cartan
inclusions are preserved by quotients by regular ideals.  Further, the
Cartan envelope of the quotient is a quotient of the original Cartan
envelope.

The proof of the following lemma follows the same reasoning as
\cite[Theorem~3.8]{PitZar15}.  The proof is included here for
completeness.  Note that in \cite{PitZar15} the $C^*$-algebras are
assumed to be unital.  We do not make that assumption here.

\begin{lemma}\label{lem: PZ quotient}
  Let $(A,B)$ be an inclusion having the unique pseudo-expectation
  property and let $\Phi \colon A \rightarrow I(B)$ be the
  pseudo-expectation.  If $J \unlhd A$ is an ideal such that
  $J \cap B$ is a regular ideal in $B$ with structure projection $p$,
  then $(A/J, B/(J \cap B))$ has the unique pseudo-expectation
  property.  Furthermore, the pseudo-expectation for
  $(A/J, B/(J\cap B))$ is given by $a+J\mapsto \Phi(a) p^\perp$.
\end{lemma}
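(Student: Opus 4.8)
The plan is to construct a candidate pseudo-expectation for the quotient inclusion, verify it is indeed a pseudo-expectation, and then invoke the unique pseudo-expectation property of $(A,B)$ to force any other pseudo-expectation on $(A/J, B/(J\cap B))$ to coincide with it. The first step is to identify the injective envelope of the quotient $B/(J\cap B)$. Since $J\cap B$ is a regular ideal with structure projection $p$, Lemma~\ref{lem: support projections} gives $pI(B)=\iota(J\cap B)^{\perp\perp}$, and the complementary corner $p^\perp I(B)p^\perp$ should serve as an injective envelope for the quotient $B/(J\cap B)$. More precisely, I expect that $b+(J\cap B)\mapsto \iota(b)p^\perp$ embeds $B/(J\cap B)$ rigidly into $p^\perp I(B)p^\perp$, so that we may identify $I(B/(J\cap B))$ with $p^\perp I(B)p^\perp$; this is essentially the content of the Hamana-style correspondence between regular ideals and central projections in the injective envelope.

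Next I would define the candidate map $\Psi\colon A/J\to p^\perp I(B)p^\perp$ by $\Psi(a+J)=\Phi(a)p^\perp=p^\perp\Phi(a)p^\perp$, where the second equality uses that $p$ is central in $I(B)$. The key well-definedness check is that $\Psi$ vanishes on $J$: for $x\in J$ we have $x^*x\in J$, and I would argue $\Phi(x^*x)\in pI(B)$, i.e.\ $\Phi(x^*x)p^\perp=0$. To see this, observe that $J\cap B$ is regular with structure projection $p$; taking an increasing approximate unit $(b_\lambda)$ for $J\cap B$, Lemma~\ref{lem: support projections} together with the $\textsc{wot}$-limit description $p=\sup_\lambda\iota(b_\lambda)$ (as in the footnote to Definition~\ref{def: N inv psuedo}, using \cite[Lemma~1.1]{Ham82}) lets me compute $\Phi(x)p^\perp$ as a limit of $\Phi(x)\iota(b_\lambda)^{\perp}$-type terms. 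The cleanest route is to show $\Phi(x)=\Phi(x)p$ for $x\in J$ by first establishing it for $x\in J\cap B$ (where $\Phi=\iota$ and $\iota(b)=\iota(b)p$ is immediate from $b\in J\cap B$ and $pI(B)=\iota(J\cap B)^{\perp\perp}$), then extending to all of $J$ using that $\Phi$ is a $B$-bimodule map (Lemma~\ref{lem: bimod}) and that $J$ is generated over $A$ by approximate-unit elements of $J\cap B$. Once $\Psi$ annihilates $J$, it descends to a well-defined completely positive contraction on $A/J$, and $\Psi$ restricts to $\iota$ on $B/(J\cap B)$ by construction, so $\Psi$ is a pseudo-expectation.

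For uniqueness, suppose $\Psi'\colon A/J\to p^\perp I(B)p^\perp$ is any pseudo-expectation for $(A/J, B/(J\cap B))$. Composing with the quotient map $q\colon A\to A/J$ gives a completely positive contraction $\Psi'\circ q\colon A\to p^\perp I(B)p^\perp\subseteq I(B)$. This map restricts to $b\mapsto \iota(b)p^\perp$ on $B$, which is not quite $\iota$, so I cannot directly invoke uniqueness for $(A,B)$. The fix is to add back the ``$p$-part'': I would consider the map $a\mapsto \Psi'(q(a))+\Phi(a)p$, or more carefully build a pseudo-expectation $\hat\Psi\colon A\to I(B)$ whose $p^\perp$-corner agrees with $\Psi'\circ q$ and whose $p$-corner agrees with $\Phi$, and check $\hat\Psi|_B=\iota$. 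Since $(A,B)$ has a \emph{unique} pseudo-expectation, $\hat\Psi=\Phi$, and comparing $p^\perp$-corners yields $\Psi'\circ q=\Phi(\cdot)p^\perp=\Psi\circ q$, hence $\Psi'=\Psi$.

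The main obstacle will be the bookkeeping around the corners of the injective envelope: verifying that $p^\perp I(B)p^\perp$ really is (a copy of) the injective envelope of $B/(J\cap B)$, and assembling the two corner maps $\Psi'\circ q$ and $\Phi(\cdot)p$ into a single genuine pseudo-expectation $\hat\Psi$ on $A$ that restricts to $\iota$ on $B$. The centrality of $p$ in $I(B)$ is what makes the corner decomposition $x=pxp+p^\perp x p^\perp$ clean for elements in the relevant ranges, and the bimodule identity of Lemma~\ref{lem: bimod} is what controls how $\Phi$ interacts with $p$; getting these compatibility checks right, so that the glued map is completely positive with the correct restriction to $B$, is the delicate part. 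Everything else is a direct consequence of the regular-ideal/structure-projection dictionary already recorded in Lemma~\ref{lem: support projections}.
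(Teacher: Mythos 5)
Your overall architecture is the paper's: identify $I(B/(J\cap B))$ with $I(B)p^\perp$ via $b+(J\cap B)\mapsto \iota(b)p^\perp$, and run the gluing-plus-uniqueness argument $\hat\Psi(a)=\Psi'(q(a))+\Phi(a)p$ to force any quotient pseudo-expectation to equal $a+J\mapsto\Phi(a)p^\perp$. That second half is correct and is precisely how the paper proceeds. However, your first half --- the direct construction of the candidate $\Psi$ and the verification that it annihilates $J$ --- contains a genuine gap. You propose to prove $\Phi(x)=\Phi(x)p$ for all $x\in J$ by establishing it on $J\cap B$ and then ``extending to all of $J$ using that $\Phi$ is a $B$-bimodule map and that $J$ is generated over $A$ by approximate-unit elements of $J\cap B$.'' That generation claim is false in general: the lemma assumes no ideal intersection property, so $J\cap B$ need not generate $J$; indeed $J\cap B$ can be $\{0\}$ (which is a regular ideal of $B$, with $p=0$) while $J\neq\{0\}$. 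In that situation your bimodule/approximate-unit computation produces nothing, yet the lemma still asserts that $a+J\mapsto \Phi(a)$ is well defined, i.e.\ $\Phi(J)=\{0\}$ --- a fact that cannot be extracted from $J\cap B$ alone and genuinely requires the uniqueness hypothesis (compare Proposition~\ref{prop: iip and faithful pseudo}: when $J\cap B=\{0\}$ there \emph{exists} a pseudo-expectation annihilating $J$, and only uniqueness identifies it with $\Phi$).

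The repair is to notice that the flawed step is dispensable, and this is exactly the route the paper takes: it never constructs $\Psi$ directly. Instead, since $I(B)p^\perp$ is injective, \emph{some} pseudo-expectation $\theta\colon A/J\to I(B)p^\perp$ for $(A/J,B/(J\cap B))$ exists automatically (\cite[Lemma~2.3.8]{Pit25}, quoted in Section~\ref{Sec: prelim}). Applying your own gluing argument to this arbitrary $\theta$ --- i.e.\ checking that $\Theta(a)=\theta(a+J)+\Phi(a)p$ is a pseudo-expectation for $(A,B)$ and invoking $\Theta=\Phi$ --- yields $\theta(a+J)=\Phi(a)p^\perp$ in one stroke. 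This simultaneously delivers existence of the quotient pseudo-expectation, the stated formula, its well-definedness (in particular $\Phi(J)p^\perp=\{0\}$ falls out \emph{a posteriori}), and uniqueness. So: delete your construction-and-well-definedness paragraph, replace ``construct $\Psi$'' with ``take any pseudo-expectation $\theta$,'' and your proof becomes the paper's.
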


\begin{proof}
  Let $$K = J \cap B$$ and let $p$ be the structure projection for $K$
  in $I(B)$.  Note that, by Lemma~\ref{lem: support projections},
  $K = \{b \in B \colon \iota(b)p = \iota(b)\}$.  Define a map
    \begin{equation*}
    \psi \colon B/ K  \longrightarrow \iota(B)p^\perp \quad \text{by} \quad
    b + K  \longmapsto \iota(b)p^\perp.
    \end{equation*}
    Note that $\psi$ is well-defined. 
    Indeed, if $b_1 = b_2 + k$ for some $b_1, b_2 \in B$ and $k \in K$ then
    $$ \iota(b_2)p^\perp = (\iota(b_2) + \iota(k))p^\perp = \iota(b_1)p^\perp.$$

    We now show $\psi$ is a $*$-isomorphism.
    Since $K=\{b\in B: \iota(b)p=\iota(b)\}$, $\psi$ is surjective.
    To see that $\psi$ is one-to-one, note that for $b\in B$,
\[\psi(b+K)=0\iff \iota(b)p^\perp=0\iff b\in K.\]

Thus $\psi$ extends to a $*$-isomorphism $\tilde{\psi}$
between the injective envelopes:
    $$ \tilde{\psi} \colon I(B/K) \rightarrow I( \iota(B)p^\perp) \simeq I(B)p^\perp.$$
    Therefore, $(I(B)p^\perp, \psi)$ is an injective envelope for
    $B/K$. 

    Let $\Phi \colon A \rightarrow I(B)$ be the unique pseudo-expectation, and let
    $$\theta \colon A/J \rightarrow I(B)p^\perp\simeq I(B/K)$$
    be any pseudo-expectation for the inclusion $(A/K, B/K)$.
    Note that, since $\theta$ is a pseudo-expectation,
    $$ \theta(b + J) = \psi(b+J) = \iota(b)p^\perp$$
    for any $b \in B$.
    Now define
    \begin{align*}
        \Theta \colon A &\longrightarrow I(B)\\
        a &\longmapsto \theta(a +J) + \Phi(a)p.
    \end{align*}
    Note that $\Theta$ is pseudo-expectation for the inclusion $B \subseteq A$.
    As $\Phi$ is the unique pseudo-expectation, we must have $\Theta = \Phi$.
    Thus, $\theta(a+J) = \Phi(a)p^\perp$ for all $a+J \in A/J$.
    That is, the pseudo-expectation $\theta$ is unique.
\end{proof}

We now consider the behavior of the faithful unique pseudo-expectation property under quotients. 

\begin{theorem}\label{thm: when quot has faithful}
  Suppose $(A,B)$ is a inclusion of $\ca$-algebras having the faithful
  unique pseudo-expectation property and let $\Phi$ be the pseudo-expectation.
  Let $J \unlhd A$ be an ideal such that $K:= J \cap B$  is a regular
  ideal of $B$.  Then $(A/J,B/K)$ has the faithful unique
  pseudo-expectation property if and only if
$$ J = L_K = \{a \in A \colon \Phi(a^*a) \in \iota(K)^{\perp\perp}\}.$$

In particular,  if $(A/J,B/K)$ has the faithful unique pseudo-expectation property, then $L_K$ must be an ideal of $A$.
\end{theorem}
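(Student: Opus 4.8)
The plan is to reduce the statement to the explicit formula for the quotient pseudo-expectation provided by Lemma~\ref{lem: PZ quotient}, and then to read off faithfulness directly. Since $(A,B)$ has the unique pseudo-expectation property and $K = J \cap B$ is a regular ideal of $B$ with structure projection $p \in I(B)$, Lemma~\ref{lem: PZ quotient} applies and tells us that $(A/J, B/K)$ has the unique pseudo-expectation property, with unique pseudo-expectation $\theta(a+J) = \Phi(a)p^\perp$ mapping into $I(B)p^\perp \cong I(B/K)$. Hence $(A/J,B/K)$ has the \emph{faithful} unique pseudo-expectation property if and only if this specific $\theta$ is faithful, and the whole theorem becomes the computation of exactly when $\theta$ is faithful.

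First I would translate faithfulness of $\theta$ into an inclusion of ideals. For $a\in A$,
$$\theta\big((a+J)^*(a+J)\big) = \theta(a^*a + J) = \Phi(a^*a)\,p^\perp.$$
Because $K \unlhd B$, its structure projection $p$ is central in $I(B)$, and by Lemma~\ref{lem: support projections} we have $pI(B) = \iota(K)^{\perp\perp}$. Therefore, for each $a$,
$$\Phi(a^*a)\,p^\perp = 0 \iff \Phi(a^*a)\in pI(B) = \iota(K)^{\perp\perp} \iff a \in L_K.$$
Thus the set of $a\in A$ on which $\theta\big((a+J)^*(a+J)\big)$ vanishes is precisely $L_K$, while $a+J$ is the zero element of $A/J$ precisely when $a\in J$. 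It follows that $\theta$ is faithful if and only if $L_K \subseteq J$.

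It remains to observe that the reverse containment $J \subseteq L_K$ holds unconditionally. If $a\in J$, then $a^*a\in J$, so $\theta(a^*a+J)=\theta(0)=0$; the displayed identity then gives $\Phi(a^*a)\in\iota(K)^{\perp\perp}$, i.e.\ $a\in L_K$. Combining the two inclusions, $\theta$ is faithful if and only if $J = L_K$, which is the asserted equivalence, and the final ``in particular'' clause is immediate since $L_K = J$ is then an ideal. The argument is essentially bookkeeping once Lemma~\ref{lem: PZ quotient} is available; the only step needing care is the equivalence $\Phi(a^*a)p^\perp = 0 \iff \Phi(a^*a)\in\iota(K)^{\perp\perp}$, which relies on the centrality of the structure projection of the ideal $K$ together with the identification $pI(B)=\iota(K)^{\perp\perp}$, and I do not anticipate a genuine obstacle there.
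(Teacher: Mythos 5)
Your proof is correct and follows essentially the same route as the paper: both apply Lemma~\ref{lem: PZ quotient} to obtain the unique quotient pseudo-expectation $a+J \mapsto \Phi(a)p^\perp$ and then identify its left kernel (pulled back to $A$) with $L_K$ via the identity $pI(B) = \iota(K)^{\perp\perp}$ from Lemma~\ref{lem: support projections}. The paper phrases this through the left kernel $N_\Psi$ and the preimage $q^{-1}(N_\Psi)$ rather than your element-by-element bookkeeping, but the content, including the unconditional containment $J \subseteq L_K$, is identical.
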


\begin{proof}
    Note that $L_K$ need not be an ideal in $A$. (Since $(A,B)$ is not necessarily a regular inclusion, Proposition~\ref{prop: JK ideal pseudo} does not apply.)
    However, $L_K$ is a closed left-ideal of $A$.
     
Let $p\in I(B)$ be the structure projection for $K$ (recall $p$ is
central).  Since $\Phi$ is the unique pseudo-expectation for $(A,B)$
and $K$ is a
regular ideal of $B$, Lemma~\ref{lem: PZ quotient} shows
$(A/J, B/K)$ has a unique pseudo-expectation $\Psi$
 given by
    $$ \Psi(a + J) = \Phi(a)p^\perp. $$

    Let $N_\Psi$ be the left kernel of $\Psi$.
    That is,
    $$ N_\Psi = \{ a + J \in A/J \colon \Psi((a+J)^*(a+J)) = 0 \}.$$
    Then  $N_\Psi$ is a closed-left ideal and 
    $\Psi$ is faithful if and only if $N_\Psi = \{0\}$.
    
    Denote by $q$ the quotient map $q \colon A \rightarrow A/J$.
    Then $N_\Psi = \{0\}$ if and only if $q^{-1}(N_\Psi) = J$.
   As $q(J)=0$,  $J \subseteq q^{-1}(N_\Psi).$
    Then 
    \begin{align*}
        q^{-1}(N_\Psi) &= \{ a \in A \colon q(a) \in N_\Psi\} \\
        &= \{ a \in A \colon \Psi((a+J)^*(a+J)) = 0 \}\\
        &= \{ a \in A \colon \Phi(a^*a)p^\perp = 0\}\\
        &= \{ a \in A \colon \Phi(a^*a) \in I(B)p \}.
    \end{align*}
    By Lemma~\ref{lem: support projections}, $pI(B) = \iota(K)^{\perp\perp}$, and hence $q^{-1}(N_\Psi) = L_K.$
    Hence $\Psi$ is faithful if and only if $J = L_K$.
\end{proof}

\begin{corollary}\label{cor: when quot has faithful}
    Let $(A,B)$ be a regular inclusion of $\ca$-algebras.
    Assume that $(A,B)$ has the faithful unique pseudo-expectation property.
    If $J \unlhd A$ is a regular ideal, then $(A/J, B/(J\cap B))$ has the faithful unique pseudo-expectation property.
\end{corollary}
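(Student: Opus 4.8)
The plan is to pass everything through the equivalence in Theorem~\ref{thm: when quot has faithful}. Set $K := J \cap B$. Because $(A,B)$ carries a faithful pseudo-expectation $\Phi$ (the faithful unique one), Proposition~\ref{prop: int reg pseudo} shows that $K$ is a regular ideal of $B$; let $p \in I(B)$ be its structure projection. Regularity of the inclusion guarantees that $(A/J, B/K)$ is again a regular inclusion, since the quotient map carries $N(A,B)$ into $N(A/J,B/K)$ and is surjective, so the asserted faithful unique pseudo-expectation property is meaningful for the quotient. With the hypotheses of Theorem~\ref{thm: when quot has faithful} in force, the entire corollary reduces to the single equality $J = L_K$, where $L_K = \{a \in A : \Phi(a^*a) \in \iota(K)^{\perp\perp}\}$.

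One inclusion is free: as already noted inside the proof of Theorem~\ref{thm: when quot has faithful}, one always has $J \subseteq L_K$ (concretely, the quotient pseudo-expectation $\Psi(a+J) = \Phi(a)p^\perp$ of Lemma~\ref{lem: PZ quotient} is well defined, so $\Phi(J)p^\perp = \{0\}$ and hence $\Phi(a^*a) \in pI(B) = \iota(K)^{\perp\perp}$ for $a \in J$). The substance is the reverse inclusion $L_K \subseteq J$, and here the genuine difficulty is that, without an invariance assumption on $\Phi$, Proposition~\ref{prop: JK ideal pseudo} is unavailable and $L_K$ is known only to be a closed \emph{left} ideal of $A$, not a two-sided one. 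I would therefore prove $L_K \subseteq J$ directly, exploiting faithfulness of $\Phi$, rather than trying to invoke Theorem~\ref{thm: 1-1 reg ideals pseudo} (which would first require establishing that the unique pseudo-expectation is invariant).

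The key computation runs as follows. Fix $a \in L_K$ and $c \in J^\perp$. Since $L_K$ is a left ideal, $ca \in L_K$, so $\Phi((ca)^*(ca)) \in \iota(K)^{\perp\perp}$; and since $J^\perp$ is an ideal, $ca \in J^\perp$. Now any $x \in J^\perp$ satisfies $xK = Kx = \{0\}$ (because $K \subseteq J$), so the bimodule property of Lemma~\ref{lem: bimod} shows that $\Phi(x^*x)$ annihilates $\iota(K)$ on both sides, i.e. $\Phi(x^*x) \in \iota(K)^\perp$. Applying this with $x = ca$ gives $\Phi((ca)^*(ca)) \in \iota(K)^{\perp\perp} \cap \iota(K)^\perp = \{0\}$, and faithfulness of $\Phi$ forces $ca = 0$. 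Hence $J^\perp L_K = \{0\}$, that is, every element of $L_K$ lies in the right annihilator $R := \{a \in A : J^\perp a = \{0\}\}$.

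It remains to identify $R$ with $J$. The set $R$ is a closed two-sided ideal of $A$: it is plainly a right ideal, and it is a left ideal because $J^\perp(ba) = (J^\perp b)a \subseteq J^\perp a = \{0\}$. A closed two-sided ideal of a C$^*$-algebra is self-adjoint, so $a \in R$ forces $a^* \in R$, whence $J^\perp a^* = \{0\}$ and therefore $aJ^\perp = (J^\perp a^*)^* = \{0\}$ as well; thus $a \in (J^\perp)^\perp = J^{\perp\perp} = J$, the last equality by regularity of $J$. This gives $L_K \subseteq R = J$, so $J = L_K$, and Theorem~\ref{thm: when quot has faithful} then delivers the faithful unique pseudo-expectation property for $(A/J,B/K)$. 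The one point to watch is exactly the one flagged above — the one-sidedness of $L_K$ — and it is resolved by the observation that the right annihilator of the (automatically self-adjoint) ideal $J^\perp$ coincides with its two-sided annihilator, which is $J$.
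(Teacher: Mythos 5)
Your proof is correct, and it takes a genuinely different route from the paper's. Both arguments reduce the corollary, via Theorem~\ref{thm: when quot has faithful}, to the single equality $J = L_K$ (with $K = J\cap B$ regular in $B$ by Proposition~\ref{prop: int reg pseudo}), but the paper obtains $J = L_K$ by invoking Theorem~\ref{thm: 1-1 reg ideals pseudo}, whose hypotheses include an \emph{$N$-invariant} faithful pseudo-expectation; invariance of the unique pseudo-expectation is derived in the paper only in the abelian setting (proof of Theorem~\ref{PseudoCarLatIso}, via \cite[Proposition~6.2]{Pit21} and unitization). You instead verify $J = L_K$ directly: the inclusion $J \subseteq L_K$ comes from Lemma~\ref{lem: PZ quotient} exactly as you say, and for $L_K \subseteq J$ you show $J^\perp L_K = \{0\}$ using only that $L_K$ is a closed left ideal, the module property of Lemma~\ref{lem: bimod} (in the one-sided form the paper itself uses in the proof of Proposition~\ref{prop: int reg pseudo}), and faithfulness of $\Phi$; you then identify the right annihilator of the self-adjoint ideal $J^\perp$ with $J^{\perp\perp} = J$, correctly using regularity of the ideal $J$ and the automatic self-adjointness of closed two-sided ideals. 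Your computation $\Phi((ca)^*(ca)) \in \iota(K)^{\perp\perp}\cap\iota(K)^\perp = \{0\}$ is the same annihilator game played in part (iii) of Proposition~\ref{prop: JK ideal pseudo}, but your version never requires $L_K$ to be two-sided, which is precisely the obstruction you flagged. What your approach buys: it sidesteps the invariance question entirely, and it nowhere uses regularity of the inclusion $(A,B)$, so your argument in fact proves the stronger statement that \emph{any} inclusion with the faithful unique pseudo-expectation property passes it to quotients by regular ideals --- subsuming Corollary~\ref{cor: abelian quotients} and its separate multiplicativity argument there. What the paper's approach buys: the equality $J = L_K$ arrives packaged inside the Boolean isomorphism $\reg(A) \simeq \reginv_A(B)$ with explicit inverse $K \mapsto L_K = K^{\perp\perp}$, which is the form needed for the later pseudo-Cartan applications (Theorems~\ref{thm: all latice isomorphisms} and~\ref{thm: pseudo cartan quotient}).
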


\begin{proof}
Note that $(A,B)$ has the \iip\ by Proposition~\ref{prop: iip and faithful pseudo}.
Let $J \unlhd A$ be a regular ideal.
By Theorem~\ref{thm: 1-1 reg ideals pseudo}, if $K = J \cap B$, then $J = L_K$.
Hence, by Theorem~\ref{thm: when quot has faithful}, $(A/J, B/(J\cap
B))$ has the faithful unique pseudo-expectation property.
\end{proof}

\begin{corollary}\label{cor: abelian quotients}
Let $(A,B)$ be an inclusion of abelian $\ca$-algebras.
Assume that $(A,B)$ has the faithful unique pseudo-expectation property.
If $J \unlhd A$ is a regular ideal, then $(A/J, B/(J\cap B))$ has the faithful unique pseudo-expectation property.
\end{corollary}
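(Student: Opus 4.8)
The plan is to reduce everything to Theorem~\ref{thm: when quot has faithful}: since $(A,B)$ has the faithful unique pseudo-expectation property and $\Phi$ is its pseudo-expectation, it suffices to show that $K := J\cap B$ is a regular ideal of $B$ and that $J = L_K$. First I would record the standing structure. Because $(A,B)$ has a faithful pseudo-expectation, Proposition~\ref{prop: iip and faithful pseudo} shows $(A,B)$ has the \iip; and since $A$, hence $B$, is abelian, $(A,B)$ satisfies \inv\ by Example~\ref{INVex}\eqref{INVex1}. Thus \cite[Theorem~3.5]{Exel23} applies: $K = J\cap B$ is a regular ideal of $B$, and $J = K^{\perp\perp}$ (annihilator computed in $A$). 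Applying the same theorem to the regular ideal $J^\perp$ gives $J^\perp\cap B = K^{\perp_B}$ and $J^\perp = (K^{\perp_B})^{\perp\perp}$. Let $p$ be the structure projection of $K$ in $I(B)$, so $\iota(K)^{\perp\perp} = pI(B)$ and $\iota(K)^\perp = p^\perp I(B)$ by Lemma~\ref{lem: support projections}; via Hamana's complement-preserving Boolean isomorphism between $\reg(B)$ and the central projections of $I(B)$ \cite{Ham81,Ham82}, the structure projection of $K^{\perp_B}$ is $p^\perp$.

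The inclusion $J\subseteq L_K$ I would obtain from the following observation, applied to both $J$ and $J^\perp$. Suppose $I\unlhd A$ is a regular ideal with $I = (I\cap B)^{\perp\perp}$, and write $K_I := I\cap B$ with structure projection $p_I$. For $a\in I$ and any $0\le k'\in K_I^{\perp_B}\subseteq K_I^{\perp}$ we have $ak' = 0$, hence $\iota(k')\Phi(a^*a)\iota(k') = \Phi(k'a^*ak') = 0$ by Lemma~\ref{lem: bimod}, so $\Phi(a^*a)^{1/2}\iota(k') = 0$. Since the structure projection $p_I^\perp$ of $K_I^{\perp_B}$ is the supremum of the $\iota(k')$ by \cite[Lemma~1.1]{Ham82}, normality of $x\mapsto \Phi(a^*a)^{1/2}\,x\,\Phi(a^*a)^{1/2}$ \cite[Proposition~2.1.10]{SaiWriBook} gives $\Phi(a^*a)^{1/2}p_I^\perp\Phi(a^*a)^{1/2} = 0$, whence $\Phi(a^*a)\in p_I I(B) = \iota(K_I)^{\perp\perp}$, i.e. $a\in L_{K_I}$. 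Taking $I = J$ yields $J\subseteq L_K$, and taking $I = J^\perp$ yields $J^\perp\subseteq L_{K^{\perp_B}}$, so that $\Phi(c^*c)\in \iota(K^{\perp_B})^{\perp\perp} = \iota(K)^\perp$ for every $c\in J^\perp$.

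For the reverse inclusion $L_K\subseteq J$ I would use commutativity of $A$ together with $J = J^{\perp\perp}$. Fix $a\in L_K$ and $c\in J^\perp = K^{\perp}$; it is enough to show $ac = 0$, for then $a\in J^{\perp\perp} = J$. Since $A$ is abelian, $c^*a^*ac = (a^*a)(c^*c)$, and the commuting inequalities $(a^*a)(c^*c)\le \|a\|^2 c^*c$ and $(a^*a)(c^*c)\le \|c\|^2 a^*a$ together with positivity of $\Phi$ give $\Phi(c^*a^*ac)\le \|a\|^2\Phi(c^*c)$ and $\Phi(c^*a^*ac)\le\|c\|^2\Phi(a^*a)$. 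As $\Phi(c^*c)\in\iota(K)^\perp$ and $\Phi(a^*a)\in\iota(K)^{\perp\perp}$, and these corners are hereditary ideals of $I(B)$, we obtain $\Phi(c^*a^*ac)\in \iota(K)^{\perp\perp}\cap\iota(K)^\perp = \{0\}$. Faithfulness of $\Phi$ then forces $ac = 0$. This proves $J = L_K$, and Theorem~\ref{thm: when quot has faithful} completes the argument.

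The main obstacle, and the step where abelianness is essential, is the reverse inclusion $L_K\subseteq J$: since $(A,B)$ need not be a regular inclusion, neither Proposition~\ref{prop: JK ideal pseudo} nor the \iip-sandwich used in Theorem~\ref{thm: 1-1 reg ideals pseudo} is available (in particular $L_K$ is not manifestly regular), so I cannot simply intersect with $J^\perp$ and invoke the \iip. Commutativity of $A$ is exactly what lets me rewrite $c^*a^*ac$ as $(a^*a)(c^*c)$ and trap it in $\iota(K)^{\perp\perp}\cap\iota(K)^\perp$. A secondary point to handle carefully is the identification of the structure projection of $K^{\perp_B}$ with $p^\perp$, which I would pin down using that $K\mapsto(\text{structure projection})$ is a complement-preserving Boolean isomorphism.
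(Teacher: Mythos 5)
Your proof is correct, and it takes a genuinely different route from the paper's. The paper's own proof invokes \cite[Proposition~2.4.4]{Pit25}: for an abelian inclusion with the faithful unique pseudo-expectation property, $\Phi$ is a $*$-monomorphism and $(I(B),\Phi)$ is an injective envelope for $A$; consequently $L_K=\Phi^{-1}\bigl(\iota(K)^{\dperp}\bigr)$, and two applications of \cite[Theorem~3.5]{Exel23} --- to $(A,B)$ and to $(I(B),\Phi(A))$ --- compose to give $\iota(K)^{\dperp}=\Phi(J)^{\dperp}$, whence $J=L_K$ essentially in one line. You never use multiplicativity of $\Phi$: you apply Exel's theorem only to $(A,B)$ (to $J$ and, via the fact that a lattice isomorphism of Boolean algebras preserves complements, to $J^\perp$), prove $J\subseteq L_K$ and $J^\perp\subseteq L_{K^{\perp_B}}$ by the structure-projection and normality computation (the same mechanism as the paper's Proposition~\ref{prop: int reg pseudo}, with the same citations to \cite[Lemma~1.1]{Ham82} and \cite[Proposition~2.1.10]{SaiWriBook}), and get $L_K\subseteq J$ by the commutation $c^*a^*ac=(a^*a)(c^*c)$, trapping $\Phi(c^*a^*ac)$ in $\iota(K)^{\dperp}\cap\iota(K)^\perp=\{0\}$ and invoking faithfulness. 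In effect you replay the paper's Proposition~\ref{prop: JK ideal pseudo}(iii) with commutativity of $A$ substituting for $N$-invariance, which is exactly the right fix since $(A,B)$ need not be regular here. What each approach buys: the paper's is shorter and conceptually transparent --- $J$ and $L_K$ are both preimages under the monomorphism $\Phi$ of the same regular ideal of $I(B)$ --- at the cost of importing the nontrivial structural theorem from \cite{Pit25}; yours is self-contained relative to the tools already deployed in this paper (bimodule property, Hamana's structure projections, normality in the monotone complete $I(B)$). Your auxiliary steps check out: $J^\perp\cap B=K^{\perp_B}$ holds since the Exel isomorphism preserves top, bottom, and hence complements, and your claim $ak'=0$ is justified because $K_I^{\perp_B}\subseteq K_I^{\perp}=I^\perp$ once $I=K_I^{\dperp}$ is known.
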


\begin{proof} Apply Lemma~\ref{abel+AUP}, then Corollary~\ref{cor:
    when quot has faithful}.
\end{proof}

If $(A,B)$ has the faithful unique pseudo-expectation property, then $(A,B)$ has the \iip, by Proposition~\ref{prop: iip and faithful pseudo}.
The following proposition shows that if $(A,B)$ is a regular inclusion then the \iip\ is also preserved under quotients by regular ideals.

\begin{proposition}\label{prop: quotient iip}
    Let $(A,B)$ be a regular inclusion of $\ca$-algebras and let $J \unlhd A$ be a regular ideal.
    If $(A,B)$ has the \iip, then $(A/J, B/(J\cap B))$ has the \iip.    
\end{proposition}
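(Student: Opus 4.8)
The plan is to reduce the assertion to a purely internal statement about the ideals of $A$ and then combine regularity of $J$ with the \iip\ of $(A,B)$. Write $K = J \cap B$ and let $q \colon A \to A/J$ denote the quotient map. The copy of $B/K$ inside $A/J$, given by $b + K \mapsto b + J$, is well defined and injective precisely because $B \cap J = K$. By the usual ideal-correspondence for C$^*$-algebras, every nonzero ideal of $A/J$ is of the form $I/J$ for a unique ideal $I \unlhd A$ with $J \subsetneq I$, and a short computation identifies the intersection of $I/J$ with the copy of $B/K$ as $(I \cap B)/K$. Thus the \iip\ for $(A/J, B/K)$ is equivalent to the implication
$$ J \subsetneq I \ \Longrightarrow\ J \cap B \subsetneq I \cap B \qquad (I \unlhd A). $$
Recording this reduction is the first step.

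The key point, which I would isolate as a lemma, is that \emph{if $I \unlhd A$ satisfies $J \subseteq I$ and $I \cap J^\perp = \{0\}$, then $I \subseteq J$.} To prove it, take $a \in I$ and $x \in J^\perp$. Since $aJ \subseteq J$ and $Ja \subseteq J$ while $xJ = Jx = \{0\}$, one checks that $ax$ and $xa$ both lie in $J^\perp$; as $I$ is an ideal they also lie in $I$, so $ax = xa = 0$. Hence $a \in J^{\perp\perp}$, and regularity of $J$ (that is, $J = J^{\perp\perp}$) gives $a \in J$. This is the only step that uses that $J$ is a regular ideal.

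To finish, suppose $J \subsetneq I$. The lemma (in contrapositive form) forces $I \cap J^\perp \neq \{0\}$. Since $I \cap J^\perp$ is a nonzero ideal of $A$, the \iip\ provides a nonzero $b \in I \cap J^\perp \cap B$. Now $J \cap J^\perp = \{0\}$ always holds---if $b \in J \cap J^\perp$ then $b^* \in J$ and $b$ annihilates $J$, so $bb^* = 0$ and $b = 0$---hence this $b$ lies in $(I \cap B) \setminus K$. This witnesses $J \cap B \subsetneq I \cap B$, which is exactly the implication above, so $(A/J, B/K)$ has the \iip.

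The only delicate verification---the step I expect to be the main obstacle---is checking that $ax$ and $xa$ return to $J^\perp$ for $a \in I$ and $x \in J^\perp$; everything else is the ideal-correspondence theorem and an application of the \iip. I note in passing that regularity of the \emph{inclusion} $(A,B)$ plays no role in this argument beyond the standing hypothesis that $B$ contains an approximate unit for $A$, which is what makes $(A/J, B/K)$ an inclusion in the first place.
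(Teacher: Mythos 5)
Your proof is correct and takes essentially the same route as the paper: your key lemma (if $J\subseteq I$ is regular and $I\cap J^{\perp}=\{0\}$ then $I\subseteq J$) is exactly the contrapositive of \cite[Lemma~3.4]{BFPR24}, which the paper cites, and both arguments then pull ideals of $A/J$ back to $A$ and apply the \iip\ to the nonzero ideal $I\cap J^{\perp}$. The only differences are cosmetic---you prove the lemma inline and argue directly rather than by contradiction---and your closing remark is accurate, since the paper's appeal to \cite[Theorem~3.5]{Exel23} is only for the containment $J^{\perp}\cap B\subseteq K^{\perp}$, which is immediate from $K\subseteq J$.
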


\begin{proof}
The proof follows the same reasoning as \cite[Theorem~4.2]{BFPR24}.
The proof is provided here for completeness and to highlight the role \cite{Exel23} plays in this setting.
Let $K = J \cap B$ and   suppose
 $I \unlhd A/J$ is an ideal satisfying $I \cap (B/K) = \{0\}.$
Let $q: A\rightarrow A/J$ be the quotient map and let $L = q^{-1}(I)$.
Note that $L \cap B = K$ and $J \unlhd L$.  We shall show $J=L$.
Suppose not.  Then
by \cite[Lemma~3.4]{BFPR24}, $J^\perp \cap L$ is a non-zero ideal.
By \cite[Theorem~3.5]{Exel23}, $J^\perp \cap B \subseteq K^\perp$.
Hence $(J^\perp \cap L) \cap B = \{0\}$.
By the \iip, $J^\perp \cap L = \{0\}$.  This contradiction
gives $J = L$.
It follows that $I = \{0\}$ and hence $(A/J,B/K)$ has the \iip.
\end{proof} 

We end this section with an application to pseudo-Cartan inclusions.

\begin{theorem}\label{thm: pseudo cartan quotient}
  Let $(A,D)$ be a pseudo-Cartan inclusion and let $J \unlhd A$ be a
  regular ideal. Then $(A/J, D/(J\cap D))$ is a pseudo-Cartan
  inclusion.

  Further, suppose $(A_1,D_1,\alpha)$ is a Cartan envelope for
  $(A,D)$, with conditional expectation
  $E \colon A_1 \rightarrow D_1$.  Let
$$ J_1 = \{a \in A_1 \colon E(a^*a) \in \alpha(J\cap D)^{\perp\perp_{D_1}}\}.$$
Then $J_1\in \reg(\A_1)$, $\alpha(J)\subseteq J_1$,
and if $\overline\alpha: A/J\rightarrow A_1/J_1$ is defined by
$\overline{\alpha}(a+J) = \alpha(a) + J_1$, then $\overline\alpha$ is
a $*$-monomorphism and $(A_1/J_1, D_1/(J_1\cap D_1), \overline{\alpha})$ is a Cartan envelope for $(A/J, D/(J\cap D))$.
\end{theorem}

\begin{proof}
Let $J \unlhd A$ be a regular ideal.
The inclusion $(A/J,D/(J\cap D))$ is regular since $(A,D)$ is a
regular inclusion.
Corollary~\ref{cor: when quot has faithful} shows $(A/J, D/(J\cap D))$
has the faithful unique pseudo-expectation property.  Hence
$(A/J, D/(J\cap D))$ is a pseudo-Cartan inclusion by Theorem~\ref{thm:
  pseudo-Cartan equiv}.

Let $(A_1, D_1, \alpha)$ be a Cartan envelope for $(A,D)$, and let $E \colon A_1 \rightarrow D_1$ be the faithful conditional expectation.
Let $(I(D_1),\iota)$ be an injective envelope for $D_1$.
By, \cite[Proposition~2.4.4]{Pit25}, $(I(D_1),\iota\circ \alpha)$ is an injective envelope for $D$.
Let
$$\Phi = \iota \circ E \circ \alpha \colon A \rightarrow I(D_1).$$
Then $\Phi$ is a pseudo-expectation for the inclusion $(A,D)$.
As $(A,D)$ is a pseudo-Cartan inclusion, $\Phi$ is the faithful unique pseudo-expectation for $(A,D)$.
 
Note that the map
$$ \Theta \colon L \mapsto \{a \in A_1 \colon E(a^*a) \in \alpha(L\cap D)^{\perp\perp_{D_1}}\}$$
is the isomorphism between $\reg(A)$ and $\reg(A_1)$ given by Theorem~\ref{thm: all latice isomorphisms}.
Observe also that $J_1 = \Theta(J)$.

Let $K = J \cap D$, and $K_1 = (\alpha(J\cap D))^{\perp\perp_{D_1}}$.
Let $p \in I(D_1)$ be the structure projection for $K_1$ (and hence $K$).
Take any $a \in A$.
Then, $\alpha(a) \in J_1$ if and only if $E(\alpha(a^*a)) \in K_1$.
Further $E(\alpha(a^*a)) \in K_1$ if and only if
$$\iota(E(\alpha(a^*a)))p = \iota(E(\alpha(a^*a))).$$
That is, $\alpha(a) \in J_1$ if and only if $\Phi(a^*a)p = \Phi(a^*a)$.
By Theorem~\ref{PseudoCarLatIso},
$$J = \{a\in A: \Phi(a^*a)\in\alpha(K)^{\dperp_{D_1}}\}.$$
Therefore, $\alpha(a) \in J_1$ if and only if $a \in J$.
It follows that the map
\begin{align*}
    \overline{\alpha}\colon  & A/J \longrightarrow A_1/J_1 \\
    & a + J  \longmapsto \alpha(a) + J_1
\end{align*}
is a well-defined $*$-monomorphism.

Let
$$ N = \{ n +J \colon n \in N(A,D)\}.$$
Then $N$ is a generating $*$-semigroup of normalizers for $A/J$.
As $\alpha$ is a regular morphism, $\alpha(n) + J$ is a normalizer of $D_1/K_1$ for each $n \in N(A,D)$.
Thus, by \cite[Proposition~6.1.6]{Pit25}, $\overline{\alpha}$ is a regular expansion.

The inclusion $(A_1/J_1, D_1/K_1)$ is Cartan by \cite[Theorem~4.8]{BFPR24}.
By \cite[Lemma~4.7]{BFPR24}, the conditional expectation $F \colon A_1/J_1 \rightarrow D_1/K_1$ is given by
$$ F(a + J_1) = E(a) + K_1.$$
Thus, if $a_1,\ldots,a_n \in A$, then
\begin{align*}
 &\alpha(a_1)E(\alpha(a_2))\ldots E(\alpha(a_n)) + J_1\\
= &(\alpha(a_1) + J_1) (E(\alpha(a_2))+J_1) \ldots (E(\alpha(a_n)) + J_1)\\
=& (\overline{\alpha}(a_1+J))(F(\overline{\alpha}(a_2+J)) \ldots (\overline{\alpha}(a_n+J)).
\end{align*}
As $A_1 = \cstar(\alpha(A) \cup E(\alpha(A)))$ it follows that
$$A_1/J_1 = \cstar(\overline{\alpha}(A/J) \cup F(\overline{\alpha}(A))).$$
Similarly
$$D_1/K_1 = \cstar(F(\overline{\alpha}(A/J))).$$
Thus $(A_1/J_1, D_1/K_1, \overline{\alpha})$ satisfies condition (1) and (2) of Definition~\ref{def: Cartan env}.

The inclusion $(D_1, \alpha(D))$ has the ideal intersection property since $(A_1,D_1,\alpha)$ is a Cartan envelope of $(A,D)$.
Therefore $(D_1/K_1, \alpha(D)/(J_1\cap \alpha(D))$ has the \iip\ by
Corollary~\ref{cor: abelian quotients} and
\cite[Proposition~2.2.4]{Pit25}.  Hence condition (3) of Definition~\ref{def: Cartan env} is satisfied.
Therefore $(A_1/J_1, D_1/K_1, \overline{\alpha})$ is a Cartan envelope for $(A/J, D/K)$.
\end{proof}

\section{Reduced Crossed Products and Virtual Cartan Inclusions}
\label{Sec:RCP}
\numberwithin{equation}{section}

The purpose of this section is to show that a large class of virtual
Cartan, but not Cartan, inclusions naturally arise in the context of
reduced crossed products.

Let $X$ be a compact Hausdorff space and consider the reduced crossed
product, $C(X)\rtimes_r\Gamma$ by a
discrete group $\Gamma$.  If the relative commutant of $C(X)$ in
$C(X)\rtimes_r\Gamma$ is abelian,~\cite[Theorem~6.14]{Pit17} shows
that $C(X)^c\subseteq C(X)\rtimes_r\Gamma$ is a virtual Cartan
inclusion.   In this section, we give a dynamical characterization of
when this inclusion is a Cartan inclusion under the assumption that
$X$ is
connected; this is accomplished in~Theorem~\ref{connect}.  Our
characterization produces a large class of virtual
Cartan inclusions which are not Cartan inclusions.

 \begin{remark} Under the assumption that $C(X)\rtimes_r\Gamma$ is
amenable, several of the results in this section can be obtained using
the groupoid literature.  In particular, in the amenable context,
Theorem~\ref{connect} and Proposition~\ref{noCEfam} below are consequences
of~\cite[Corollary~4.5]{BrownNagyReznikoffSimsWilliamsCaSuC*AlHaEtGr}
and~\cite[Proposition~4.1]{BrownNagyReznikoffSimsWilliamsCaSuC*AlHaEtGr}
respectively.   Since our results do not require an amenability
assumption and the theory of reduced crossed products is likely to be more
familiar to a broader audience, we have
chosen not to quote the groupoid literature here.
\end{remark}

The material in this section relies
upon~\cite[Section~6]{Pit17}, so we begin
with a summary of the
notation and results we require.

\medskip
\noindent\bf Notation
    and Background Results Used Throughout Section~\ref{Sec:RCP}:
    \rm
    \begin{notation}
\begin{enumerate}
 \item $X$ is a compact Hausdorff space.
 \item $\Gamma$ is a discrete group with unit $e$ acting via homeomorphisms on $X$.  For $s\in \Gamma$ and $x\in X$, $sx$ represents the image of $x$ under the homeomorphism corresponding to $s$.
 \item For $s\in \Gamma$ and $f\in C(X)$, $\tau_s(f)=f(s^{-1}x)$ is
   the corresponding automorphism of $C(X)$.  We will frequently write
   \[\D:=C(X),\] and will use $\D$ and $C(X)$ interchangeably.
   \item Let $C_c(\Gamma,\D)$ be the set of all functions
$a:\Gamma\rightarrow \D$ such that $\{s\in\Gamma: a(s)\neq 0\}$ is a
finite set.
Then $C_c(\Gamma,\D)$ is a $*$-algebra under the usual
convolution product and adjoint operation: for $a,b\in C_c(\Gamma,\D)$, 
\begin{equation}\label{convalg}
(ab)(t)=\sum_{r\in\Gamma} a(r)\tau_r(b(r^{-1}t))\dstext{and}
(a^*)(t)=\tau_t(a(t^{-1}))^*.
\end{equation}
Let \[\C=C(X)\rtimes_{r}\Gamma=\D\rtimes_r\Gamma\] be the reduced crossed product of
$C(X)$ by $\Gamma$, that is, the completion of $C_c(\Gamma,C(X))$ with
respect to the reduced \cstar-norm $\norm{\cdot}_r$.
\item The group $\Gamma$ is naturally embedded into $\C$ via $s\mapsto
\emb_s$, where $\emb_s$ is the element of $C_c(\Gamma,\D)$ given by
\[\emb_s(t)=\begin{cases} 0& \text{if $t\neq s$}\\ I&\text{if
    $t=s$.}\end{cases}\] 
Also, $C(X)$ is embedded into $C_c(\Gamma,\D)$ via the map $d\mapsto
d\emb_e$ and we  identify $C(X)$ with its image under this map.  

Notice that for $s\in \Gamma$ and $d\in \D$, 
$\emb_s d \emb_{s^{-1}}=\tau_s(d)$ and $\spn\{d\emb_s: d\in
\D, s\in\Gamma\}$ is norm dense in $\C$, so 
$\{\emb_s:s\in\Gamma\}\subseteq \N(\C,\D)$.  Thus $(\C,\D)$ is a
regular inclusion.

\item The map $\coexp:C_c(\Gamma,
\D)\rightarrow \D$ given by $\coexp(a)=a(e)$ extends to a faithful
conditional expectation $\coexp$ of $\C$ onto $\D$.  Likewise, the maps
$\coexp_s:C_c(\Gamma,\D)\rightarrow \D$ given by $\coexp_s(a)=a(s)$
extend to norm-one linear mappings $\coexp_s$ 
of $\C$ onto $\D$.  Notice that for
$a\in\C$ and $s\in\Gamma$,
\begin{equation}\label{Esdef}
  \coexp_s(a)=\coexp(a\emb_{s^{-1}}).
\end{equation}
If $a\in\C$ and $\coexp_s(a)=0$ for every $s\in \Gamma$, then $a=0$.  (See~\cite[Proposition~6.5]{Pit17} for details.)
\end{enumerate}
\end{notation}

 The following are from
  \cite[Definition~6.6]{Pit17}. \begin{definition}\label{variousdefs}
    \begin{enumerate} 
\item[(i)] For $s\in \Gamma$, let $\fix s=\{x\in X: sx=x\}$ be
  the set of fixed points of $s$.  
\item[(ii)] For $s\in\Gamma$, let
  $\fF_s=\{f\in\D: \supp(f)\subseteq  \intfix s\}$.
  Thus $\{\fF_s:s\in\Gamma\}$ is a family of closed  ideals in $\D$
  and $\fF_s\simeq C_0(\intfix s)$.  

\item[(iii)]
For $x\in X$, let  
$H^x:=\{s\in \Gamma: x \in \intfix s\}.$  Then $H^x$ is a normal
subgroup of $\Gamma$, see~\cite[Remark~6.7]{Pit17}.
(In~\cite{Pit17}, $H^x$ is called the \textit{germ isotropy group at $x$.}) 
\end{enumerate}
\renewcommand{\theenumi}{\alph{enumi}}
\end{definition}
Here is a description of the relative commutant of $\D$ in $\C$,
  denoted $\D^c$: 
  \begin{proposition}[{\cite[Proposition~6.8]{Pit17}}]\label{Dcdes}
      \begin{align*}
\D^c&=\{a\in\C: \tau_s(d)\coexp_s(a)=d\coexp_s(a) \text{ for all $d\in\D$ and all
 $s\in\Gamma$}\}\\
&=\{a\in\C: \coexp_s(a)\in\fF_s \text{ for all $s\in\Gamma$}\}.
                                                            \end{align*}
                                                          \end{proposition}

A useful faithful representation $\theta:
  \C\rightarrow \bh$, called the very discrete representation, is
  found in~\cite[Section 6]{Pit17}.   Using this representation,
 for each $x\in X$, one defines an isometry
 $V_x:\ell^2(H^x)\rightarrow \H$.  Here is a key fact about the isometries
 $V_x$ and the very discrete representation $\theta$.
\begin{proposition}[{\cite[Proposition~6.9]{Pit17}}]\label{evaluate} For $x\in X$ and $a\in \C$, define
$\Phi_x(a):=V_x^*\theta(a)V_x.$ Then $\Phi_x$ is a completely
positive unital mapping of $\C$ onto $\cstarred(H^x)$ and   
  $\Phi_x|_{\D^c}$ is a $*$-epimorphism of $\D^c$ onto $\cstarred(H^x)$.
\end{proposition}

\begin{flexstate}{Formula}{{\cite[Equation~6.6 on p.\ 403]{Pit17}}}\label{genD}
 Let $\lambda: \Gamma\rightarrow \ell^2(\Gamma)$ be the left regular
 representation.  (Following tradition, we write $\lambda_r$ instead of
 $\lambda(r)$.) For $d\in \D$ and $r\in \Gamma$, \begin{equation*}
\Phi_x(d\emb_r)=\begin{cases} 0&\text{if $r\notin H^x$,}\\
d(x)\lambda_r|_{\ell^2(H^x)}&\text{if $r\in H^x$.}
\end{cases}
\end{equation*}
\end{flexstate}

We will require the following results.

\begin{theorem}[{\cite[Theorem~6.11]{Pit17}}] \label{comdes}
  $\D^c$ is abelian if and only if $H^x$ is an abelian group for every
  $x\in X$.
\end{theorem}
\begin{theorem}[{\cite[Theorem~6.14]{Pit17}}]\label{arc->vc} If
  $\D^c$ is abelian, then the  inclusion $(\C,\D^c)$ is a virtual
    Cartan inclusion.
  \end{theorem}

We now turn to the goal of this section:  establishing Theorem~\ref{connect}.  The following result is inspired by \cite[Example~1.2]{Pit17}: it  gives concrete examples where there is no conditional expectation
of $\C$ onto $\D^c$.

\begin{proposition}\label{noCEfam}  Suppose $X$ is connected, $\D^c$
  is abelian, and there is $s\in\Gamma$
  such that \[\emptyset \neq (\fix s)^\circ
    \neq X.\]  
  Then there is no conditional expectation $\Delta:\C\rightarrow
  \D^c$.
\end{proposition}
\begin{proof}
Arguing by contradiction, suppose  $\Delta:\C\rightarrow \D^c$ is a
conditional expectation.
  Fix such an $s\in \Gamma$ satisfying the hypotheses.  Then 
  \[\{0\}\neq \fF_s
    \neq C(X).\]

Note  that for any $h\in C(X)$,
\[w_s^*\Delta(w_s)h=w_s^*\Delta(w_sh)=w_s^*\Delta(\tau_s(h)w_s)=w_s^*\tau_s(h)\Delta(w_s)=hw_s^*\Delta(w_s), \]
so $w_s^*\Delta(w_s)\in \D^c$. Likewise $\Delta(w_s)w_s^*\in \D^c$.  Therefore,
\begin{equation}\label{noCE1}
  w_s^*\Delta(w_s)=\Delta(w_s^*\Delta(w_s))=\Delta(w_s^*)\Delta(w_s)=\Delta(w_s)\Delta(w_s)^*=\Delta(w_s)w_s^*.
\end{equation}

Recalling the definition of $\bbE_s$ from~\eqref{Esdef}, put
\[f_s:=\bbE_s(\Delta(w_s))=\bbE(\Delta(w_s)w_s^*)=\bbE(w_s^*\Delta(w_s)).\]  Then
$f_s\in \fF_s$ and $f_sw_s\in \D^c$.
Since $f_s\in \fF_s$, $\tau_s(f_s)=f_s$.  Thus
$f_sw_s^*=w_s^*f_s\in \D^c$.  Then
\[f_sw_s^*\Delta(w_s)=\Delta(f_sw_s^*w_s)=f_s.\]
Hence
\[f_s=\bbE(f_sw_s^*\Delta(w_s))=f_s\bbE(w_s^*\Delta(w_s))=f_s^2.\]
 Connectivity of $X$
forces $f_s=0$ or $f_s=1$.    Since $f_s\in \fF_s\neq C(X)$, we get
$f_s=0$.
But then
\[0=f_s=\bbE(w_s^*\Delta(w_s))=\bbE(\Delta(w_s)^*\Delta(w_s)),\] whence
$\Delta(w_s)=0$ by faithfulness of $\bbE$.

Since $\fF_s\neq \{0\}$ we may choose $0\neq g\in\fF_s$.   Note that
$gw_s\in\D^c$ and 
$gw_s\neq 0$ because $(g
w_s)(gw_s)^*=gg^*\neq 0$.   Hence
\[0=g\Delta(w_s)=\Delta(gw_s)=gw_s\neq 0, \] an impossibility.
Therefore, there is no conditional expectation of $\C$ onto $\D^c$.
\end{proof}

The next lemma follows from a result of Choda.
\begin{lemma}[c.f. {\cite[Proposition~1]{ChodaCoBeSuSuDiC*CrPr}}]\label{Esubgp}  If $N$ is a subgroup of $\Gamma$, then
  $C(X)\rtimes_r N\subseteq C(X)\rtimes_r\Gamma$ and there is a
  conditional expectation
  $\Delta: C(X)\rtimes_r\Gamma\rightarrow C(X)\rtimes_r N$ which
  extends the map
  $C_c(\Gamma, C(X))\ni f\mapsto \begin{cases} 0& \text{ if } s\notin
    N\\ f(s) & \text{ if } s\in N.
  \end{cases}$
\end{lemma}

The following can be interpreted as providing examples of non-standard
Cartan incluions, along the lines
of~\cite{DuwenigGillaspyNortonReznikoffWrightCaSuNoPrTwGr}.

\begin{theorem}\label{connect}  Suppose $X$ is a compact, connected
  Hausdorff space, and the discrete group $\Gamma$ acts on $X$ by
  homeomorphisms.  Put $\C=C(X)\rtimes_r \Gamma$ and $\D=C(X)$.
  The
  following statements are equivalent.
  \begin{enumerate}
  \item\label{connect1} $(\C,\D^c)$ is a Cartan inclusion.
\item \label{connect0}  There is an abelian subgroup
  $N\subseteq \Gamma$ such that for  $s\in \Gamma$,
 \[(\fix s)^\circ =\begin{cases} X &\text{ if } s\in N\\ \emptyset
     &\text{ if } s\notin N.\end{cases}\]
\item \label{connect3}  There is an abelian subgroup $N$ of $\Gamma$ such that
  the action of $N$ on $X$ is trivial and $\D^c=C(X)\rtimes_r N\simeq
  C(X)\otimes \cstar(N)$.
\item\label{connect2}  $\D^c$ is abelian and there is a 
   conditional expectation $\Delta: \C\rightarrow \D^c$ (the
   conditional expectation is not assumed faithful).
 
\end{enumerate}
When this occurs, $N$ is a normal subgroup of $\Gamma$.
\end{theorem}
\begin{proof}  
 \eqref{connect1}$\implies$\eqref{connect0}   By hypothesis, $\D^c$ is abelian and  there exists a faithful conditional
 expectation $\Delta:\C\rightarrow \D^c$.
 Proposition~\ref{noCEfam} shows that for
  every $s\in\Gamma$,
\[(\fix s)^\circ\in \{\emptyset, X\}.\]

Let \[N:=\{s\in \Gamma: (\fix s)^\circ=X\}.\] Then $N$ is a subgroup
of $\Gamma$, and for each $x\in X$, $H^x=N$.  By
\cite[Remark~6.7]{Pit17}, $H^x$ is a normal subgroup of
$\Gamma$ for every $x$, hence $N$ is a normal subgroup.  Also since
$\D^c$ is abelian, $H^x$ is an abelian group for every $x$, whence $N$
is abelian.

\eqref{connect0}$\implies$\eqref{connect3}  Let $N$ be the subgroup
with the properties given in part~\eqref{connect0}.
Observe that for each $x\in X$, $H^x=N$.   Therefore $\D^c$ is
abelian by Theorem~\ref{comdes}.   Since \[\{d \omega_s: d\in \D \text{
  and } s\in N\}\subseteq \D^c,\] we obtain \[C(X)\rtimes_r N\subseteq
\D^c.\]   By hypothesis, the action of $N$ on $X$ is trivial.
Apply Lemma~\ref{Esubgp} to obtain a conditional expectation $\Delta:
\C\rightarrow C(X)\rtimes_r N$.

To show $\D^c=C(X)\rtimes_r N$, it suffices to show $\Delta$ maps onto
$\D^c$.  By assumption on $\Gamma$, for $x\in X$,
$\{s\in \Gamma: x\in \intfix s\}=N$, that is, for every $x\in X$,
\[H^x=N.\]
By the definition of $\Delta$ and Formula~\ref{genD}, for every $f\in
C(X)$ and $s\in \Gamma$,
\[\Phi_x(f\omega_s)=\Phi_x(\Delta((f\omega_s)).\]   Since
$\{f\omega_s: f\in C(X), \; s\in \Gamma\}$ has dense span in $\C$, we
conclude that for every $x\in X$,
\[\Phi_x=\Phi_x\circ\Delta.\]

 Let $a\in \D^c$.   By ~\cite[Corollary~6.10]{Pit17}, the map $\Phi:
 \D^c\rightarrow \bigoplus_{x\in X} \cstarred(H^x)$ given by $\Phi(h)(x)=\Phi_x(h)$
 is a $*$-monomorphism.   Since
 \[\Phi(\Delta(a))(x)=\Phi_x(\Delta(a))=\Phi_x(a)=\Phi(a)(x),\]  we obtain
 $\Delta(a)=a$.  Hence  $\Delta$ maps onto $\D^c$, so
 $\D^c=C(X)\rtimes_r N$.   That $C(X)\rtimes_r N\simeq
  C(X)\otimes \cstar(N)$ is because the action of $N$ on $X$ is trivial.

\eqref{connect3}$\implies$\eqref{connect2}
Since $N$ acts trivially on $X$, $\D^c=C(X)\rtimes_rN$ is abelian.
Applying Lemma~\ref{Esubgp}, we obtain a conditional expectation of
$\C$ onto $\D^c$.  This gives~\eqref{connect2}.

 \eqref{connect2}$\implies$\eqref{connect1}
 By~\cite[Theorem~6.15]{Pit17}, $(\C,\D^c)$ is a virtual Cartan
 inclusion.   Applying~\cite[Proposition~2.3.19]{Pit25} we see
 that $(\C,\D^c)$ is a Cartan inclusion.

 Finally, recall that during the proof of
 \eqref{connect1}$\implies$\eqref{connect0} we showed  $N$ is a normal
 subgroup. 
\end{proof}

\begin{flexstate}{Remark}{}\rm
  If $N=\{e\}$ in Theorem~\ref{connect}, we obtain an alternate proof
  (in the case that $X$ is connected) of the well-known
  characterization that $C(X)$ is a Cartan MASA in
  $C(X)\rtimes_r\Gamma$ if and only if the action of $\Gamma$ on $X$
  is topologically free.
    \end{flexstate}

    \begin{flexstate}{Remark}{}\rm
Theorem~\ref{connect} (or Proposition~\ref{noCEfam}) can be used to give  a wide variety of
  examples of virtual Cartan inclusions which are not Cartan
  inclusions.  For instance, when an abelian group $\Gamma$ acts on
  the connected and compact Hausdorff space $X$, $\D^c$ is necessarily abelian, and
  it is easy to find actions for which there exists $s\in \Gamma$ with
  $\emptyset\neq (\fix s)^\circ \neq X$ (i.e.
  \eqref{connect0} fails).  In this case, $(\C,\D^c)$ is virtual
  Cartan but not Cartan.
\end{flexstate}
\begin{flexstate}{Remark}{} \rm Suppose $\Gamma$ is a simple group, $X$ is connected
  and 
  the conditions of  Theorem~\ref{connect}
  hold.   Then either: $N=\{e\}$ or $N=\Gamma$.
  In the first case, the action of $\Gamma$
  on $X$ is topologically free so that $\D^c=\D$ and $(\C,\D)$ is a
  Cartan inclusion.  On the other hand, if 
  $N=\Gamma$, then $\Gamma$ is abelian so $\Gamma=\{e\}$;  thus
  $\C=\D$ in the second case.  (We thank Mikkel Munkholm for this observation.)
\end{flexstate}

\section{Examples}\label{Sec:Examples}

This section presents examples which illustrate the results of the
present paper as well as its predecessor~\cite{BFPR24}.

\numberwithin{equation}{subsection}

\subsection{Examples from Reduced Crossed Products}

 \begin{example}\label{nexttry} \rm
  We wish to illustrate that when the connectedness hypothesis found
   in Theorem~\ref{connect} fails, it is possible for $C(X)^c\subseteq
   C(X)\rtimes_r\Gamma$ to be a  Cartan subalgebra, yet
   there is $s\in \Gamma$ such that
   $\emptyset \neq (\fix s)^\circ\neq X$.  The
   Cartan inclusion exhibited here is the Cartan envelope of the
   virtual Cartan inclusion found in 
   Example~\ref{nexttryA}
   below.   As noted in Remark~\ref{nexttryA.0}, these examples
are related to the example found in \cite[Section 2]{PittsCoStReInII}.

 For  a self-adjoint
   unitary $I\neq U\in M_2(\bbC)$, let
   $P_\pm(U)=(I\pm U)/2$ be the spectral projections for $U$
   corresponding to the eigenvalues $\pm 1$.
 We shall fix the following two self-adjoint unitaries:  \[S=\begin{bmatrix}
     -1&0\\0 &1\end{bmatrix}\dstext{and}
   T=\begin{bmatrix}0&1\\1&0\end{bmatrix}.\]  They satisfy $ST=-TS$.

 Let $X$ be a compact Hausdorff space 
 and let \[\aug X:=  X\times \3.\]
 Regard
$C(\aug X)$ as the \cstaralg\
of all ordered triples $(f_1,f_2,f_3)$ of elements of $C(X)$, with
componentwise operations.
   
  Let $\xi:=(1 \phantom{ 1}  2)$ be the permutation of $\3$ which leaves $3$ fixed
  and exchanges $1$ and $2$.    Consider the homeomorphism $\tau$ of 
  $\aug X$  given by
  \[\tau(x,j)=(x,\xi(j)).\] 
  This is an action of the two element group $\Gamma=\{0,1\}$ on
  $\aug X$.  The corresponding automorphism of $C(\aug X)$ is
  $(f_1,f_2,f_3)\mapsto (f_2,f_1,f_3)$.

  We will often identify $C(X)\otimes M_2(\bbC)$ with $C(X,M_2(\bbC))$
  or $M_2(C(X))$
  without comment.    

  For $f\in C(X)$ and $M\in M_2(\bbC)$, we will use the notation $fM$
for the function in $C(X,M_2(\bbC))$ given by $x\mapsto f(x)M$. 

  Define $\pi: C(\aug X)\rightarrow C(X,M_2(\bbC))\oplus C(X,M_2(\bbC))$ by
  \[\pi(f_1,f_2,f_3)=(f_1P_-(S) + f_2 P_+(S))\oplus f_3 I_2 \text{ (in
      matrix form this equals
      $
      \left[\begin{smallmatrix}f_1&0\\0&f_2\end{smallmatrix}\right]\oplus
      \left[\begin{smallmatrix} f_3&0\\
          0&f_3\end{smallmatrix}\right]$)}.\] Writing $I_X$ and $I_2$
  for the identity elements of $C(X)$ and $M_2(\bbC)$ respectively, let \[\tilde
  T:=I_X T, \text{ (in matrix form this is $\left[\begin{smallmatrix}0&I_X\\
        I_X&0\end{smallmatrix}\right]$),}\]
and let $\A$ be the \cstaralg\ generated by
  $\pi(C(\aug X))$ and $\tilde T\oplus \tilde T$:
\[\A=\cstar(\pi(C(\aug X)) \cup \{\tilde T\oplus
  \tilde T\}).\]

Let 
\[ C(X,\cstar(T)):=\left\{gP_-(T)+hP_+(T): g, h\in
    C(X)\right\}=\left\{\frac{1}{2}\left[\begin{matrix}
        g+h&g-h\\g-h&g+h\end{matrix}\right]: g, h\in C(X)\right\}
    ;\] likewise we write  $C(X, \cstar(S))$ for
$\left \{\begin{bmatrix}g &0\\0&h\end{bmatrix} : g,h\in C(X)\right\}$.  
Thus,
\[\A=C(X,M_2(\bbC))\oplus C(X, \cstar(T)).\]

It is not difficult to show that
the map
\begin{equation}\label{nexttry1}
  C_c(\Gamma,
C(\aug X))\ni a\mapsto \pi(a(0))I_\A+\pi(a(1))(\tilde T\oplus \tilde T)\in
\A
\end{equation}
determines 
an isomorphism of $C(\aug X)\rtimes_r \Gamma$ onto $\A$. 
Using this isomorphism, we
identify $C(\aug X)\rtimes_r \Gamma$ with $\A$. 

By Proposition~\ref{Dcdes} (Definition~\ref{variousdefs} explains the
notation used in that result), the relative commutant of
$C(\aug X)$ in $\A$ is
   \begin{equation}\label{nexttry2}\B:=C(\aug X)^c=C(X, \cstar(S))\oplus
     C(X,\cstar(T))).
   \end{equation} 
   There is a faithful conditional expectation of $\A$ onto $\B$ given by
\begin{equation}\label{nexttry3}\Delta\left(\begin{bmatrix}f_{11}&f_{12}\\f_{21}&f_{22}\end{bmatrix}\oplus \begin{bmatrix}g&h\\h&g\end{bmatrix}\right)
= \begin{bmatrix}f_{11}&0\\0&f_{22}\end{bmatrix}\oplus \begin{bmatrix}g&h\\h&g\end{bmatrix}.
\end{equation}
We conclude that 
\[(\A,\B)=(C(\aug X)\rtimes_r\Gamma, C(\aug X)^c)\] is a Cartan inclusion.
\end{example}

\begin{example}\label{nexttryA}  \rm We continue with the notation and
  inclusions $(\A,\B)$ and $(\A, C(\aug X))$ found in Example~\ref{nexttry}, but make the additional
  assumption that $X$ is a connected set having more than one
element.  Fix
  $p\in X$.  Note that by connectedness, $p$ is an accumulation point
  of $X$.

Identify the points in the set $\{p\}\times \3\subseteq \aug X$ to obtain the
quotient space $Y:=\aug X/\sim$.     Then $Y$ is a compact, connected
Hausdorff space.   We will use $\phi$ for the quotient
mapping of $\aug X$ onto $Y$.     We write $\dot p$ for the common
image of $(p, i)$ in $Y$ ($i=1,2,3)$.

This gives an embedding $\alpha_0:C(Y)\rightarrow C(\aug X)$ given by
\begin{equation}\label{nexttryA0}\alpha_0(f)= f\circ \phi.
\end{equation}
Note that 
\[\alpha_0(C(Y))=\{f\in C(\aug X): f(p,1)=f(p,2)=f(p,3)\}.\]

The action of $\Gamma$ on $\aug X$ leaves $\{p\}\times\3$ invariant,
hence determines an action of $\Gamma$ on $Y$.  This action satisfies:
\[s\dot p=\dot p\dstext{and}  s\phi(x)=\phi(sx), \quad s\in \Gamma, \; x\in \aug X.\]
Let
\[\C:=C(Y)\rtimes_r \Gamma \dstext{and let} \D:=C(Y)\subseteq \C\] 

If
$s$ is the non-trivial homeomorphism of $Y$ arising from $\Gamma$,
then 
\[\emptyset\neq (\phi(X\times\{3\}))^\circ=(\fix s)^\circ\neq Y.\]  By
Theorem~\ref{comdes} and Theorem~\ref{arc->vc},
  $(\C, \D^c)$ is a virtual Cartan inclusion, and there is no conditional expectation of $\C$
  onto $\D^c$.   Therefore, $(\C,\D^c)$ is a virtual Cartan inclusion
but not a Cartan inclusion.

Extend the map $\alpha_0$ given in~\eqref{nexttryA0} to a map,  $\alpha:\C\rightarrow\A= C(\aug X)\rtimes_r \Gamma$ by
\[\alpha(a)= \pi(\alpha_0(a(0))) I_\A + \pi(\alpha_0(a(1)))(\tilde T\oplus
  \tilde T) \quad a\in C_c(\Gamma, C(Y)).\]

The image of $\alpha$ is
\begin{align}\label{nexttryA2}
&\left
  \{\begin{bmatrix}f_{11}&f_{12}\\f_{21}&f_{22}\end{bmatrix}\oplus \begin{bmatrix}g&h\\h&g\end{bmatrix}\in\A:
  \text{ all matrix entries belong to } \alpha_0(\D)
                                                                                          \right\}\\
  &\simeq \{f\in C(Y, M_2(\bbC))): f(\phi(x,3))\in \cstar(T) \text{ for all
    }x\in X\}.\label{nexttryA3}
\end{align}

We aim to show $(\A, \B, \alpha)$ is a Cartan
envelope for $(\C,\D^c)$, 

We have,
\[\alpha(\D^c)=\{f\in \B: f(p,1)=f(p,2)=f(p,3)\},\] from which it
follows 
that $\alpha(\D^c)\subseteq \B$ has the ideal intersection
property.   Since $(\C, \D^c)$ is a virtual Cartan
inclusion and $\alpha|_{\D^c}$ is one-to-one, we find $\alpha$ is a
$*$-monomorphism.

Note that~\eqref{nexttry3} gives
\[\Delta(\tilde T\oplus \tilde T)=0\oplus \tilde T,\] and routine
arguments (using
$\tilde T^2=\left[\begin{smallmatrix} I_X& 0\\
    0&I_X\end{smallmatrix}\right]$) show
that
\[\B=\cstar(\Delta(\alpha(\C)))\dstext{and}\A= \cstar(\B\cup
  \alpha(\C)).\] It remains to show $\alpha$ is a regular
homomorphism.  For this, we describe $N(\C,\D^c)$.

\bf Normalizers:  \rm   Suppose $a\in N(\C,\D^c)$, where we view $\C$
and $\D^c$ as subalgebras of $C(Y,M_2(\bbC))$.  Then for every $y\in
Y$,  $a(y)$ must
normalize the fiber for $\D^c$ over $y\in Y$.  Since $\cstar(S)\cap
\cstar(T)=\bbC I$, and $N(M_2(\bbC),\bbC)$
consists of scalar multiples of unitaries in $M_2(\bbC)$, we obtain
for $n\in\3$ and $x\in X$,
\[a(\phi(x,n) )\in \begin{cases} N(M_2(\bbC), \cstar(S))& n\in \{1,2\}\\
    \bbC \U(M_2(\bbC)) & \phi(p,n)=\dot p\\ \cstar(T) & n=3.
  \end{cases}
  \]  By continuity of $a$, for $n\in \{1,2\}$,
  \[\lim_{\phi(x.n)\rightarrow \dot p}a((\phi(x,n))=a(\dot p),\] so  $a(\dot p)\in
  \bbC\U(\cstar(S))$.  Similarly, $\lim_{\phi(x,3)\rightarrow \dot p}
  a(\phi(x,3))=a(\dot p)\in \bbC \U(\cstar(T))$.  This forces  $a(\dot
  p)\in \bbC I$.   Thus,
  \begin{equation}\label{nexttryA1} \begin{split}N(\C, \D^c)&=
\left\{a\in \C: a(\phi(x,n))\in N(M_2(\bbC), \cstar(S)) \text{ for }
        n\in \{1,2\},\right. \\ &\qquad\left.  a(\dot p)\in \bbC I, \text{ and } a(\phi(x,3))\in \cstar(T)
     \right\}.\end{split}
    \end{equation}  

It is now easy to see that
$\alpha(N(\C,\D^c)\subseteq N(\A,\B)$, so $\alpha$ is a regular
map.   Therefore $(\A,\B,\alpha)$ is a Cartan envelope for $(\C,\D^c))$.
    
   \textbf{The Pseudo-Expectation.}  Since $\alpha(\D^c)\subseteq \B$
   has the ideal intersection property, it follows that this inclusion
   has the faithful unique pseudo-expectation property.  Let
   $(I(\D^c), \iota)$ be an injective envelope for $\D^c$ and let
   $\iota_1:\B\rightarrow I(\D^c)$ be the pseudo-expectation for
   $(\B, \alpha(\D^c))$.  Then $\iota_1$ is a $*$-monomorphism and
   (using the fact that $(\C,\D^c)$ has the faithful unique
   pseudo-expectation property) we see the pseudo-expectation for $(\C,\D^c)$ is
\begin{equation}\label{nexttryA4}\Phi:=\iota_1\circ\Delta\circ\alpha.
\end{equation}
By~\cite[Proposition~6.2]{Pit21}, $\Phi$ is an invariant pseudo-expectation.

   \textbf{Regular Ideals.}
Using \eqref{nexttryA4}, we see that for $K\idealin \D^c$,
the definitions of $J_K$ and $L_K$
found in \cite[Notation~3.9]{BFPR24}
become
\[J_K=\{c\in \C: \Delta(\alpha(c^*c))\in \alpha(K)\}\dstext{and} L_K=\{c\in
  \C: \Delta(\alpha(c^*c))\in \alpha(K)^{\dperp_\B}\}.\]

Let $3^-$ and $3^+$ be symbols intended to represent a ``splitting''
of the
element $3\in \{1,2,3\}$ into two separate elements; we shall consider
the four element set $\{1,2,3^-,3^+\}$.

The maximal ideal spaces of $\B$ and $\D^c$ are
$X\times \{1,2,3^-,3^+\}$ and $(X\times \{1,2,3^-,3^+\})/\sim$ respectively, where
$\sim$ now means that 
the points $(p,1), (p,2), (p,3^-), (p,3^+)$ in $\hat\B$ have been identified.
Let \[\phi': X\times \{1,2,3^-,3^+\}\rightarrow (X\times \{1,2,3^-,3^+\})/\sim\]
be the quotient map and let $\dot p'= \phi'(p,k)$,
$k\in \{1,2,3^-,3^+\}$.

Let $K\idealin \D^c$ be a regular ideal and let $\Omega=\supp(K)$, that is,
the set of all $(x,k)\in X\times \{1,2,3^-,3^+\}$ for which there is $a\in K$
with $a(\phi'(x,k)))\neq 0$.   The support of $\alpha(K)^{\dperp_\B}$ is
$\left(\overline{\phi'{}^{-1}(\Omega)}\right)^\circ$, so that
\[\alpha(K)^\dperp=\left\{h\in C(X\times \{1,2,3^-,3^+\}): \supp (h) \subseteq
    \left(\overline{\phi'{}^{-1}(\Omega)}\right)^\circ\right\}.\]
Notice that  $K$ is an invariant regular ideal if and only if the
following condition holds:  for each $x\in X$, $\{\phi'(x,1),
\phi'(x,2)\}\subseteq \supp(K)$ or $\{\phi'(x,1),
\phi'(x,2)\}\cap \supp(K)=\emptyset$. 
By 
Theorem~\ref{PseudoCarLatIso}, the Boolean algebra of regular ideals
of $\C$ is isomorphic to the Boolean algebra of 
regular invariant ideals of $\D^c$.
\end{example}

\begin{remark} It is worth noting that the relationship between the
  Cartan inclusion $(\A,\B)$ of Example~\ref{nexttry} and the virtual
  Cartan inclusion $(\C,\D)$ of Example~\ref{nexttry2} is a particular
  case of the far more general setting found in
  \cite[Proposition~5.31]{Pit21}.   That proposition gives a method
  for 
  constructing a virtual Cartan inclusion $(\C,\D)$ from  a Cartan
  inclusion $(\A,\B)$ and a subalgebra $\D\subseteq \B$ having the
  \iip; in some cases $(\A,\B)$ is the Cartan envelope for $(\C,\D)$.
\end{remark}

\subsection{Rational Rotation Algebras}
\begin{example}\label{rotalg} \rm
  In this example, we take $X=\bbT$, $\Gamma=\bbZ$, and let
  $\lambda=\exp(2\pi i p/q)$, where $\gcd(p,q)=1$ and
  $1\leq p\leq q-1$.  The action of $\bbZ$ on $\bbT$ and the
  corresponding action of $\bbZ$ on $C(\bbT)$ are given by
\[nx:=\lambda^n x \dstext{and} \alpha_n(f)(x)=f(\overline\lambda^nx),
  \quad  n\in \bbZ, x\in \bbT.\]    Put \[\C:=C(\bbT)\rtimes_r \bbZ,
  \quad\D:=C(\bbT)\subseteq \C.\]    Let $V\in C(\bbT)$ be
the function $V(x)=x$ and denote by $U=w_1$, (so the canonical copy
of $\bbZ$  in $\C$ is $\{U^n: n\in \bbZ\}$).    Since
$UVU^*=\alpha_1(V)=\overline\lambda V$, we have
\[VU=\lambda UV.\]  Noting that 
\[\fix(k)=\begin{cases} \bbT & k\in q\bbZ\\ \emptyset & k\notin
    q\bbZ,\end{cases}\] Theorem~\ref{connect} shows 
$(\C,\D^c)$ is a Cartan inclusion.   Our aim is to determine the
regular ideals in $\C$.

First, it follows from Proposition~\ref{Dcdes} that $\D^c$ is the
\cstaralg\ generated by $\{V, U^q\}$.  Thinking of $C_c(q\bbZ,
C(\bbT))\subseteq C_c(\bbZ, C(\bbZ))$, we find $C(\bbT)\rtimes_r(q\bbZ)$ embeds
isometrically into $C(\bbT)\rtimes_r \bbZ$.
The action of $q\bbZ$ is trivial on $C(\bbT)$, so $C(\bbT)\rtimes_r
(q\bbZ)$ is isomorphic to $\cstar(V)\otimes \cstar(U^q)$.    We conclude 
\[\D^c=C(\bbT)\rtimes_r q\bbZ \text{ and } \D^c\simeq  C(\bbT^2).\]

Let $N$ be the group generated by $\{V, U\}$.   Then
$N\subseteq \N(\C, \D^c)$ is a
generating 
$*$-semigroup of normalizers.   For $j, k\in \bbZ$ and any finite sum
of the form $a=\sum_{s\in q\bbZ} f_s U^s\in \D^c$, we have
\[V^jU^ka U^{*k} V^{*j}=\sum_{s\in q\bbZ} \alpha_k(f_s)U^s;\] here we
have used the fact that $U^s$ commutes with $V$ for every $s\in
q\bbZ$.   Thus, viewing $\widehat{\D^c}=\bbT^2$, the
action of $N$ on $\widehat{\D^c}$  is given by
$\beta_{V^jU^k}(x,y)=(\lambda^j x, y)$,  that is,   the action
determined by $N$ is  ``rotation in the first variable'' and
the identity in the second.

By \cite[Corollary~3.20]{BFPR24}, the
collections of $N$-invariant regular ideals in $\D^c$ and invariant
regular ideals in $\D^c$ coincide.  Also, the regular invariant ideals
in $\D^c$ correspond to regular open sets $H\subseteq \bbT^2$
invariant under rotation in the first variable.

Finally,  \cite[Proposition~6.3]{BFPR24}
 gives a description of the regular ideals in the rational rotation
algebra $\C$ in terms of the regular invariant ideals in $\D^c$.

It is possible for two non-isomorphic \cstaralg s to
 have isomorphic lattices of regular ideals,
               see~\cite[Proposition~5.6]{PittsIrMaIsBoReOpSeReId} for
               examples of pairs of commutative \cstaralg s with this behavior.  We conclude this example
 by observing that such behavior occurs in the present context of
 rational rotation algebras. Suppose $q$ is fixed and for $k=1,2$,
 $1\leq p_k\leq q-1$ satisfy $\gcd(p_k, q)=1$.  Put
 $\lambda_k:= \exp(2\pi ip_k/q)$.  Consider the two actions of $\bbZ$
 on $\bbT$ corresponding to rotation by $\lambda_k$ and let
 $\C_1:=C(\bbT)\rtimes_{r,\lambda_1}\bbZ$ and
 $\C_2:=C(\bbT)\rtimes_{r,\lambda_2}\bbZ$ be the associated crossed
 products.  This gives two inclusions $(\C_k, \D_k)$ where
 $\D_k=C(\bbT)\subseteq \C_k$ and
 $\D_k^c\simeq C(\bbT)\otimes C(\bbT)$.  The lattices of regular
 invariant ideals for $\D^c_k$ are isomorphic because they arise from
 regular open sets in $\D^c$ invariant under rotation by
 $\exp(2\pi i/q)$ in the first variable. Thus the lattices of regular
 ideals in $\C_1$ and $\C_2$ are isomorphic.  However, if
 $p_1/q\notin\{p_2/q, (q-p_2)/q\}$, then $\C_1$ and $\C_2$
 are not isomorphic (see~\cite{YinHongShengSiPrClRaRoC*Al} for a short
 proof of this fact).
 \end{example}

 \subsection{An Example 
   Arising from Higher-Rank Graphs} \label{hrg}
 We use groupoids to describe \cstaralg s of higher rank graphs.
 A groupoid $G$ is a small category in which every element is
invertible.  We identify the identity morphisms with the objects and
refer to them collectively as the unit space $G^{(0)}$.  A topological
groupoid is a groupoid endowed with a locally compact Hausdorff
topology in which composition and inversion are continuous.  Our
interest is with \'etale groupoids (see \cite{simsnotes} for a brief
introduction to \'etale groupoids and their \ca-algebras).  If we let
$r_G(\gamma)$ be the range of an element $\gamma\in G$ and
$s_G(\gamma)$ be its source, the isotropy subgroupoid is defined to be
the set
\[\Iso(G)=\{\gamma\in G: r_G(\gamma)=s_G(\gamma)\}.\]
Theorem~3.1(b) of \cite{BrownNagyReznikoffSimsWilliamsCaSuC*AlHaEtGr}
implies that $\cstarred(\Iso(G)^\circ)$ has the ideal intersection
property in $\cstarred(G)$. Furthermore,
\cite[Theorem~4.3]{BrownNagyReznikoffSimsWilliamsCaSuC*AlHaEtGr} gives
conditions that guarantee that $\cstarred(\Iso(G)^\circ)$ is a MASA in
$\cstarred(G)$.  These conditions are always satisfied if $G$ is
constructed from a higher-rank graph (see \cite{KumPas00} for the
construction). Thus for groupoids constructed from higher-rank graphs,
the inclusion $(\cstarred(G),\cstarred(\Iso(G)^\circ))$ is always virtual
Cartan and hence pseudo-Cartan.   This inclusion is the same inclusion
as found in Example~\ref{ex: k-graph}. 

Example~4.7 from \cite{BrownNagyReznikoffSimsWilliamsCaSuC*AlHaEtGr}
illustrates (in the language of this paper) that pseudo-Cartan
inclusions that are not Cartan arise naturally  in the context of
higher-rank graph algebras.  We will further analyze the
pseudo-Cartan inclusion coming from this example, computing its
Cartan envelope and using the Cartan envelope, along with
Theorem~\ref{thm: all latice isomorphisms}, to describe the regular
ideals of the resulting higher-rank graph algebra.  The method used
here, of ``splitting'' the graph into two pieces, was inspired by Example~\ref{nexttry3}.

We first present a general result.

\begin{lemma}[{\cite[Proposition~3.3]{MuhlyRenaultWilliamsCoTrGrC*AlIII}}]\label{dual bundle}
Suppose $\mathcal{A}\stackrel{p}{\to} X$ is an \'etale bundle of abelian groups. 
Then $\caop(\mathcal{A})\cong C_0(\hat{\mathcal{A}})$, where $\hat{\mathcal{A}}=\{(\xi,x): x\in X, \, \xi \in \widehat{\mathcal{A}_x}\}$ and the topology on $ \hat{\mathcal{A}}$ is characterized by $(\xi_i,x_i)\to (\xi, x)$ if and only if \begin{itemize} \item $x_i\to x$ and \item for all nets $a_i\to a\in \mathcal{A}$ with $a_i\in p\inverse(x_i)$, we have $\xi_i(a_i)\to \xi(a)$. \end{itemize}
\end{lemma}
\begin{remark} Note that
 \cite[Proposition~9.81]{WilliamsToKiGrC*Al} shows that the bundle
 $\mathcal{A}$ from Lemma~\ref{dual bundle} is amenable, so
 $\caop(\mathcal{A})=\cstarred(\mathcal{A})$.  
\end{remark}

\begin{example}\label{2grExPre} Consider the 2-colored graph in Figure~\ref{2grEx}.

\begin{figure}[h!]
\begin{tikzpicture}[scale=1.5, >=stealth, decoration={markings, mark=at position 0.5 with {\arrow{>}}}]
  \node[circle, inner sep=1.5pt, fill=black, label={u}] (v) at (0,0) {};
  \node[circle, inner sep=1.5pt, fill=black, label={v}] (u) at (2,1) {};
  \node[circle, inner sep=1.5pt, fill=black, label={w}] (w) at (2,-1) {};
  \draw[blue, postaction=decorate] (v) .. controls +(0.75,0.75) and +(-0.75,0.75) .. (v) node[above, pos=0.5, black] {\small$e_b$};
  \draw[blue, postaction=decorate] (u) .. controls +(0.75,0.75) and +(-0.75,0.75) .. (u) node[above, pos=0.5, black] {\small$f_b$};
  \draw[blue, postaction=decorate] (w) .. controls +(0.7,0.7) and +(-0.65,0.65) .. (w) node[circle, inner sep=0.1pt, above, pos=0.5, black] {\small$g_b$};
  \draw[blue, postaction=decorate] (w) .. controls +(1,1) and +(-1,1) .. (w) node[circle, inner sep=0.1pt, above, pos=0.5, black] {\small$h_b$};
  \draw[red, dashed, postaction=decorate] (v) .. controls +(0.75,-0.75) and +(-0.75,-0.75) .. (v) node[below, pos=0.5, black] {\small$e_r$};
  \draw[red, dashed, postaction=decorate] (u) .. controls +(0.75,-0.75) and +(-0.75,-0.75) .. (u) node[below, pos=0.5, black] {\small$f_r$};
  \draw[red, dashed, postaction=decorate] (w) .. controls +(0.65,-0.65) and +(-0.65,-0.65) .. (w) node[circle, inner sep=0.1pt, below, pos=0.5, black] {\small$g_r$};
  \draw[red, dashed, postaction=decorate] (w) .. controls +(1,-1) and +(-1,-1) .. (w) node[circle, inner sep=0.1pt, below, pos=0.5, black] {\small$h_r$};
  \draw[blue, postaction=decorate, out=200, in=40] (u) to node[above, pos=0.5, black] {$\alpha_b$} (v);
  \draw[blue, postaction=decorate, out=140, in=340] (w) to node[above, pos=0.5, black] {$\beta_b$} (v);
  \draw[red, dashed, postaction=decorate, out=220, in=20] (u) to node[below, pos=0.5, black] {$\alpha_r$} (v);
  \draw[red, dashed, postaction=decorate, out=160, in=320] (w) to node[below, pos=0.5, black] {$\beta_r$} (v);
\end{tikzpicture}
\caption[b]{Skeleton of the 2-graph \cite[Example~4.7]{BrownNagyReznikoffSimsWilliamsCaSuC*AlHaEtGr}.} \label{2grEx}
\end{figure}

Define factorization rules by
\begin{gather}\label{factrule}
e_be_r=e_re_b, \ e_b \alpha_r = e_r \alpha_b,\ e_b\beta_r = e_r\beta_b,\
  \alpha_bf_r = \alpha_r f_b,\ f_b f_r = f_r f_b,\
  \beta_b g_r = \beta_r g_b,\\ \label{2grExa}
  \beta_b h_r = \beta_r h_b, \ g_bg_r = g_r g_b,\ g_b h_r = h_rg_b,\ h_bg_r = g_rh_b,\text{ and }
  h_b h_r = h_r h_b.\notag
\end{gather}

 \begin{dremark}{Notation}\label{lamdef}
For the remainder of this section, let $\Lambda$ be the  $2$-graph defined by the 2-colored graph in
Figure~\ref{2grEx} and the factorization rules~\eqref{factrule}, with    degree
map $d_\Lambda: \Lambda \to \bbZ^2$     satisfying $\{\alpha_b,
\beta_b, e_b, f_b, g_b, h_b\}\subseteq d^{-1}\{(1,0)\}$ and
$\{\alpha_r, \beta_r, e_r, f_r, g_r, h_r\}\subseteq d^{-1}\{(0,1)\}$ (see \cite{HRSW} for the
relationship between $k$-colored graphs  with factorization rules
 and $k$-graphs).    \end{dremark}

  We will consistently identify paths in $\Lambda$
with sequences of edges under the equivalence relations defined by the
factorization rules above; see Remarks~2.2 in \cite{KumPas00} and
Proposition~5.7 in \cite{HRSW}.

The associated path groupoid, $G_\Lambda$, is defined as follows,
where $\Lambda^\infty$ denotes the infinite path space; details can be
found in Section 2 of \cite{KumPas00}.
\begin{equation*} \begin{split} G_\Lambda&=\{(x,l,y)\in
    \Lambda^\infty\times \bbZ^2\times \Lambda^\infty:\exists \mu,\nu
    \in \Lambda, \, z\in \Lambda^\infty \, \text{ such that }  x=\mu z, \, y=\nu z, \, \text{and}\\
    & \phantom{=\{(x,l,y)\in \Lambda^\infty\times \bbZ^2\times
      \Lambda^\infty: \exists} d_\Lambda(\mu)-d_\Lambda(\nu)=l\}
    \\
    & = \{(x,l,y) \in \Lambda^\infty\times \bbZ^2 \times
    \Lambda^\infty : \exists p, q \in \mathbb{Z}^2 \, \, \text{ such
      that } \sigma^p(x)=\sigma^q(x) \text{ and }
    p-q=l\}, \end{split}
\end{equation*} where the shift map
$\sigma: \Lambda^\infty \rightarrow \Lambda^\infty$ is defined by
$\sigma^p(x)(m,n)=x(m+p_1, n+p_2)$ for
$p=(p_1, p_2) \in \mathbb{Z}^2$.

The units of $G_\Lambda$ are of the form $(x,0,x)$, which we often identify with $\Lambda^\infty$. The topology of $G_\Lambda$ is induced from a basis of cylinder sets
\[
Z_{G_\Lambda}(\mu,\nu)=\{(\mu x, \, d_\Lambda(\mu)-d_\Lambda(\nu), \, \nu x): x\in s(\mu)\Lambda^\infty\}
\]
defined by pairs $\mu,\nu\in \Lambda$ with $s(\mu)=s(\nu)$. In this topology, each $Z_{G_\Lambda}(\mu,\nu)$ is compact \cite[Proposition~2.8]{KumPas00}.

\begin{remark}
 By Theorem 5.5 of \cite{KumPas00}, $G_\Lambda$ is amenable so that $\caop(G_\Lambda) = \cstarred(G_\Lambda)$. We continue to write $\cstarred(G_\Lambda)$ throughout since most of the groupoid literature we cite concerns the reduced \ca-algebra.
\end{remark}

By \cite[Example~4.7]{BrownNagyReznikoffSimsWilliamsCaSuC*AlHaEtGr},
$\Iso(G_\Lambda)^\circ$ is not closed in $G_\Lambda$. While
$(\cstarred(G_\Lambda),\cstarred(\Iso(G_\Lambda)^\circ))$ is a regular
MASA inclusion
(\cite[Theorem~4.3]{BrownNagyReznikoffSimsWilliamsCaSuC*AlHaEtGr}),
$\cstarred(\Iso(G_\Lambda)^\circ)$ is not the image of a conditional
expectation
(\cite[Proposition~4.1]{BrownNagyReznikoffSimsWilliamsCaSuC*AlHaEtGr}).
We shall identify the Cartan envelope of
$(\cstarred(G_\Lambda),\cstarred(\Iso(G_\Lambda)^\circ))$ and use this
to describe the regular ideals of $\cstarred(G_\Lambda)$.

We start by establishing some notation and making some observations
about the infinite path space of  $\Lambda$.  We denote
\[ e^\infty =e_re_be_re_b\cdots \quad \text{and} \quad f^\infty = f_r f_b f_r f_b\cdots \] and partition the infinite path space $\Lambda^\infty$ into three disjoint sets,
\[ X_v=\Lambda v \Lambda^\infty=\Lambda f^\infty, \quad X_w=\Lambda w
  \Lambda^\infty, \quad \text{and} \quad X_u=\{e^\infty\}.\]For
$H \subseteq G_\Lambda$, and $X \subseteq \Lambda^\infty$, denote
$H|_X=XHX=\{(x,l,y) \in H \, : \, x, y \in X\}$.

Using the fact that $f^\infty = f_b^r f^\infty$ and the factorization
conditions, we find that for all $s,t \in \mathbb{N}$,
\begin{equation}\label{v reduction}
e_b^se_r^t \alpha_b f^\infty=e_b^{s+t}\alpha_r f^\infty=e_b^{s+t}\alpha_b f^\infty.
\end{equation}
Similar calculations elsewhere on the graph yield
\begin{eqnarray*}
  X_v& =& \{ e_b^n \alpha_b f^\infty: n\in \mathbf{N} \}\cup \{f^\infty\}  \\
  X_w &=
        & \{e_b^n \beta_b x: x\in w\Lambda^\infty, \, n\in \mathbf{N}\}\cup w\Lambda^\infty 
\end{eqnarray*}

The Cartan envelope will be described using the subgraphs $\Lambda_V$
and $\Lambda_W$ induced, respectively, by vertex sets
$\Lambda^0_V:=\{u,v\}$ and $\Lambda^0_W:=\{u,w\}$. Thus
$\Lambda^{\infty}_V=X_v \cup \{e^\infty\}$ and
$\Lambda^\infty_W=X_w \cup \{e^\infty\}$.

\begin{lemma}\label{iso Gv}
  Let $\Lambda_V$ be the subgraph of $\Lambda$ induced by the vertex set $\{u,v\}.$
  \begin{enumerate} \item For $k,\ell, k', \ell' \in \mathbb{N}$, the
    set
    \[ Z_{G_{\Lambda_V}}(e_b^k e_r^\ell, e_b^{k'} e_r^{\ell'})
      \subseteq \Iso(G_{\Lambda_V})\quad\text{ iff }\quad
      k-k'=-(\ell-\ell').\] Thus
    $\Iso(G_{\Lambda_V})^\circ|_{\{e^\infty\}}=\{(e^\infty, (m, -m),
    e^\infty): m\in \bbZ\} \cong \{e^\infty\} \times \mathbb{Z}$.
 \item
$\Iso(G_{\Lambda_V})^\circ|_{X_v}=\Iso(G_{\Lambda})^\circ|_{X_v}\cong
X_v\times \bbZ^2$.
 \end{enumerate}
\end{lemma}

\begin{proof} By definition, for any $k, k', \ell, \ell',$
  $Z_{G_{V}}(e_b^k e_r^\ell, e_b^{k'} e_r^{\ell'})|_{\{e^\infty\}}
  \subseteq \Iso(G_{\Lambda_V})$. Using (\ref{v reduction}), we have
\begin{align*} Z_{G_{\Lambda_{V}}}(e_b^k e_r^\ell, e_b^{k'} e_r^{\ell'})|_{X_v}&= \{(e_b^k e_r^\ell e_b^n \alpha_b f^\infty, (k-k', \ell-\ell'), e_b^{k'} e_r^{\ell'} e_b^n \alpha_b f^\infty): n\in \mathbf{N}\}\\
&= \{(e_b^{k +\ell+n}\alpha_b f^\infty, (k-k', \ell-\ell'), e_b^{k'+\ell'+n} \alpha_b f^\infty): n\in \mathbf{N}\}.
\end{align*} Noting that $e_b^{k +\ell+n}\alpha_b f^\infty=e_b^{k'+\ell'+n} \alpha_b f^\infty$  if and only if  $k-k'=-(\ell-\ell')$ gives the result.

For (b), by the definition of the topology on $G_\Lambda$, $X_v$ is
discrete. Thus every element of $\Iso(G_\Lambda)^\circ|_{X_v}$ is in
the interior.  Note that for each $x\in X_v$, $x=\mu f^\infty$ for
some $\mu\in \Lambda v$. Thus for any $(k,\ell),(k',\ell')\in \bbN^2$
we have $\sigma^{d(\mu)+(k,\ell)}(x)=\sigma^{d(\mu)+(k',\ell')}(x)$
and so $(x,(k-k', \ell-\ell'),x)\in \Iso(G_\Lambda)$ and so
$xG_\Lambda x\cong \bbZ^2$ and
$\Iso(G_\Lambda)^\circ|_{X_v} \cong \Lambda_v^\infty\times \bbZ^2$.
\end{proof}

\begin{lemma} \label{fibres}  Denote by 
  $\Lambda_V, \Lambda_W$
 the subgraphs of $\Lambda$ induced by the vertex sets $\{u,v\}, \{u, w\}$
respectively. Then
\begin{enumerate}
\item  $\Iso(G_{\Lambda_V})^\circ=\Iso(G_{\Lambda_V})^\circ|_{\{e^\infty\}}
  \cup \Iso(G_{\Lambda_V})^\circ|_{X_v}=
  \overline{\Iso(G_\Lambda)|_{X_v}}$;
\item $\Iso(G_{\Lambda_W})^\circ= \Lambda^\infty_W$.
\end{enumerate}
\end{lemma}

\begin{proof}
  Part~(a) follows from Lemma~\ref{iso Gv}.  For (b), let $x$ be any
  aperiodic path with $r(x)=w$ (e.g.,
  $x=(g_rg_b)(h_rh_b)(g_rg_b)^2(h_rh_b)(g_rg_b)^3 \cdots$). For any
  $\mu, \nu, \xi \in \Lambda_W$ with $s(\xi)=w$ and
  $r(\xi)=s(\mu)=s(\nu)$, $\mu \xi x = \nu \xi x$ if and only if
  $\mu=\nu$. Thus $Z_{G_W}(\mu, \nu) \subseteq \Iso(G_W)$ only if
  $\mu=\nu$.
\end{proof}

\begin{remark}For any open subgroupoid $H$ of an \'etale groupoid $G$,
  the inclusion $C_c(H)\hookrightarrow C_c(G)$ given by extension of
  functions by $0$, extends to an inclusion of
  $\cstarred(H)\hookrightarrow \cstarred(G)$ \cite[Lemma~2.7]{BFPR21}.
  We will often identify $\cstarred(H)$ with its image under this
  inclusion. In particular, we will identify
  $\cstarred(\Iso(G_{\Lambda})|_{X_v})$ with its image in
  $\cstarred(G_\Lambda)$ in Lemma~\ref{lambda ideal intersection}
  below.
\end{remark}

\begin{remark}
  For any \'etale groupoid (such as
  $G_\Lambda, G_{\Lambda_V}, G_{\Lambda_W}$), any open subgroupoid $H$
  is also \'etale.  Using \cite[Proposition~II.4.2]{ren80} we can
  consider any $a\in \cstarred(H)$ as a continuous function on $H$.
  We will often treat elements of $\cstarred(H)$ as continuous
  functions on $H$ in the sequel without further comment.
\end{remark}

\begin{lemma} \label{lambda ideal intersection}
The inclusion  $(\cstarred(\Iso(G_{\Lambda_V})^\circ,
\cstarred(\Iso(G_\Lambda)|_{X_v})$ has the ideal intersection
property.  
\end{lemma}

\begin{proof}
  For notational convenience, given $x\in \Lambda^\infty$,
  let \[S_x=\Iso(G_\Lambda)|_{\{x\}}.\] By Lemmas~\ref{iso Gv} and
  \ref{fibres}, $\Iso(G_{\Lambda_V})^\circ$ is an abelian group bundle
  over $\{e^\infty\}\cup X_v$, where the fibres over $X_v$ are
  isomorphic to $\bbZ^2$ and the fibre over $\{e^\infty\}$ is
  isomorphic to $\bbZ$.  By Lemma~\ref{dual 
    bundle}, \begin{equation*} \begin{split}  
      \cstarred(\Iso(G_{\Lambda_V})^\circ) = &C_0(\{(\xi,x): x\in  
      \Lambda^\infty_V, \ \xi \in \widehat{S_x}\}) \\ =  
      &C_0((\widehat{\mathbb{Z}^2}\times X_v) \cup  
      (\widehat{\mathbb{Z}} \times  
      \{e^\infty\})):\end{split} \end{equation*} the topology on
  $\{(\xi,x): x\in   \Lambda^\infty_V, \ \xi \in \widehat{S_x}\}$ 
   is given by $(\xi_i, x_i) \rightarrow (\xi, x)$ if and only if 
   \begin{itemize}   \item $x_i \rightarrow x$,    and  
     \item if
    $\gamma_i \rightarrow \gamma \in    
    \Iso(G_{\Lambda_V})^\circ$ with
    $\gamma_i \in     \Iso(G_{\Lambda_{V}})|_{\{x_i\}}$ and
    $\gamma \in     \Iso(G_{\Lambda_V})|_{\{x\}}$, then
    $\xi(\gamma_i) \rightarrow     \xi(\gamma)$.   
     \end{itemize}

Now suppose $I$ is an ideal in $\caop(\Iso(G_{\Lambda_V})^\circ)$
and $a\in I \setminus \{0\}$. We have \[\supp(a) \subseteq\{(\xi, x) \, : \, x \in \Lambda^\infty_V, \, \xi \in \widehat{S_x}\}.\]

We first show that there is an $\eta \in \widehat{\mathbb{Z}^2}$ and
$x \in X_v$ with $a(\eta, x) \neq 0$.  By way of contradiction suppose
not. Then since $a$ is nonzero, there is a
$\xi \in \hat{\mathbb{Z}}$ with $a(\xi, e^\infty) \neq 0$. Let
$(x_i)_{i \in \mathbb{N}}$ be a sequence in $X_v$ that converges to
$e^\infty$ and define $\xi_i=\xi \otimes 1$. We claim that
$(\xi_i, x_i) \rightarrow (\xi, e^\infty)$. Let
$(\gamma_i)_{i \in I}$ be a net converging to some $\gamma$ in
$\Iso(G_{\Lambda_V})^\circ$ where
$\gamma_i \in \Iso(G_{\Lambda_V})^\circ|_{\{x_i\}}$ and
$\gamma \in \Iso(G_{\Lambda_V})^\circ|_{\{e^\infty\}}$. By
Lemma~\ref{iso Gv}, there exists $m\in \bbZ$ such that
$\gamma=(e^\infty, (m,-m), e^\infty)$. Since $\gamma_i\to \gamma$ we
must have $\gamma_i \in Z(e_b^m, e_r^m)$ eventually, i.e.,
$\gamma_i = (x_i, (m,-m), x_i)$. By the definition of $\xi_i$, we
have $\xi_i(\gamma_i)=\xi(\gamma)$ for such $i$. By \ref{dual bundle}
and the continuity of $a$ we have $a(\xi_i,x_i)\neq 0 $ eventually, a
contradiction. 

Fix an $\eta \in \widehat{\mathbb{Z}^2}$ and $x \in X_v$ with $a(\eta, x) \neq 0$.  Let $\psi \in C_c(X_v)$ with $\psi(x)=1$. Then $\psi a \neq 0$ and $\psi a \in I \cap \cstar(\Iso(G_\Lambda)|_{X_v})$ giving the result. \qedhere
\end{proof}

\begin{lemma}\label{lambda cartan}
Define \[A_1=\cstarred(G_{\Lambda_V})\oplus \cstarred(G_{\Lambda_W})\quad \text{ and } \quad D_1=\cstarred(\Iso(G_{\Lambda_V})^\circ)\oplus \cstarred(\Iso(G_{\Lambda_W})^\circ).\]
Then $(A_1, D_1)$ is Cartan, where the conditional expectation $\Delta: A_1\to D_1 $ is induced by restriction of functions.
\end{lemma}
\begin{proof}
To show that the inclusion $\cstarred(\Iso(G_{\Lambda_V})^\circ)\oplus \cstarred(\Iso(G_{\Lambda_W})^\circ)\subseteq \cstarred(G_{\Lambda_V})\oplus \cstarred(G_{\Lambda_W})$ is Cartan we show each direct summand is Cartan. 

Note that $G_{\Lambda_W}$ is topologically principal as $\Lambda_W $ is aperiodic, and thus $C_0(\Lambda_W^\infty)$ is Cartan in $\cstarred(G_{\Lambda_W})$.  Recall from Lemma~\ref{fibres} that $\Iso(G_{\Lambda_V})^\circ$ is closed, so by \cite[Corollary 4.5]{BrownNagyReznikoffSimsWilliamsCaSuC*AlHaEtGr}  $\cstarred(\Iso(G_{\Lambda_V})^\circ)$ is Cartan in $\cstarred(G_{\Lambda_V})$. In both cases the conditional expectation is given by restriction of functions. Thus we get that the direct sum $\cstarred(\Iso(G_{\Lambda_V})^\circ)\oplus \cstarred(\Iso(G_{\Lambda_W})^\circ)\subseteq \cstarred(G_{\Lambda_V})\oplus \cstarred(G_{\Lambda_W})$ is Cartan.
\end{proof}

\begin{remark}\label{rmk: quotients}
For $F\in \{V,W\}$ and $\gamma$ in $G_\Lambda$, we have $s_{G_\Lambda}(\gamma)\in \Lambda_F^\infty$ if and only if $r_{G_\Lambda}(\gamma)\in \Lambda^\infty_F$.  Moreover, as $G_\Lambda$ is amenable, we have
\[
\cstarred(G_{\Lambda_F})\cong \cstarred(G_\Lambda)/\cstarred(G_{\Lambda}|_{\Lambda^\infty-\Lambda^\infty_F})
\]
(see, for example, \cite[Theorem~5.1]{WilliamsToKiGrC*Al}).
We define $q_F: \cstarred(G_\Lambda)\to \cstarred(G_{\Lambda_F})$ to be the resulting quotient map.
\end{remark}
\begin{proposition}\label{lambda envelope}
  Let \[A_1=\cstarred(G_{\Lambda_V})\oplus \cstarred(G_{\Lambda_W})\quad \text{ and } \quad D_1=\cstarred(\Iso(G_{\Lambda_V})^\circ)\oplus \cstarred(\Iso(G_{\Lambda_W})^\circ).\]
Then $(A_1, D_1)$ is the Cartan envelope for $(\cstarred(G_\Lambda), \cstarred(\Iso(G_\Lambda)^\circ))$.
\end{proposition}\begin{proof}
  To prove this proposition we use the uniqueness statement of the
  Cartan envelope \cite[Theorem~5.31]{Pit21}. Since we showed that
  $(A_1,D_1)$ is Cartan with conditional expectation $\Delta$, we must
  show:
\begin{itemize}
\item there is a regular $*$-monomorphism
  $$\alpha: (\cstarred(G_\Lambda), \cstarred(\Iso(G_\Lambda)^\circ))
  \to (A_1, D_1),$$
\item $\alpha(\cstarred(\Iso(G_\Lambda)^\circ))$ has the ideal
  intersection property in $D_1$, 
\item $\alpha(\cstarred(\Lambda))\cup D_1$ generates $A_1$, and 
\item $D_1 = \caop(\Delta(\alpha(\cstarred(G_\Lambda)))$.
\end{itemize}

Define
\[
\alpha:=q_V\oplus q_W: \cstarred(G_\Lambda) \to \cstarred(G_{\Lambda_V})\oplus \cstarred(G_{\Lambda_W})
\]
(see Remark~\ref{rmk: quotients}).  We show that
$\alpha$ is a regular $*$-monomorphism. That $\alpha$ is a
$*$-homomorphism follows from the definition. To see that $\alpha$ is
injective we compute its kernel. The kernel of $q_F$ is
$\cstarred(G_\Lambda|_{\Lambda^\infty-\Lambda_F^\infty})$ so that
$\ker(q_V)\cap\ker(q_W)=\{0\}$, hence $\alpha$ is injective.

To see that $\alpha$ is regular, it suffices to see that $q_F$ is
regular for $F\in \{V,W\}$. That $q_W$ is regular follows since
$\Iso(G_\Lambda)^\circ|_{\Lambda_W^\infty}=\Iso(G_{\Lambda_W})^\circ$.

We now show that $q_V$ is regular.  Since
$\cstarred(\Iso(G_{\Lambda_V})^\circ)$ is generated by the indicator
functions, $1_{Z_{G_{\Lambda_V}}(\mu,\nu)}$, for
$Z_{G_{\Lambda_V}}(\mu,\nu)\subseteq \Iso(G_{\Lambda_V})^\circ$, it
suffices to see that
\[q_V(n^*)1_{Z_{G_{\Lambda_V}(\mu,\nu)}}q_V(n)\in
\cstarred(\Iso(G_{\Lambda_V})^\circ)\]   for any $n\in
N(C^*_r(G_{\Lambda}) , C^*_r(\Iso(G_{\Lambda})^\circ)$ and 
$Z_{G_{\Lambda_V}}(\mu,\nu)\subseteq \Iso(G_{\Lambda_V})^\circ$.  For
this it suffices to show that
\[\supp
  \left(q_V(n^*)1_{Z_{G_{\Lambda_V}(\mu,\nu)}}q_V(n)\right)\subseteq\Iso(G_{\Lambda_V})^\circ.\]
Now if $e^\infty \notin r_{G_{\Lambda_V}}(Z_{G_{\Lambda_V}}(\mu,\nu))$
we have
$Z_{G_{\Lambda_V}}(\mu,\nu)=Z_{G_{\Lambda}}(\mu,\nu)\subseteq
\Iso(G_{\Lambda})^\circ$ and since $n$ is a normalizer we
have
\[q_V(n^*)1_{Z_{G_{\Lambda_V}}(\mu,\nu)}q_V(n)=n^*1_{Z_{G_{\Lambda}(\mu,\nu)}}n\in
  \cstarred(\Iso(G_{\Lambda})^\circ).\] Thus
$\supp (n^*1_{Z_{G_{\Lambda}}(\mu,\nu)}n)\in
\Iso(G_{\Lambda})^\circ\cap G|_{X_v} \subseteq
\Iso(G_{\Lambda_V})^\circ$ as desired.  So it remains to show that
$\supp
\left(q_V(n^*)1_{Z_{G_{\Lambda_V}}(\mu,\nu)}q_V(n)\right)\subseteq\Iso(G_{\Lambda_V})^\circ$
for $e^\infty \in r_{G_{\Lambda_V}}(Z_{G_{\Lambda_V}}(\mu,\nu))$.
 Since we are assuming
$e^\infty \in r_{G_{\Lambda_V}}(Z_{G_{\Lambda_V}}(\mu,\nu))\subseteq
\Iso(G_{\Lambda_V})^\circ$, by Lemma~\ref{iso Gv}, 
$\mu$ and $\nu$ must satisfy $d(\mu)-d(\nu)=(m,-m)$ for some $m \in \mathbb{Z}$.
\black
A
computation (details are provided in~\ref{ConvolutionComputation}
below) now shows that 
\begin{equation}\label{CCompA}
q_V(n^*)1_{Z_{G_{\Lambda_V}}(\mu, \nu)}q_V(n)(x,(s,t),y)=n^*n(x,(s-m,t+m),y).
\end{equation}
As $n$ is a normalizer, $n^*n\in \cstarred(\Iso(G_\Lambda)^\circ)$, so
that we must have
\[q_V(n^*)1_{Z_{G_{\Lambda_V}}(\mu, \nu)}q_V(n)(x,(s,t),y)\neq 0 \implies x=y.\] Moreover,
if $x=y=e^\infty$, $n^*n(e^\infty,(s-m,t+m),e^\infty)\neq 0$ only if
$(s-m, t+m)=(0,0)$ that is $(s,t)=(m, -m)$. Thus we get
$\supp q_V(n^*)1_{Z_{G_{\Lambda_V}}(\mu, \nu)}q_V(n^*)\subseteq \Iso(G_{\Lambda_V})^\circ$ and
thus
$\supp q_V(n^*)1_{Z_{G_{\Lambda_V}}(\mu, \nu)}q_V(n)\in \cstarred(\Iso(G_{\Lambda_V})^\circ)$; 
hence $q_V(n)$ is a normalizer. This completes the proof of regularity
of $\alpha$.

By Lemma~\ref{fibres},
\[\cstarred(\Iso(G_{\Lambda_W})^\circ)\cong C_0( \Lambda_W^\infty)\cong
\cstarred(\Iso(G_\Lambda)|_{X_w}).\]  Therefore,
$(D_1, \,\alpha(\cstarred(\Iso(G_\Lambda)^\circ )))$ has the ideal
intersection property if $\cstarred(\Iso(G_\Lambda|_{X_v}))$ has the
ideal intersection property in $\cstarred(\Iso(G_{\Lambda_V})^\circ)$;
this follows since $\Iso(G_\Lambda|_{X_v})$ is dense in
$(\Iso(G_{\Lambda_V}))^\circ$; see the proof of Lemma~\ref{lambda
  ideal intersection}.

That
$\alpha(\cstarred(G_{\Lambda}))\cup
\cstarred(\Iso(G_{\Lambda_V})^\circ)\oplus
\cstarred(\Iso(G_{\Lambda_W})^\circ)$ generates
$\cstarred(G_{\Lambda_V})\oplus \cstarred(G_{\Lambda_W})$ follows from
the fact that $q_F$ is surjective and
$\cstarred(G_{\Lambda_V})\oplus \cstarred(G_{\Lambda_W})$ contains
both $(1, 0)$ and $(0, 1)$.

To see that $D_1 = \caop(\Delta(\alpha(\cstarred(G_\Lambda))))$, it
suffices to show that $a\oplus 0$ and $0\oplus b$ are in
$\caop(\Delta(\alpha(\cstarred(G_\Lambda))))$ for any
$a\in \cstarred(\Iso(G_{\Lambda_V})^\circ)$ and
$b\in
C_0(\Lambda^\infty_W)=\cstarred(\Iso(G_{\Lambda_W})^\circ)$. Noting
that functions supported on sets of the form
$Z_{G_{\Lambda_V}}(\mu,\nu)$ generate
$\cstarred(\Iso(G_{\Lambda_V})^\circ)$, we may further assume that
$\supp b \subseteq U$, where $U=Z_{G_{\Lambda_V}}(\mu,\nu)$ for some
$\mu,\nu\in \Lambda_V$.

To begin, note that $q_V$ and $q_W$ are both surjective by definition
and thus there exist $c,d$ such that
$a\oplus c\in \Delta(\alpha(\cstar(G_\Lambda))) $ and
$d\oplus b\in \Delta(\alpha(\cstar(G_\Lambda)))$.

We will show
$1_{U} \oplus 0 \in \Delta(\alpha(\cstarred(G_\Lambda)))$, where $1_U$
is the characteristic function of $Z_{G_V}(\mu, \nu)$.  Since
$\Iso(G_{\Lambda_V})^\circ = \overline{\Iso(G_\Lambda)|_{X_v}}$ is
closed in $G_\Lambda$, we can use Tietze's extension theorem to extend
$1_{U}$ to a continuous compactly supported function $\omega$ on
$G_\Lambda$ such that $\omega|_{G_{\Lambda_V}}=1_{U}$. By subtracting
off $E(\omega)$ where $E: \cstar(G_\Lambda)\to C_0(G_\Lambda^{(0)})$
is the usual conditional expectation generated by restriction of
functions we can assume that
$\supp(\omega)\cap G_\Lambda^{(0)}=\emptyset.$ Thus
$\Delta(\phi_W(\omega))=0$, and so
$\Delta(\alpha(\omega))=1_{U} \oplus 0$ as desired.  Thus
$1_{r(U)}\oplus 0= (1_{U}\oplus 0)(1_{U}\oplus 0)^*$ is in
$\cstar(\Delta(\alpha(\cstarred(G_\Lambda))))$ and therefore
$a\oplus 0= (1_{r(U)}\oplus 0)(a\oplus c)$ is in
$\caop(\Delta(\alpha(\cstarred(G_\Lambda))))$ as well.

Since
$\cstarred(\Iso(G_{\Lambda_V})^\circ) \oplus 0 \subseteq
\cstarred(\Delta(\alpha(\cstarred(G_\Lambda)))$, in particular
$d\oplus 0 \in \caop(\Delta(\alpha(\cstarred(G_\Lambda)))$ and so
$0\oplus b = d\oplus b - d\oplus 0\in
\caop(\Delta(\alpha(\cstarred(G_\Lambda)))$ completing the claim.

So now \cite[Theorem~5.31]{Pit21} gives the result.
\end{proof}

For a groupoid $G$ and $\gamma\in G$ and $H$ an abelian subgroup of
$G|_{s(\gamma)}$, $\gamma H \gamma\inverse$ is an abelian subgroup of
$G|_{ r(\gamma)}$ isomorphic to $H$. For $\xi\in \hat H$ the map
$\gamma\cdot \xi: a\mapsto \xi (\gamma\inverse a\gamma)$ is an element
of $\widehat{\gamma H \gamma\inverse}$. This defines an action of $G$
on $\widehat{\Iso (G)^\circ}$.  As in the discussion on page 16 of
\cite{BFPR24}, an ideal of $C_r^*(\Iso (G)^\circ)$ is invariant in the
sense of Definition~\ref{def: N inv ideal} if and only if it is of the
form $C_0(U)$ for an open subset $\widehat{\Iso (G)^\circ}$ invariant
for this action.

We now compute the invariant regular open sets of the spectrum of $D_1$
in order to get the regular ideals of $\cstar(\Lambda)$.

\begin{proposition}
\begin{enumerate}
\item \label{lambda reg open 1}Regular open invariant subsets of the
  spectrum of $\cstarred(\Iso(G_{\Lambda_V})^\circ|_{X_v})$ are of the
  form $X_v\oplus U$ for some regular open subset $U$ of $\bbT^2$.
\item\label{lambda reg open 2} Regular open invariant subsets of the
  spectrum of $\cstarred(\Iso(G_{\Lambda_V})^\circ)$ are of the form
  $(X_v \oplus U)\cup ( \{e^\infty\}\oplus \pi_1(U)\pi_2(U)\inverse)$
  where $\pi_i:\bbT^2\to \bbT$ is the projection onto the $i$th factor
  and $U$ is some regular open subset of $\bbT^2$.
\item Regular open invariant subsets of $\widehat{D_1}$ are of the
  form
  $(X_v\oplus U)\cup ( \{e^\infty\}\oplus \pi_1(U)\pi_2(U)\inverse)$
  or
  $(X_v\oplus U)\cup ( \{e^\infty\}\oplus
  \pi_1(U)\pi_2(U)\inverse)\cup \Lambda^\infty_W$ where
  $\pi_i:\bbT^2\to \bbT$ is the projection onto the $i$th summand and
  $U$ is some regular open subset of $\bbT^2$.
\end{enumerate}
\end{proposition}

\begin{proof}
  For notational convenience given $x\in X_v$,
  put \[S_x=\Iso(G_{\Lambda_V})^\circ|_{\{x\}}.\] Now
  $S_x\cong \bbZ^2$.  So
  \[\widehat{S_x} \cong \widehat{\bbZ^2}\cong \bbT \times \bbT.\] So
  we identify $\widehat{S_x}$ with $\{x\} \oplus (\bbT\times
  \bbT)$. Under this identification
  \[(x,z_1\otimes z_2): (x,(m,n),x)\mapsto z_1^m z_2^n.\]

  Let $\Omega$ be the spectrum of
  $\cstarred(\Iso(G_{\Lambda_V})^\circ|_{X_v})$, then $\Omega$ is a
  bundle over $X_v$ whose fibres are given by $\widehat{S_x}$.  The
  action of $G_{\Lambda_V}$ on $\Omega$ is given by
  \begin{equation*} \begin{split} [(y,(p,q),x)\cdot(x,z_1\otimes z_2)](y,(m,n), y)&=(x,z_1\otimes z_2)((y,(p,q),x)\inverse (y,(m,n), y)(y,(p,q),x))\\
      &=(x,z_1\otimes z_2)(x,(-p,-q)+(m,n)+(p,q),x) \\ &=(y,z_1
      \otimes z_2)(y,(m,n),y).\end{split}\end{equation*} That is,
  $(y,(p,q),x)\cdot(x,z_1\otimes z_2)=(y,z_1\otimes z_2).$ Thus for an
  open set in $ \Omega$ to be invariant, it must be of the form
  $\Lambda_v^\infty\oplus U$ where $U$ is an open subset of
  $\bbT\times \bbT$.  Moreover since $X_v$ is discrete, $X_v \oplus U$
  is regular in $\Omega$ if and only if $U$ is regular in
  $\bbT\times \bbT$.

  For (b) we need to extend \eqref{lambda reg open 1} to $e^\infty$.
  By Lemma~\ref{lambda ideal intersection},
  $(e^\infty, (m,-m), e^\infty)\mapsto m$ defines an isomorphism
  $S_{e^\infty} \to \bbZ$.  We can characterize
  $\widehat{S_{e^\infty}} \cong \bbT$ by
  $z: (e^\infty, (m,-m), e^\infty)\mapsto z^m$. Now since
  $(e_b^n \alpha_b f^\infty, (m,n), e_b^n \alpha_b f^\infty)\to
  (e^\infty, (m,n), e^\infty)$ if and only if $n=-m$, we see that
  \[ (e_b^n \alpha_b f^\infty ,\zeta_n\otimes \omega_n) \to (e^\infty,
    z) \quad \text{iff} \quad \zeta_n\omega_n \inverse \to z \in
    \bbT.\] That is, the limit points of $\Lambda_v^\infty\oplus U$ in
  $\widehat{S_{e^\infty}}$ are the elements of
  $\overline{\pi_1(U)\pi_2(U)\inverse}$ where $\pi_i$ is the
  projection onto the $i$th component.  So the invariant regular open
  sets are characterized by
  $(\Lambda_v^\infty \oplus U)\cup \{e^\infty\}\oplus (
  \overline{\pi_1(U)\pi_2(U)\inverse})^\circ$, for $U$ a regular open
  set in $\bbT\times \bbT$.

  For the last statement, note that for $\mu\in \Lambda w$, the map
  $Z(w,w)\to Z(\mu,\mu)$ given by
  $(y,(0,0), y)\mapsto (\mu y, (0,0), \mu y)$ is a homeomorphism.
  Thus $G_{\Lambda_W}$ is minimal.  Moreover $G_{\Lambda_W}$ is
  aperiodic, thus $\Iso(G_{\Lambda_W})^\circ\cong \Lambda_W^\infty$
  and the only open invariant subsets of $\Lambda_W^\infty$ are
  $\emptyset$ and $\Lambda_W^\infty. $ Combining this with part
  \eqref{lambda reg open 2} gives the result.
\end{proof}

At this point we can use Theorem~\ref{thm: all latice isomorphisms} to
compute the regular ideals of $\cstarred(\Iso(G_\Lambda)^\circ)$,
$\cstarred(G_\Lambda)$ and $A_1$.  For example, the regular ideals of
$\cstarred(\Iso(G_\Lambda)^\circ)$ are the image under
$\alpha\inverse$ of regular ideals of $D_1$.  To compute this, note
that
\[\phi_V({\cstarred(\Iso(G_\Lambda)^\circ)})=\cstarred(\{e^\infty\}\cup
  \Iso(G_{\Lambda_V}|_{X_v})^\circ),\] so that \begin{eqnarray*}
  \alpha\inverse(C_0(X_v\oplus U)\cup \{e^\infty\} \oplus
  \overline{\pi_1(U)\pi_2(U)})\oplus \{0\})&=&C_0((X_v\oplus U)\cup
  \{e^\infty\}), \text{ and} \\\alpha\inverse(C_0((X_v\oplus U)\cup
  \{e^\infty\})\oplus C_0(\Lambda_W^\infty))&=&C_0((X_v\oplus U)\cup
  \Lambda_W^\infty). \end{eqnarray*}
\end{example}

\begin{dremark}{Computation} \label{ConvolutionComputation} For the
  interested reader, we conclude by giving the details of the
  computation establishing~\eqref{CCompA} above.  We assume the reader
  is familiar with the convolution formula which determines
  multiplication in the \cstar-algebra of an \'etale groupoid. Let
  $U=Z_{G_{\Lambda_V}}(\mu, \nu)  \subseteq \Iso(G_{\Lambda_V})^\circ$ with $d(\mu)-d(\nu)=(m,-m)$.  Then
 \begin{align}q_V(n^*)&1_{U}
    q_V(n)(x,(s,t),y)\notag \\
    &=\sum \left(n^*1_{U}\right)(x, (p,q), z) \, n\left((x, (p,q), z)^{-1}(x,(s,t),y)\right) \notag\\
&=\sum \sum n^*(x,(i,j),w)\, 1_{U}\left((x,(i,j),w)^{-1}(x, (p,q), z)\right) \, n(z,(s-p,t-q),y) \notag\\
&=\sum \sum n^*(x,(i,j),w)\, 1_{U}(w, (p-i,q-j), z)\,  n(z,(s-p,t-q),y). \notag\\\label{ConvCompB}
\intertext{Since $U   \subseteq \Iso(G_\Lambda)$ and $d(\mu)-d(\nu)=(m,-m)$, we have $(w, (p-i,q-j), z) \in U$ if and only if     $(w, (p-i,q-j), z)=(z, (m,-m), z)$.  This gives  $w=z$ and
  $(i,j)=(p-m, q+m)$. Hence continuing  the computation from the end
  of \eqref{ConvCompB}, we obtain}
&=\sum n^*(x,(p-m,q+m),z) \,  n(z,(s-p,t-q),y)\notag\\
&=\sum n^*(x,(p,q),z) \, n(z,(s-p-m,t-q+m),y),\notag\\
&=\sum n^*(x,(p,q),z) \, n\left((x, (p,q), z)^{-1}(x,(s-m,t+m),y)\right)
=n^*n(x,(s-m,t+m),y).\notag
\end{align}
\end{dremark}

\textit{Acknowledgement:} This work was supported by
 the American Institute of Mathematics SQuaREs Program.



\end{document}